\theoremstyle{thmstyleone}%
\newtheorem{theorem}{Theorem}[section]
\newtheorem{proposition}{Proposition}[section]%
\theoremstyle{thmstyletwo}%
\newtheorem{example}{Example}[section]%
\newtheorem{remark}{Remark}[section]%
\theoremstyle{thmstyleone}%
\newtheorem{definition}{Definition}[section]%
\newtheorem{assumption}{Assumption}[section]%
\newtheorem{lemma}{Lemma}[section]%
\newtheorem{corollary}{Corollary}[section]%
\begin{document}

\title[Article Title]{An inexact variable metric proximal linearization method for composite optimization on manifolds}


\author[1]{\fnm{Hao} \sur{He}}\email{mahehao@mail.scut.edu.cn}

\author*[1]{\fnm{Ruyu} \sur{Liu}}\email{maruyuliu@mail.scut.edu.cn}

\author[2]{\fnm{Yitian} \sur{Qian}}\email{yitian.qian@polyu.edu.hk}

\author[1]{\fnm{Shaohua} \sur{Pan}}\email{shhpan@scut.edu.cn}

\affil[1]{\orgdiv{School of Mathematics}, \orgname{South China University of Technology}, \orgaddress{\street{Wushan}, \city{Guangzhou},  \country{China}}}

\affil[2]{\orgdiv{Department of Data Science and Artificial Intelligence}, \orgname{The Hong Kong Polytechnic University}, \orgaddress{\city{Hongkong}, \country{China}}}


\abstract{This paper concerns the minimization of the composition of a nonsmooth convex function and a $\mathcal{C}^{1,1}$ mapping $F$ over a $\mathcal{C}^2$-smooth embedded closed submanifold $\mathcal{M}$. For this class of nonconvex and nonsmooth problems, we propose an inexact variable metric proximal linearization method by leveraging its composite structure and the retraction and first-order information of $\mathcal{M}$, which at each iteration seeks an inexact solution to a subspace constrained strongly convex problem by a practical inexactness criterion. Under the boundedness assumption on the iterate sequence, we establish the $O(\epsilon^{-3})$ oracle complexity with a dual fast gradient method as the inner solver, and prove that any cluster point of the iterate sequence is a stationary point. If in addition the constructed potential function has the Kurdyka-Lojasiewicz (KL) property on the set of cluster points, the iterate sequence  converges to a stationary point, and if the potential function has the KL property of exponent $q\in[\frac{1}{2},1)$, the local convergence rate is characterized. We also provide a condition only  involving the original data to identify the KL property of the potential function with an exponent $q\in[0,1)$. Numerical comparisons with the existing methods validate the efficiency of the proposed method.}

\keywords{Composite optimization, $\mathcal{C}^2$-smooth embedded submanifolds, Proximal linearization, Iteration complexity, Full convergence, KL property}




\maketitle

\section{Introduction}\label{sec:Intro}

Let $\mathbb{X}$ and $\mathbb{Z}$ be finite-dimensional real vector spaces endowed with an inner product $\langle\cdot,\cdot\rangle$ and its induced norm $\|\cdot\|$, and let $\mathcal{M}$ be a $\mathcal{C}^2$-smooth embedded closed submanifold of $\mathbb{X}$. We are interested in the composite optimization problem over the manifold $\mathcal{M}$
\begin{equation}\label{prob}
\min_{x\in\mathcal{M}}\Theta(x):=f(x)+\vartheta(F(x)),
\end{equation}
where $f:\mathbb{X}\to\overline{\mathbb{R}}:=(-\infty,\infty]$, $\vartheta:\mathbb{Z}\to\mathbb{R}$ and $F:\mathbb{X}\to\mathbb{Z}$ satisfy Assumption \ref{ass0}. 
\begin{assumption}\label{ass0}
 {\bf(i)} $f$ is a proper and lower semicontiuous (lsc) function that is $\mathcal{C}^{1,1}$ on an open convex set $\mathcal{O}\supset\mathcal{M}$ (i.e., differentiable and gradient $\nabla\!f$ is locally Lipschitz on $\mathcal{O}$);
	
 \noindent
 {\bf(ii)} $\vartheta$ is a convex function with a closed-form  proximal mapping;
	
 \noindent
 {\bf(iii)} $F$ is $\mathcal{C}^{1,1}$ on $\mathcal{O}$ (i.e., differentiable and Jacobian $F'(\cdot)$ is locally Lipschitz on $\mathcal{O}$);
	
 \noindent
 {\bf(iv)} $\Theta$ is bounded from below on $\mathcal{M}$, i.e., $\inf_{x\in\mathcal{M}}\Theta(x)>-\infty$.
\end{assumption}

Problem \eqref{prob} is general enough to cover the case that $\vartheta$ is weakly convex on $F(\mathcal{M})$. Indeed, with the weakly convex parameter $\rho$ of $\vartheta$, it can be reformulated as the one with $f(\cdot)\leftarrow f(\cdot)-\frac{\rho}{2}\|F(\cdot)\|^2$ and $\vartheta\leftarrow \vartheta+\frac{\rho}{2}\|\cdot\|^2$. Despite the generality, it often arises from machine learning and scientific computing, as illustrated by the examples below. 
\begin{example}\label{Example1}
 Sparse spectral clustering (see \cite{Lu2016,Park2018}) aims to seek a low-dimensional embedding $X\in\mathbb{R}^{n\times r}$ with $XX^{\top}$ having as many zero entries as possible via the model
\begin{equation}\label{ssc}
 \min_{X\in\mathcal{M}}\,{\rm tr}(X^{\top}AX)+\lambda\,\|XX^{\top}\|_1,
 \end{equation}
 where $\mathcal{M}=\{X\in\mathbb{R}^{n\times r}\,|\, X^{\top}X=I_r\}$ is the Stiefel manifold, $A$ is the normalized Laplacian matrix, $\lambda>0$ is the regularization parameter, and $\|\cdot\|_1$ is the entrywise $\ell_1$-norm of matrices. Obviously, problem \eqref{ssc} is a special case of \eqref{prob} with $\mathbb{X}=\mathbb{R}^{n\times r},\mathbb{Z}=\mathbb{S}^n$ and $f(X)={\rm tr}(X^{\top}AX), \vartheta(Z)=\lambda\|Z\|_1,F(X)=XX^{\top}$ for $X\in\mathbb{X}$ and $Z\in\mathbb{Z}$, where $\mathbb{S}^n$ is the space of all $n\times n$ real symmetric matrices. 
\end{example}
\begin{example}\label{Example2}
 The constrained group sparse principal component analysis (PCA) seeks a low-rank row-sparse embedding $X\!\in\mathbb{R}^{n\times r}$ with some prior information. It is formulated as 
 \begin{equation}\label{rspca}
 \min_{X\in\mathcal{M}}\big\{-{\rm tr}(X^{\top}B^{\top}BX)+\lambda\|X\|_{2,1}\ \ \mathrm{s.t.}\ \ E\circ (X^{\top}B^{\top}BX)=0\big\},
 \end{equation}
 where $\mathcal{M}$ and $\lambda$ are the same as in Example \ref{Example1}, $\|X\|_{2,1}\!:=\sum_{i=1}^n\|X_{i\cdot}\|$ is the row $\ell_{2,1}$-norm of $X$, $E$ is a $r\times r$ matrix with $0$ diagonal and $1$ off-diagonal entries, $B\in\mathbb{R}^{m\times n}$ is a data matrix, and ``$\circ$'' is the Hadamard product. Problem \eqref{rspca} is different from that of \cite{Lu2012} in the regularization term. To cope with its equality constraint, we resort to the $\ell_1$-norm penalty
 \begin{equation}\label{pen-rspca}
 \min_{X\in \mathcal{M}}\,-{\rm tr}(X^{\top}B^{\top}BX)+\lambda\|X\|_{2,1}+\rho\|E\circ(X^{\top}B^{\top}BX)\|_1,
 \end{equation}
 where $\rho>0$ is the penalty parameter. Notice that problem \eqref{pen-rspca} is a special case of \eqref{prob} with $\vartheta(X,Z)=\!\lambda\|X\|_{2,1}+\rho\|Z\|_1$ and $F(X)\!=(X;E\circ(X^{\top}B^{\top}BX))$ for $X\!\in\mathbb{R}^{n\times r}$ and $Z\in\mathbb{S}^r$.
\end{example}
\begin{example}\label{Example3}
 The proper symplectic decomposition is a snapshot-based basis generation method, which finds a symplectic basis matrix $X$ to minimize the projection error of the symplectic projection in the mean over all snapshots. From \cite{Peng2016}, it is characterized as
 \begin{equation}\label{psd-prob}
 \min_{X\in\mathcal{M}}\,\|XX^+A-A\|_F,
 \end{equation} 
 where $\mathcal{M}=\big\{X\in\mathbb{R}^{2n\times 2r}\mid X^{\top}J_{2n}X=J_{2r}\big\}$ with 
 \(
	J_{k}:=\begin{pmatrix}
		0&I_k\\
		-I_k&0
	\end{pmatrix}
\)
 for $k\in\mathbb{N}^*$ is the symplectic Stiefel manifold, $A\in\mathbb{R}^{2n\times 2m}$ is a data matrix, and $X^+:=J_{2r}^{\top}X^{\top}\!J_{2n}$ is the symplectic inverse of $X$. Problem \eqref{psd-prob}  with the square of the Frobenius norm was considered in \cite{Gao2024,Jensen2024}. The main advantage of the square of the Frobenius norm lies in its smoothness. In contrast, the Frobenius norm itself is scale-invariant and then exhibits better robustness than its square. We are interested in seeking a sparse proper symplectic decomposition via 
\begin{equation}\label{reg-psdprob}
\min_{X\in\mathcal{M}}\,\|XX^+A-A\|_F+\lambda\|X\|_1,
\end{equation} 
where $\lambda\ge 0$ is the regularization parameter. Clearly, problem \eqref{reg-psdprob} is a special case of \eqref{prob} with $f\equiv 0,\vartheta(Z,X)=\!\|Z\|_F+\|X\|_1,F(X)=(X;XX^+A-A)$ for $(Z,X)\in\mathbb{R}^{2n\times 2m}\times\mathbb{R}^{2n\times 2r}$. 
\end{example} 
\begin{example}\label{Example4}
 Wang et al. \cite{Wang2025} recently considered sparse PCA under a stochastic setting where the underlying probability distribution of the random parameter is uncertain, and formulate it as a distributionally robust optimization (DRO) model based on a constructive approach to capturing uncertainty in the covariance matrix. The DRO model is reformulated as follows 
 \begin{equation}\label{DRO-spca}
 \min_{X\in\mathcal{M}}\,{\rm tr}((I_n\!-\!XX^{\top})\Sigma_n)+\lambda\|X\|_{1}+\rho_n\|(I_n\!-\!XX^{\top})\Sigma_n^{ \frac{1}{2}}\|_F,
 \end{equation}
 where $\mathcal{M}$ is the same as in Example \ref{Example1}, $\Sigma_n$ is an $n\times n$ empirical covariance matrix, and $\rho_n>0$ is a constant related to $n$. Problem \eqref{DRO-spca}  is a special case of \eqref{prob} with $f(X)={\rm tr}((I_n\!-\!XX^{\top})\Sigma_n)$, $\vartheta(X,Z)=\lambda\|X\|_{1}+2\rho_n\|Z\|_F$ and $F(X)=(X;(I_n\!-\!XX^{\top})\Sigma_n^{\frac{1}{2}})$ for $X\!\in\mathbb{R}^{n\times r}$ and $Z\in\mathbb{S}^n$. 
\end{example}

As is well known, the element-wise $\ell_1$-norm $\|\cdot\|_1$, row $\ell_{2,1}$-norm $\|\cdot\|_{2,1}$ and Frobeninus norm $\|\cdot\|_F$ of matrices all have a closed-form proximal mapping (see, e.g., \cite{Si2024,XiaoLiu2021}), so the function $\vartheta$ in Examples \ref{Example1}-\ref{Example4} satisfies the requirement of Assumption \ref{ass0} (ii). It is worth pointing out that the closed-form proximal mapping of $\vartheta$ is not required by the theoretical analysis in Sections \ref{sec4}-\ref{sec5}, and is just for numerical tests in Section \ref{sec7}. 

The existing algorithms for manifold optimization are mostly proposed for the special cases of problem \eqref{prob}. Next we mainly review the algorithms developed by the composite structure of \eqref{prob} and the retraction and first-order information of manifolds, which are related to the forthcoming one in this work. For those designed by the retraction and first-order information of manifolds for minimizing a general but abstract nonsmooth function, see Zhang et al. \cite{Zhang2024} and Hosseini et al. \cite{Hosseini2018}; for those proposed by using the retraction but second-order information of manifolds, see Si et al. \cite{Si2024}; and for those without using the retraction of manifolds, see Liu et al. \cite{LiuXiao2024,XiaoLiu2021}.
\subsection{Related Works}\label{sec1.1} 

Riemannian proximal gradient (RPG) methods are a class of popular ones based on the retraction and first-order information of manifolds. The first RPG method was proposed by Chen et al. \cite{Chen2020} for problem \eqref{prob} with the Stiefel manifold $\mathcal{M}$ and an identity mapping $F$ (i.e., $F\equiv\mathcal{I}$). At each iteration, it seeks the exact solution $v^k$ of 
\begin{equation}\label{RPG}
\min_{v\in T_{x^k}\mathcal{M}}\,\langle\nabla\!f(x^k), v\rangle+\dfrac{1}{2t}\|v\|^2+\vartheta(x^k\!+v)\ \ {\rm with}\ t>0,
\end{equation}
then finds a step-size $\alpha_k$ with a monotone line search along the direction $v^k$ and retracts $x^k\!+\alpha_kv^k$ onto $\mathcal{M}$ to yield the next iterate, where $T_{x^k}\mathcal{M}$ is the tangent space of $\mathcal{M}$ at $x^k$. Under an assumption a little stronger than Assumption \ref{ass0}, they proved that any cluster point of the iterate sequence is a stationary point, and the RPG method returns an $\epsilon$-stationary point defined by $v^k$ in $O(\epsilon^{-2})$ steps. Later, for \eqref{prob} with a finite dimensional Riemannian manifold $\mathcal{M}$, a continuous nonconvex $\vartheta$ and $F\equiv\mathcal{I}$, Huang and Wei \cite{Huang2022a} developed a RPG method by seeking an exact stationary point $v^k$ of
\begin{equation}\label{RPG1}
\min_{v\in T_{x^k}\mathcal{M}}\,\ell_{x^k}(v):=\langle\nabla\!f(x^k), v\rangle+\frac{\widetilde{L}}{2}\|v\|^2+\vartheta(R_{x^k}(v))
\end{equation}
with $\ell_{x^k}(v^k)\le\ell_{x^k}(0)$, where $R$ is a retraction and $\widetilde{L}>L$ is a constant. Under the $L$-retraction-smoothness of $f$ w.r.t. $R$ and the compactness of a level set of $\Theta$ restricted in $\mathcal{M}$, they obtained the subsequential convergence of the iterate sequence and the iteration complexity $O(\epsilon^{-2})$ for an $\epsilon$-stationary point defined by an exact stationary point of \eqref{RPG1}. If in addition $\Theta$ satisfies the Riemannian KL property, they proved that the iterate sequence converges to a stationary point, and provided the local convergence rate by requiring the Riemannian KL property of exponent.

For the problem considered in \cite{Chen2020}, Wang and Yang \cite{Wang2023,Wang2024} proposed Riemannian proximal quasi-Newton methods by exactly solving \eqref{RPG} with the term $\frac{1}{2t}\|v\|^2$ replaced by a variable metric one $\frac{1}{2}\|v\|_{\mathcal{B}_{k}}^2$ and searching for a step-size $\alpha_k$ along the direction $v^k$ with a nonmonotone line search strategy, where $\mathcal{B}_{k}\!:T_{x^k}\mathcal{M}\to T_{x^k}\mathcal{M}$ is a self-adjoint positive definite (PD) linear operator generated by a damped LBFGS strategy. Under the same restriction on $f$ and $\vartheta$ as in \cite{Chen2020} and the uniformly lower and upper boundedness assumption on $\{\mathcal{B}_{k}\}_{k\in\mathbb{N}}$, they achieved the subsequential convergence of the iterate sequence and the same iteration complexity as in \cite{Chen2020} for an $\epsilon$-stationary point, and if in addition the Riemannian Hessian of $f$ at a cluster point is positive definite, they proved that the iterate sequence converges to it with a linear rate  

The above-mentioned methods all focus on the special case $F\equiv\mathcal{I}$. For problem \eqref{prob} with a linear $F$,  Deng and Peng \cite{deng2023manifold} proposed a Riemannian inexact augmented Lagrangian method (RiALM) where each subproblem is inexactly solved, and proved the subsequential convergence of the iterate sequence under the compactness of $\mathcal{M}$ and an assumption slightly stronger than Assumption \ref{ass0}; Deng et al. \cite{deng2025oracle} later developed a RiALM that achieves an oracle complexity of $O(\epsilon^{-3})$ when each subproblem is solved with a Riemannian gradient descent method. Beck and Rosset \cite{Beck2023} proposed a dynamic smoothing approach (DSGM) by replacing $\vartheta$ with its Moreau envelope and solving the smoothing problem with a Riemannian gradient descent method, and achieved the subsequential convergence of the iterate sequence and the iteration complexity $O(\epsilon^{-3})$ on an $\epsilon$-stationary point; Li et al. \cite{LiMa2024} proposed a Riemannian alternating direction method of multiplier (RADMM) method, and proved that it generates an $\epsilon$-stationary point of the constrained reformulation of \eqref{prob} in $O(\epsilon^{-4})$ steps. The convergence results of \cite{Beck2023,LiMa2024} and \cite{deng2025oracle} require the same restriction on $f$ and $\vartheta$ as in  \cite{Chen2020} as well as the compactness of $\mathcal{M}$. For the sparse spectral clustering (SSC) problem in Example \ref{Example1}, Wang et al. \cite{Wang2022} proposed a manifold proximal linear (ManPL) method by combining the ManPG in \cite{Chen2020} and the proximal linear method in the Euclidean space, and achieved the subsequential convergence and the same iteration complexity as in Chen et al. \cite{Chen2020}. For problem \eqref{prob} itself, Xu et al. \cite{Xu2024} proposed a RiALM by seeking at each iteration an approximate stationary point of the augmented Lagrangian subproblem with the Riemannian gradient descent method, and recently they reformulated \eqref{prob} as a Riemannian nonconvex-linear minimax problem and developed a Riemannian alternating descent ascent (RADA) algorithmic framework in \cite{xu2026riemannian}. These two methods were proved to return an $\epsilon$-stationary point with $O(\epsilon^{-3})$ first-order oracle calls under a slightly stronger version of Assumption \ref{ass0}. In addition, for problem \eqref{prob} with additional nonlinear inequality constraints, Zhou et al. \cite{Zhou2023} proposed a RiALM by solving inexactly the augmented Lagrangian subproblems with the manifold semismooth Newton method, and proved the subsequential convergence of the primal variable sequence under a constant positive linear dependent constraint qualification in the Riemannian sense. 

From the above discussion, for the composite problem \eqref{prob}, there is still a lack of algorithms with a full convergence certificate. For its special case $F\equiv\mathcal{I}$, the RPG method \cite{Huang2022a} has the full convergence if $\Theta$ satisfies the Riemannian KL property, but to check whether the property is satisfied by a continuous semialgebraic function is nontrivial since it requires constructing a chart $\phi_{x}$ for $x\in\mathcal{M}$; the proximal quasi-Newton methods \cite{Wang2023,Wang2024} have the full convergence certificate but need a restricted condition for the Riemannian Hessian of $f$. Not only that, their full convergence analysis either requires the exact stationary points \cite{Huang2022a} or optimal solutions \cite{Wang2023,Wang2024} of subproblems, which are unavailable in practice due to computation error or cost. The algorithm proposed recently in \cite{LiZhang2024} is applicable to problem \eqref{prob} with $F\equiv\mathcal{I}$, but its full convergence analysis also requires the exact optimal solutions of subproblems, so the gap still exists between theoretical analysis and practical implementation. For the special case $F\equiv\mathcal{I}$, Huang and Wei \cite{Huang2023} proposed an inexact RPG (IRPG) by solving \eqref{RPG1} at each iteration to achieve $\widetilde{v}^k\in T_{x^k}\mathcal{M}$ satisfying $\|\widetilde{v}^k-v^k\|\le q(\varepsilon_k,\|\widetilde{v}^k\|)$ and $\ell_{x^k}(\widetilde{v}^k)\leq\ell_{x^k}(0)$, where $v^k$ is an exact stationary point of subproblems and $q\!:\mathbb{R}^2\to\mathbb{R}$ is a continuous function with $q(\varepsilon_k,\|\widetilde{v}^k\|)$ for some $\varepsilon_k>0$ to control the accuracy for solving subproblems. They achieved the full convergence of the iterate sequence if $\Theta$ satisfies the Riemannian KL property on the accumulation point set and $q(\varepsilon_k,\|\widetilde{v}^k\|)=\varepsilon_k^2$ with $\sum_{k=1}^{\infty}\varepsilon_k<\infty$. However, its implementable version requires the retraction-convexity of $\vartheta$ on $\mathcal{M}$, and now it is unclear which nonsmooth $\vartheta$ satisfy such a property. Thus, to design an inexact algorithm for \eqref{prob} with a full convergence certificate in theory and a good performance in computation is still an unresolved task. This is precisely the motivation for this work. We notice that after the archive version of this work  (\url{https://arxiv.org/abs/2508.12003}) appeared, Zheng et al. \cite{zheng2025new} made their work public where, they proposed an inexact ManPL (iManPL) with the inexactness criterion HACC similar to ours but did not provide the full convergence of the iterate sequence. For clarity, we summarize the algorithms for problem \eqref{prob} in Table \ref{Compar-tab}.

\begin{table*}[h]
\centering
\caption{\centering Summary of the related algorithms for problem \eqref{prob}}\label{Compar-tab}
\setlength{\abovecaptionskip}{2pt}
\setlength{\belowcaptionskip}{0pt}
\scalebox{0.68}[0.70]{
\begin{tabular}{c|c|c|c|c|c|c} 
\hline    
Methods& Mapping $F$ &Manifold & Subproblems & Oracle complexity & S-convergence & F-convergence\\
\hline
RiALM \cite{deng2023manifold}&Linear&Compact& Inexact &No&Yes&No\\
\hline
DSGM \cite{Beck2023}&Linear&Compact& No &$O(\epsilon^{-3})$&Yes&No\\
\hline
RADMM \cite{LiMa2024}&Linear&Compact& No &$O(\epsilon^{-4})$&No&No\\
\hline
RiALM \cite{deng2025oracle}&Linear&Compact&Inexact&$O(\epsilon^{-3})$&No&No\\
\hline
ManPL \cite{Wang2022}&Nonlinear&Compact&Exact&No&Yes&No\\
\hline
RiALM \cite{Xu2024}&Nonlinear&Embedded& Inexact &$O(\epsilon^{-3})$&No&No\\
\hline
RADA \cite{xu2026riemannian}&Nonlinear&Embedded& No &$O(\epsilon^{-3})$&No&No\\ 
\hline
RiALM \cite{Zhou2023}&Nonlinear&General&Inexact&No&Yes&No\\ 
\hline
iManPL \cite{zheng2025new}&Nonlinear&Compact&Inexact&$O(\epsilon^{-3})$&Yes&No\\
\hline Ours&Nonlinear&Closed&Inexact&$O(\epsilon^{-3})$&Yes&Yes\\
\hline
\end{tabular}}

{\footnotesize where ``S-convergence'' signifies subsequential convergence, and ``F-convergence'' means full convergence.}
\end{table*}

\subsection{Main Contributions}\label{sec1.2}

This work aims at developing an efficient inexact algorithm with a full convergence certificate for problem \eqref{prob}. And the main contributions are stated as follows. 

{\bf(i)} We propose a Riemannian inexact variable metric proximal linearization method (RiVMPL), which at each iteration solves a strongly convex subproblem 
\begin{equation}\label{init-subprob}
 \min_ {v\in T_{x^k}\mathcal{M}}\Theta_k(v):=\langle\nabla\!f(x^k), v\rangle+\frac{1}{2}\|v\|^2_{\mathcal{Q}_k}+\vartheta(F(x^k)\!+\!F'(x^k)v)+f(x^k)
\end{equation}
to seek a direction $v^k\in T_{x^k}\mathcal{M}$ with a certain decrease and then yield the next iterate by retracting $x^k+v^k$ onto the manifold $\mathcal{M}$ with a retraction $R$, where $\mathcal{Q}_k:\mathbb{X}\to\mathbb{X}$ is a PD linear operator. The introduction of $\mathcal{Q}_k$ is inspired by the work \cite{tao2023inexact}, which on one hand merges the second-order information of $f$ and $F$ into the subproblems to cope with their possible bad scaling, and on the other hand  simplifies greatly the solving of subproblems (see Section \ref{sec6}). Since the Lipschitz moduli of $\nabla\!f$ and $F'$ at the iterates are usually unknown, our method follows the line in \cite{tao2023inexact} to search for a tight upper estimation for them to formulate the PD linear operator $\mathcal{Q}_k$ and simultaneously solves the associated strongly convex subproblem. To our knowledge, this is the first Riemannian type inexact algorithm proposed by inexactly solving strongly convex subproblems, constructed with a variable metric proximal term $\frac{1}{2}\|v\|^2_{\mathcal{Q}_k}$ and the linearization of $F$ at the iterate. Different from the IRPG \cite{Huang2023}, the inexactness criterion for seeking $v^k$ is easily implementable. Unlike the proximal quasi-Newton methods \cite{Wang2023,Wang2024}, the variable metric linear operator $\mathcal{Q}_k$ is not restricted to be from $T_{x^k}\mathcal{M}$ to $T_{x^k}\mathcal{M}$, so its construction avoids the computation cost of projecting onto $T_{x^k}\mathcal{M}$. Our RiVMPL, as an inexact and variable metric version of ManPL \cite{Wang2022}, keeps its goodness to solve strongly convex subproblems, rather than to solve subproblems with the manifold and a highly nonlinear term like RiALM \cite{Xu2024}.
	
{\bf(ii)} Under the boundedness assumption on the iterate sequence, we establish the $O(\epsilon^{-2})$ iteration complexity and the $O(\epsilon^{-2})$ calls to the subproblem solver for returning an $\epsilon$-stationary point defined with the original variable as in Xu et al. \cite{Xu2024}. The existing iteration complexity for the algorithms, designed by solving a sequence of strongly convex subproblems, mostly focus on an $\epsilon$-stationary point by the optimal solution of subproblems rather than the original variable. When the dual fast first-order method (DFG) in \cite{Necoara2016} with returning the last primal iterate (resp. the average primal iterate) is adopted as an inner solver, the $O(\epsilon^{-4})$ (resp. $O(\epsilon^{-3})$) oracle complexity is obtained for RiVMPL with $\mathcal{Q}_{k}$ specified as in Theorem \ref{oracle}. This oracle complexity is consistent with that of RADMM in \cite{LiMa2024} (resp. RiALM in \cite{Xu2024} and RADA \cite{xu2026riemannian}) applied to \eqref{prob},  except that the former calls a retraction only at the outer iteration and the latter does this at each iteration.     
	
{\bf(iii)} We prove that the whole iterate sequence converges to a stationary point if  the constructed potential function $\Xi_{\widetilde{c}}$ has the KL property on the set of cluster points, and characterize the local convergence rate if the associated composite function $\Xi$ has the KL property of exponent $q\in[\frac{1}{2},1)$ at the interested point. It is worth emphasizing that the potential function value lacks the decrease due to the manifold constraint, so the analysis technique in \cite{tao2023inexact} is inapplicable to RiVMPL. We also provide a condition involving the original data for $\Xi$ to have the KL property of exponent $q\in[0,1)$ at the interested point, which is weaker than the one obtained by applying \cite[Theorem 3.2]{LiPong2018} to $\Xi$. Notice that few criteria are available to check the KL property with an explicit exponent $q\in[0,1)$ for the composite function $\Xi$. Since RiVMPL is an inexact version of the methods in \cite{Wang2023,Wang2024} if $\mathcal{Q}_k$ takes $\mathcal{B}_k$ there and $F\equiv\mathcal{I}$, these results also provide the full convergence certificate for them under the mild KL property, and the local linear convergence rate without restricting the Riemannian Hessian of $f$. Unlike the Riemannian KL property required in \cite{Huang2022a,Huang2023}, there are various chain rules friendly to optimization for identifying the KL property of nonsmooth functions, so to identify the KL property of $\Xi_{\widetilde{c}}$ is an easy task when the expression of $\mathcal{M}$ is available.

We test the performance of RiVMPL, armed with a dual semismooth Newton method as the inner solver, for solving Examples \ref{Example1}-\ref{Example3}. The results indicate that it needs less running time than the RiVMPL armed with the DFG for Example \ref{Example2}, and is comparable with the latter for Examples \ref{Example1} and \ref{Example3}. Numerical comparisons with the double-loop RiALM \cite{Xu2024} and the single-loop RADMM \cite{LiMa2024} show that the RiVMPL with the dual semismooth Newton method is superior to RADMM and RiALM in terms of the normalized mutual information scores (NMIs) for Example \ref{Example1} and the objective value and running time for Example \ref{Example2}, and is comparable with RADMM in the running time (much less than that of RiALM) and comparable even better than RiALM by the objective value (better than the one by RADMM) for Example \ref{Example3}.

\subsection{Notation}\label{sec1.3}

Throughout this paper, a hollow capital represents a finite-dimensional real vector space endowed with an inner product $\langle\cdot,\cdot\rangle$ and its induced norm $\|\cdot\|$, and $\mathbb{R}^{n\times r}$ denotes the space of all $n\times r$ real matrices with the trace inner product and its induced Frobenius norm $\|\cdot\|_F$. For a positive semidefinite (PSD) linear operator $\mathcal{Q}:\mathbb{X}\to\mathbb{X}$, write $\mathcal{Q}\succeq 0$, and define $\|\cdot\|_{\mathcal{Q}}:=\sqrt{\langle\cdot,\mathcal{Q}\cdot\rangle}$. Let $\mathbb{N}$ be the set of natural numbers, and $\mathbb{N}^*$ be the set of positive integers. For any $k\in\mathbb{N}$, write $[k]:=\{0,\ldots,k\}$; and for any $k\in\mathbb{N}^*$, $[k]_{+}:=\{1,\ldots,k\}$. For a closed set $C\subset\mathbb{X}$, $\delta_{C}$ represents its indicator function, i.e., $\delta_{C}(x)=0$ if $x\in C$, otherwise $\delta_{C}(x)=\infty$, and $\Pi_{C}(\cdot)$ denotes the projection mapping onto $C$. For any $x\in\mathbb{X}$, $\mathbb{B}(x,\delta)$ denotes the open ball centered at $x$ with radius $\delta$, and $\overline{\mathbb{B}}(x,\delta)$ represents its closure. For a mapping $g\!:\mathbb{X}\to\mathbb{Y}$ and a point $x\in\mathbb{X}$, if $g$ is differentiable at $x$, $\nabla g(x)$ means the adjoint of $g'(x)\!:\mathbb{X}\to\mathbb{Y}$, the differential of $g$ at $x$; if $g$ is twice differentiable at $x$, $D^2g(x)$ denotes the second-order differential of $g$ at $x$; and if $g$ is locally Lipschitz at $x$, ${\rm lip}\,g(x)$ signifies the Lipschitz modulus of $g$ at $x$. For a mapping $g\!:\mathbb{X}\to\mathbb{Y}$ and a set $C\subset\mathbb{X}$, $g(C):=\{g(x)\,|\,x\in C\}$ denotes a set of $\mathbb{Y}$. For any $x\in\mathcal{M}$, $T_x\mathcal{M}$ and $N_x\mathcal{M}$ signify the tangent and normal spaces of $\mathcal{M}$ at $x$. For a closed proper convex $h\!:\mathbb{Z}\to\overline{\mathbb{R}}$, its proximal mapping and Moreau envelope associated with a parameter $\gamma>0$ are respectively defined as $\mathcal{P}_{\gamma h}(z)\!:=\mathop{\arg\min}_{z'\in\mathbb{Z}}\big\{h(z')+\frac{1}{2\gamma}\|z'-z\|^2\big\}$ and $e_{\gamma h}(z)\!:=\min_{z'\in\mathbb{Z}}\big\{h(z')+\frac{1}{2\gamma}\|z'-z\|^2\big\}$.

 \section{Preliminaries}\label{sec2}

This section first introduces the stationary points of problem \eqref{prob}, and then recalls some preliminary knowledge on manifolds and Kurdyka-Lojasiewicz (KL) property.  

\subsection{Stationary points}\label{sec2.1}

In view of Assumption \ref{ass0} (i)-(iii), $\Theta$ is locally Lipschitz at any $x\in\mathcal{M}$, and it is regular at any $x\in\mathcal{M}$ with  $\partial\Theta(x)=\nabla\!f(x)+\nabla F(x)\partial\vartheta(F(x))$ by \cite[Theorem 10.6]{RW98}, where $\partial\Theta(x)$ is the limiting (or Mordukhovich) subdifferential of $\Theta$ at $x$. Along with the regularity of $\mathcal{M}$, the extended objective function $\widetilde{\Theta}:=\Theta+\delta_{\mathcal{M}}$ of \eqref{prob} is regular with 
\[
\partial\widetilde{\Theta}(x)=\nabla\!f(x)+\nabla\!F(x)\partial\vartheta(F(x))+N_{x}\mathcal{M}\quad\forall x\in\mathcal{M}.
\]
Based on this, we introduce the following concept of stationary points for problem \eqref{prob}.
\begin{definition}\label{def-spoint}
 {\bf(i)} A vector $\overline{x}\in\mathcal{M}$ is called a stationary point of problem \eqref{prob} if 
 \begin{equation}\label{spoint-inclusion}
 0\in\nabla\!f(\overline{x})+\nabla\!F(\overline{x})\partial \vartheta(F(\overline{x}))+N_{\overline{x}}\mathcal{M}=\partial\widetilde{\Theta}(\overline{x}).
 \end{equation}
 {\bf(ii)} For any given $\epsilon>0$, a vector $\overline{x}\in\mathcal{M}$ is called an $\epsilon$-stationary point of \eqref{prob} if there exist $\overline{z}\in\mathbb{Z}$ and $\overline{\xi}\in\partial\vartheta(\overline{z})$ such that 
 \(
 \max\big\{\|\Pi_{T_{\overline{x}}\mathcal{M}}(\nabla\!f(\overline{x})+\!\nabla\!F(\overline{x})\overline{\xi})\|,\|F(\overline{x})-\overline{z}\|\big\}\le\epsilon.
 \) 
\end{definition}
\begin{remark}\label{remark-spoint}
 {\bf(a)} It is not difficult to check that $\overline{x}\in\mathcal{M}$ is a stationary point of \eqref{prob} if and only if there exists a PD linear operator $\mathcal{Q}:\mathbb{X}\to\mathbb{X}$ such that 
 \begin{equation*}
  0=\overline{v}_{\mathcal{Q}}:=\mathop{\arg\min}_{v\in T_{\overline{x}}\mathcal{M}}\,\langle\nabla\!f(\overline{x}), v\rangle+\frac{1}{2}\|v\|_{\mathcal{Q}}^2+\vartheta(F(\overline{x})\!+\!F'(\overline{x})v).
 \end{equation*}
	
 \noindent
 {\bf(b)} The $\epsilon$-stationary point here is precisely the one introduced in Xu et al. \cite{Xu2024}. When $F$ is linear, if $\overline{x}\in\mathcal{M}$ is an $\epsilon$-stationary point, there exist $\overline{z}\in\mathbb{Z}$ and $\overline{\xi}\in\partial\vartheta(\overline{z})$ such that $(\overline{x},\overline{z},\overline{\xi})$ is a $\sqrt{2}\epsilon$-stationary point defined in \cite{LiMa2024} by the constrained reformulation of \eqref{prob}, while if $(\overline{x},\overline{z},\overline{\lambda})$ is an $\epsilon$-stationary point defined in \cite{LiMa2024}, then $\overline{x}$ is a $(1+\|\nabla F(\overline{x})\|)\epsilon$-stationary point. 


\end{remark}  
\subsection{Tangent bundle of $\mathcal{M}$}\label{sec2.2}

Recall that $\mathcal{M}$ is a $\mathcal{C}^{2}$-smooth embedded closed submanifold of $\mathbb{X}$. According to \cite[Definition 3.10]{Boumal2023}, at any $x\in\mathcal{M}$, there exists an open neighborhood $\mathcal{U}_{x}$ of $x$ and a $\mathcal{C}^{2}$-smooth mapping $G_{x}\!:\mathbb{X}\to \mathbb{Y}$ such that for all $u\in\mathcal{M}\cap\mathcal{U}_{x}$, $G_{x}'(u)\!:\mathbb{X}\to \mathbb{Y}$ are surjective and $\mathcal{M}\cap\mathcal{U}_{x}=\big\{u\in\mathcal{U}_{x}\,|\,G_{x}(u)=0\big\}$. Then, for any $u\in\mathcal{M}\cap\mathcal{U}_{x}$, it holds 
\begin{equation}\label{tangent-normal}
T_u\mathcal{M}=\big\{d\in\mathbb{X}\mid G_{x}'(u)d=0\big\}\ \ {\rm and}\ \  N_u\mathcal{M}=\big\{\nabla G_{x}(u)y\mid y\in\mathbb{Y}\big\}.
\end{equation}
Unless otherwise stated, in the rest of this paper, for every $x\in\mathcal{M}$, $G_x$ represents such a $\mathcal{C}^{2}$-smooth  mapping. From \cite[Definition 3.42]{Boumal2023}, the tangent bundle of $\mathcal{M}$ is the disjoint union of the tangent spaces of $\mathcal{M}$, i.e., $T\mathcal{M}\!:=\{(x,d)\in\mathbb{X}\times\mathbb{X}\mid x\in\mathcal{M}, d\in T_{x}\mathcal{M}\}$. Here, ``disjoint'' means that for each tangent vector $d\in T_{x}\mathcal{M}$, the pair $(x,d)$ rather
than simply $d$ is retained. Since $\mathcal{M}$ is an embedded submanifold of $\mathbb{X}$, according to \cite[Theorem 3.43]{Boumal2023}, $T\mathcal{M}$ is a $\mathcal{C}^{1}$-smooth embedded submanifold of $\mathbb{X}\times\mathbb{X}$. Next we prove that $T\mathcal{M}$ is also closed and characterize the normal space of $T\mathcal{M}$.
\begin{lemma}\label{lemma-Tb}
 The tangent bundle $T\mathcal{M}$ of $\mathcal{M}$ is closed, and for any $(x,v)\in T\mathcal{M}$ it holds
 \begin{equation*}
  N_{(x,v)}T\mathcal{M}=\left\{\begin{pmatrix}
	 \nabla G_{x}(x)\xi+[D^2G_{x}(x)v]^*\zeta\\
	\nabla G_{x}(x)\zeta
 \end{pmatrix}\ |\ \xi\in\mathbb{Y},\zeta\in\mathbb{Y}\right\}.
\end{equation*}
In particular, for every $x\in \mathcal{M}$, there exists an open neighborhood $\mathcal{U}_{x}$ of $x$ such that for all $(z,v)\in T\mathcal{M}\cap [\mathcal{U}_{x}\times \mathbb{X}]$,
 \begin{equation}\label{tb-eq1}
  N_{(z,v)}T\mathcal{M}=\left\{\begin{pmatrix}
	 \nabla G_{x}(z)\xi+[D^2G_{x}(z)v]^*\zeta\\
	\nabla G_{x}(z)\zeta
 \end{pmatrix}\ |\ \xi\in\mathbb{Y},\zeta\in\mathbb{Y}\right\}.
\end{equation}
\end{lemma}
\begin{proof}
 Let $\{(x^k,v^k)\}_{k\in\mathbb{N}}\subset T\mathcal{M}$ be a sequence with $(x^k,v^k)\to (x,v)$ as $k\to \infty$. We claim that $(x,v)\in T\mathcal{M}$, so the tangent bundle $T\mathcal{M}$ is closed. Indeed, $x\in \mathcal{M}$ because $\{x^k\}_{k\in\mathbb{N}}\subset\mathcal{M}$ and $\mathcal{M}$ is closed, so it suffices to prove $v\in T_x\mathcal{M}$. Since $v^k\in T_{x^k}\mathcal{M}$ for each $k$ and $x^k\to x$ as $k\to\infty$, from the first equality of \eqref{tangent-normal}, for sufficiently large $k$, $G_{x}'(x^k)v^k=0$. Taking the limit as $k\to\infty$ in this equality and using the smoothness of $G_{x}$ leads to $G_{x}'(x)v=0$, which implies $v\in T_x\mathcal{M}$. The first part of the conclusions follows. For the second part, fix any $(x,v)\in T\mathcal{M}$. Since $x\in\mathcal{M}$, there exists a $\mathcal{C}^{2}$-smooth mapping $G_{x}\!:\mathbb{X}\to \mathbb{Y}$ and an open neighborhood $\mathcal{U}_{x}$ of $x$ such that for all $u\in\mathcal{U}_{x}$, $G_{x}'(u)\!:\mathbb{X}\to \mathbb{Y}$ are surjective and  $\mathcal{M}\cap\mathcal{U}_{x}=\big\{u\in\mathcal{U}_{x}\,|\,G_{x}(u)=0\big\}$. Together with the definition of $T\mathcal{M}$ and the first equality of \eqref{tangent-normal}, it follows
 \begin{equation}\label{temp-TM}
 T\mathcal{M}\cap\big[\mathcal{U}_{x}\times\mathbb{X}\big]=\big\{(u,d)\in\mathcal{U}_{x}\times\mathbb{X}\ |\ G_{x}(u)=0,\,G_{x}'(u)d=0\big\}.
 \end{equation}
 Define $H(u,d):=(G_{x}(u); G_{x}'(u)d)$ for $(u,d)\in\mathcal{U}_{x}\times\mathbb{X}$. It is not hard to check that the mapping $H'(x,v):\mathbb{X}\times\mathbb{X}\to\mathbb{Y}\times\mathbb{Y}$ is surjective. Using \cite[Exercise 6.7]{RW98} leads to   
 \[
	N_{(x,v)}T\mathcal{M}= N_{T\mathcal{M}\cap[\mathcal{U}_{x}\times\mathbb{X}]}(x,v)=\Big\{\nabla H(x,v)\begin{pmatrix}
		\xi\\ \zeta
	\end{pmatrix}\ |\ \xi\in\mathbb{Y},\zeta\in\mathbb{Y}\Big\},
 \]
 where $N_{T\mathcal{M}\cap[\mathcal{U}_{x}\times\mathbb{X}]}(x,v)$ is the normal cone to $T\mathcal{M}\cap[\mathcal{U}_{x}\times\mathbb{X}]$ at $(x,v)$. The desired equality on $N_{(x,v)}T\mathcal{M}$ follows by the expression of $\nabla H(x,v)$. The proof is completed.
\end{proof}

Next we characterize the normal space of $T\mathcal{M}$ at a point $(x,v)$ for $x$ from a compact subset of $\mathcal{M}$. This is used to achieve the relative error condition in Proposition \ref{sdiff-gap}. 
\begin{lemma}\label{Normal-TM}
 Let $\varLambda\subset\mathcal{M}$ be a compact set. Then, there exist an $l\in\mathbb{N}^*$, points $\overline{x}^1,\ldots,\overline{x}^l\in\varLambda$, real numbers $\varepsilon_{\overline{x}^1}>0,\ldots,\varepsilon_{\overline{x}^l}>0$, and $\mathcal{C}^{2}$-smooth mappings $G_{\overline{x}^i}:\mathbb{X}\to \mathbb{Y}$ for $i\in[l]_{+}$ such that for each $i\in[l]_{+}$ and $z\in\mathbb{B}(\overline{x}^i,\varepsilon_{\overline{x}^i})$, $G_{\overline{x}^i}'(z):\mathbb{X}\to\mathbb{Y}$ is surjective, and for every $x\in\varLambda$, there exists an index $j\in[l]_{+}$ such that $x\in\mathcal{M}\cap\mathbb{B}(\overline{x}^j,\varepsilon_{\overline{x}^j}),N_{x}\mathcal{M}=\big\{\nabla G_{\overline{x}^j}(x) y\ |\ y\in\mathbb{Y}\big\}$ and 
 \begin{equation*}
  N_{(x,v)}T\mathcal{M}=\left\{\begin{pmatrix}
			\nabla G_{\overline{x}^j}(x)\xi+[D^2G_{\overline{x}^j}(x)v]^*\zeta\\
			\nabla G_{\overline{x}^j}(x)\zeta
  \end{pmatrix}\ |\ \xi\in\mathbb{Y},\zeta\in\mathbb{Y}\right\}\quad {\rm for}\ v\in T_{x}\mathcal{M}.
 \end{equation*}
\end{lemma}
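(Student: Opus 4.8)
The plan is to obtain this statement from Lemma~\ref{lemma-Tb} by a routine compactness argument, the only genuinely new point being that the local defining maps can be arranged to be submersions on a full ball rather than merely on $\mathcal{M}$. First I would fix an arbitrary $x\in\varLambda$ and, using that $\mathcal{M}$ is a $\mathcal{C}^2$-smooth embedded submanifold, pick an open neighborhood $\mathcal{U}_x$ of $x$ and a $\mathcal{C}^2$-smooth map $G_x\colon\mathbb{X}\to\mathbb{Y}$ with $\mathcal{M}\cap\mathcal{U}_x=\{u\in\mathcal{U}_x\mid G_x(u)=0\}$ and $G_x'(u)$ surjective for $u\in\mathcal{M}\cap\mathcal{U}_x$; in particular $G_x'(x)$ is surjective. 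Since the surjective elements of $\mathcal{L}(\mathbb{X},\mathbb{Y})$ form an open subset and $z\mapsto G_x'(z)$ is continuous, I can choose $\varepsilon_x>0$ so small that $\overline{\mathbb{B}}(x,\varepsilon_x)\subset\mathcal{U}_x$ and $G_x'(z)$ is surjective for every $z\in\mathbb{B}(x,\varepsilon_x)$.

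Next, the family $\{\mathbb{B}(x,\varepsilon_x)\}_{x\in\varLambda}$ is an open cover of the compact set $\varLambda$, so I would extract a finite subcover and let $\overline{x}^1,\ldots,\overline{x}^l\in\varLambda$, real numbers $\varepsilon_{\overline{x}^1},\ldots,\varepsilon_{\overline{x}^l}>0$ and maps $G_{\overline{x}^1},\ldots,G_{\overline{x}^l}$ be the corresponding centers, radii, and defining submersions; by construction $G_{\overline{x}^i}'$ is surjective on $\mathbb{B}(\overline{x}^i,\varepsilon_{\overline{x}^i})$ for each $i\in[l]$. Given any $x\in\varLambda$, there is $j\in[l]$ with $x\in\mathbb{B}(\overline{x}^j,\varepsilon_{\overline{x}^j})\subset\mathcal{U}_{\overline{x}^j}$, hence $x\in\mathcal{M}\cap\mathcal{U}_{\overline{x}^j}$ and $N_x\mathcal{M}=\{\nabla G_{\overline{x}^j}(x)y\mid y\in\mathbb{Y}\}$ by the second identity in \eqref{tangent-normal}. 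Intersecting $\mathcal{M}\cap\mathcal{U}_{\overline{x}^j}=\{u\in\mathcal{U}_{\overline{x}^j}\mid G_{\overline{x}^j}(u)=0\}$ with $\mathbb{B}(\overline{x}^j,\varepsilon_{\overline{x}^j})$ shows that, on this ball, $G_{\overline{x}^j}$ is a $\mathcal{C}^2$-smooth submersion locally defining $\mathcal{M}$ near $x$; that is, $G_{\overline{x}^j}$ and $\mathbb{B}(\overline{x}^j,\varepsilon_{\overline{x}^j})$ play exactly the roles of $G_x$ and $\mathcal{U}_x$ in the proof of Lemma~\ref{lemma-Tb}.

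Finally, for $(x,v)\in T\mathcal{M}\cap[\varLambda\times\mathbb{X}]$ with $j$ as above, I would rerun the second half of the proof of Lemma~\ref{lemma-Tb} with $G_{\overline{x}^j}$ in place of $G_x$: set $H(u,d):=(G_{\overline{x}^j}(u);G_{\overline{x}^j}'(u)d)$ on $\mathbb{B}(\overline{x}^j,\varepsilon_{\overline{x}^j})\times\mathbb{X}$, verify that $H'(x,v)$ is surjective, and apply Rockafellar and Wets \cite[Exercise 6.7]{RW98} to identify $N_{(x,v)}T\mathcal{M}$ with the range of $\nabla H(x,v)$, which is precisely the asserted formula. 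The only step requiring care is the passage from surjectivity of $G_x'$ on $\mathcal{M}\cap\mathcal{U}_x$ to surjectivity on the whole ball $\mathbb{B}(x,\varepsilon_x)$ — this is what makes the finitely many maps $G_{\overline{x}^j}$ simultaneously usable as defining submersions near every point of $\varLambda$ — but it is immediate from the openness of the surjectivity condition in finite dimension; everything else is bookkeeping together with a direct reuse of Lemma~\ref{lemma-Tb}.
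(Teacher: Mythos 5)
Your proposal is correct and follows essentially the same route as the paper: extract a finite subcover of $\varLambda$ by balls on which local $\mathcal{C}^2$ defining maps have full-rank derivative everywhere, then read off $N_x\mathcal{M}$ from \eqref{tangent-normal} and recycle the second half of the proof of Lemma~\ref{lemma-Tb} with $G_{\overline{x}^j}$ and $\mathbb{B}(\overline{x}^j,\varepsilon_{\overline{x}^j})$ in the roles of $G_x$ and $\mathcal{U}_x$. The one point you spell out — shrinking the radius so that $G_x'$ is surjective on the whole ball, via openness of the set of surjective linear maps — is a detail the paper takes as given (it is already built into how $(G_u,\varepsilon_u)$ are chosen, following Boumal's local-defining-function definition), so your version is marginally more self-contained but not different in substance.
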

\begin{proof}
 For each $u\in\mathcal{M}$, there exist $\varepsilon_{u}>0$ and a $\mathcal{C}^{2}$-smooth $G_{u}\!:\mathbb{X}\to \mathbb{Y}$ such that for each  $z\in\mathbb{B}(u,\varepsilon_{u})$, $G_{u}'(z)\!:\mathbb{X}\to \mathbb{Y}$ is surjective and $\mathcal{M}\cap\mathbb{B}(u,\varepsilon_{u})=\big\{z\in\mathbb{B}(u,\varepsilon_{u})\ |\ G_{u}(z)=0\big\}$. Note that $\varLambda\subset\bigcup_{u\in\varLambda}\mathbb{B}(u,\varepsilon_{u})$. By the Heine-Borel covering theorem, there exist an $l\in\mathbb{N}^*$, points $\overline{x}^1,\ldots,\overline{x}^l\in\varLambda$, and real numbers $\varepsilon_{\overline{x}^{1}}>0,\ldots,\varepsilon_{\overline{x}^{l}}>0$ such that $\varLambda\subset\bigcup_{i=1}^l\mathbb{B}(\overline{x}^{i},\varepsilon_{\overline{x}^{i}})$. Let $G_i:=G_{\overline{x}^i}$ for each $i\in[l]_{+}$. Pick any $x\in\varLambda\subset\mathcal{M}$. There is an index $j\in[l]_{+}$ such that 
 \begin{equation}\label{xequa-21}
 x\in\mathcal{M}\cap\mathbb{B}(\overline{x}^j,\varepsilon_{\overline{x}^j})=\big\{z\in\mathbb{B}(\overline{x}^j,\varepsilon_{\overline{x}^j})\,|\,G_{j}(z)=0\big\},
 \end{equation}
 where $G_{j}\!:\mathbb{X}\to\mathbb{Y}$ is a $\mathcal{C}^{2}$-smooth mapping with $G_{j}'(z)$ for $z\in\mathbb{B}(\overline{x}^j,\varepsilon_{\overline{x}^j})$ being surjective. From the openness of $\mathbb{B}(\overline{x}^j,\varepsilon_{\overline{x}^j})$,   $N_{x}\mathcal{M}=N_{\mathcal{M}\cap\mathbb{B}(\overline{x}^j,\varepsilon_{\overline{x}^j})}(x)=\big\{\nabla G_j(x) y\ |\ y\in\mathbb{Y}\big\}$, where $N_{\mathcal{M}\cap\mathbb{B}(\overline{x}^j,\varepsilon_{\overline{x}^j})}(x)$ is the normal cone to $\mathcal{M}\cap\mathbb{B}(\overline{x}^j,\varepsilon_{\overline{x}^j})$ at $x$. For $v\in T_{x}\mathcal{M}$, since $(x,v)\in T\mathcal{M}\cap[\varLambda\times\mathbb{X}]$, using \eqref{xequa-21} and the same arguments as for Lemma \ref{lemma-Tb} yields the result. 
\end{proof}
\begin{corollary}\label{corollary-normalTM}
 For any $\{(x^k,v^k)\}_{k\in\mathbb{N}}\subset T\mathcal{M}$ with bounded $\{x^k\}_{k\in\mathbb{N}}$, there exist a compact set $\Lambda\supset\{x^k\}_{k\in\mathbb{N}}$, an index $l\in\mathbb{N}^*$, points $\overline{x}^{1},\ldots,\overline{x}^{l}\in\varLambda$, real numbers $\varepsilon_{\overline{x}^{1}},\ldots,\varepsilon_{\overline{x}^{l}}>0$, and $\mathcal{C}^{2}$-smooth mappings $G_{\overline{x}^{i}}\!:\mathbb{X}\to \mathbb{Y}$ for $i\in[l]_{+}$ such that for each $i\in[l]_{+}$ and $z\in\mathbb{B}(\overline{x}^{i},\varepsilon_{\overline{x}^{i}})$, $G_{\overline{x}^{i}}'(z)\!:\mathbb{X}\to \mathbb{Y}$ is  surjective, and for each $k\in\mathbb{N}$ there is an index $j_k\in[l]_{+}$ such that 
 \begin{subnumcases}{}
  x^k\in\mathcal{M}\cap\mathbb{B}(\overline{x}^{j_k},\varepsilon_{\overline{x}^{j_k}}),\,N_{x^k}\mathcal{M}=\big\{\nabla G_{\overline{x}^{j_k}}(x^k) y\ |\ y\in\mathbb{Y}\big\},\\
  N_{(x^k,v^k)}T\mathcal{M}=\left\{\begin{pmatrix}
  \nabla G_{\overline{x}^{j_k}}(x^k)\xi+[D^2G_{\overline{x}^{j_k}}(x^k)v^k]^*\zeta\\
  \nabla G_{\overline{x}^{j_k}}(x^k)\zeta
 \end{pmatrix}\ |\ \xi\in\mathbb{Y},\zeta\in\mathbb{Y}\right\}.
 \end{subnumcases}
\end{corollary}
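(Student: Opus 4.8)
The plan is to deduce the corollary directly from Lemma \ref{Normal-TM}, the only work being to manufacture an appropriate compact set $\varLambda$ from the bounded sequence. First I would set $\varLambda:=\overline{\{x^k\mid k\in\mathbb{N}\}}$. Since $\mathbb{X}$ is finite-dimensional and $\{x^k\}_{k\in\mathbb{N}}$ is bounded, the set $\varLambda$ is closed and bounded, hence compact by the Heine--Borel theorem; and since $\mathcal{M}$ is closed with $\{x^k\}_{k\in\mathbb{N}}\subset\mathcal{M}$, we have $\varLambda\subset\mathcal{M}$. Clearly $\{x^k\}_{k\in\mathbb{N}}\subset\varLambda$, which is the first assertion of the corollary.

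Next I would invoke Lemma \ref{Normal-TM} with this $\varLambda$. This produces an $l\in\mathbb{N}_{+}$, points $\overline{x}^{1},\ldots,\overline{x}^{l}\in\varLambda$, radii $\varepsilon_{\overline{x}^{1}}>0,\ldots,\varepsilon_{\overline{x}^{l}}>0$, and $\mathcal{C}^{2}$-smooth mappings $G_{\overline{x}^{i}}:\mathbb{X}\to\mathbb{Y}$ for $i\in[l]$ such that each $G_{\overline{x}^{i}}'(z)$ is surjective for $z\in\mathbb{B}(\overline{x}^{i},\varepsilon_{\overline{x}^{i}})$, and such that every $x\in\varLambda$ admits an index $j\in[l]$ with $x\in\mathcal{M}\cap\mathbb{B}(\overline{x}^{j},\varepsilon_{\overline{x}^{j}})$ and $N_{x}\mathcal{M}=\{\nabla G_{\overline{x}^{j}}(x)y\mid y\in\mathbb{Y}\}$, and, moreover, whenever $(x,v)\in T\mathcal{M}\cap[\varLambda\times\mathbb{X}]$ the corresponding normal-space formula for $N_{(x,v)}T\mathcal{M}$ holds with the same index $j$.

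Finally I would instantiate this at each iterate. Fix $k\in\mathbb{N}$. Since $x^k\in\varLambda$, Lemma \ref{Normal-TM} supplies some $j_k\in[l]$ with $x^k\in\mathcal{M}\cap\mathbb{B}(\overline{x}^{j_k},\varepsilon_{\overline{x}^{j_k}})$ and $N_{x^k}\mathcal{M}=\{\nabla G_{\overline{x}^{j_k}}(x^k)y\mid y\in\mathbb{Y}\}$, which is the first displayed relation. Because $\{(x^k,v^k)\}_{k\in\mathbb{N}}\subset T\mathcal{M}$ and $x^k\in\varLambda$, we have $(x^k,v^k)\in T\mathcal{M}\cap[\varLambda\times\mathbb{X}]$, so the normal-space part of Lemma \ref{Normal-TM} applies with this same $j_k$ and gives the second displayed identity. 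There is essentially no obstacle here: the only points requiring a word of care are that the closure of a bounded subset of the closed set $\mathcal{M}$ is a compact subset of $\mathcal{M}$ (where finite-dimensionality is used), and that the index witnessing $x^k\in\mathbb{B}(\overline{x}^{j_k},\varepsilon_{\overline{x}^{j_k}})$ is the very same index appearing in the formula for $N_{(x^k,v^k)}T\mathcal{M}$ — which is exactly how Lemma \ref{Normal-TM} is stated. Thus the corollary is a direct bookkeeping consequence of that lemma.
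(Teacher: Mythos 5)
Your proof is correct and is precisely the intended argument: the paper states this corollary immediately after Lemma \ref{Normal-TM} without a separate proof, treating it as an immediate consequence obtained by applying that lemma to the compact set $\varLambda=\overline{\{x^k\mid k\in\mathbb{N}\}}\subset\mathcal{M}$. Your choice of $\varLambda$ and the pointwise instantiation at each $(x^k,v^k)$ fill in exactly the bookkeeping the paper leaves implicit.
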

\subsection{Basic properties of retraction}\label{sec2.3}

Retraction is an approximation to the exponential mapping for a Riemannian manifold (see \cite[Definition 4.1]{Absil2008}), and is often used to retract a point on the tangent space of the manifold to the manifold. Its formal definition is stated as follows.
\begin{definition}\label{def-retract}
A smooth mapping $R:T\mathcal{M}\to\mathcal{M}$ is called a retraction if for each $x\in\mathcal{M}$ the restriction    $R_x(\cdot):=R(x,\cdot):T_x\mathcal{M}\to \mathcal{M}$  satisfies $R_x(0)=x$ and $R_x'(0)=\mathcal{I}$.
\end{definition}

The following lemma summarizes the basic properties of retraction. Since its proof is similar to the proofs of \cite[Eqs. (B.3)-(B.4)]{Boumal2019}, we here omit it. 
\begin{lemma}\label{lemma-retract}
 For any compact set $\varLambda\subset\mathcal{M}$, retraction $R$ of $\mathcal{M}$ and $\delta>0$, there exist constants $M_1>0$ and $M_2>0$ such that for all $x\in\varLambda$ and $v\in T_{x}\mathcal{M}\cap \overline{\mathbb{B}}(0,\delta)$, 
 \begin{equation*}
 \|R_{x}(v)-x\|\leq M_1\|v\|\ \ {\rm and}\ \ 
 \|R_{x}(v)-x-v\|\leq M_2\|v\|^2.
 \end{equation*}
\end{lemma}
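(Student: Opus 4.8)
The plan is to derive both estimates from a compactness argument combined with the fundamental theorem of calculus and a second-order Taylor expansion of the fiber maps $R_x$ at the origin. First I would set
\[
K:=T\mathcal{M}\cap\big(\varLambda\times\overline{\mathbb{B}}(0,\delta)\big),
\]
which is compact because $\varLambda\times\overline{\mathbb{B}}(0,\delta)$ is compact and $T\mathcal{M}$ is closed by Lemma \ref{lemma-Tb}. Note that $K$ is star-shaped along each fiber: if $(x,v)\in K$ then $(x,tv)\in K$ for every $t\in[0,1]$, since $T_x\mathcal{M}$ is a linear subspace and $\|tv\|\le\|v\|\le\delta$. Using the smoothness of $R$ (and, if needed, a finite subcover of $\varLambda$ by the coordinate neighborhoods associated with the local defining maps $G_x$ of Section \ref{sec2.2} to localize, as in Lemma \ref{Normal-TM}), the fiberwise first and second derivatives $(x,v)\mapsto R_x'(v)$ and $(x,v)\mapsto R_x''(v)$, regarded as linear and bilinear maps into the ambient space $\mathbb{X}$, are continuous on $T\mathcal{M}$, and hence attain finite maxima over the compact set $K$; I would then set $M_1:=\max_{(x,v)\in K}\|R_x'(v)\|$ and $M_2:=\frac12\max_{(x,v)\in K}\|R_x''(v)\|$.

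For the first bound, fix $x\in\varLambda$ and $v\in T_x\mathcal{M}\cap\overline{\mathbb{B}}(0,\delta)$. Since $t\mapsto R_x(tv)$ is well defined and differentiable on $[0,1]$ with $\frac{d}{dt}R_x(tv)=R_x'(tv)[v]$, and $R_x(0)=x$, the fundamental theorem of calculus gives $R_x(v)-x=\int_0^1 R_x'(tv)[v]\,dt$; because $(x,tv)\in K$ for all $t\in[0,1]$, this yields $\|R_x(v)-x\|\le M_1\|v\|$.

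For the second bound, the second-order Taylor expansion with integral remainder gives $R_x(v)=R_x(0)+R_x'(0)[v]+\int_0^1(1-t)\,R_x''(tv)[v,v]\,dt$. Substituting $R_x(0)=x$ and $R_x'(0)=\mathcal{I}$ from Definition \ref{def-retract}, one obtains $R_x(v)-x-v=\int_0^1(1-t)\,R_x''(tv)[v,v]\,dt$, and therefore $\|R_x(v)-x-v\|\le\big(\int_0^1(1-t)\,dt\big)\max_{(x,v)\in K}\|R_x''(v)\|\cdot\|v\|^2=M_2\|v\|^2$.

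I expect the only delicate point to be the regularity bookkeeping behind the uniform bounds on $R_x'$ and $R_x''$: since $\mathcal{M}$ is merely $\mathcal{C}^2$-smooth, the tangent bundle $T\mathcal{M}$ is only $\mathcal{C}^1$, so one has to argue carefully (via ambient extensions of $R$ near the zero section, or via the local defining maps $G_x$ together with the Heine--Borel covering theorem) that the fiber maps $R_x$ are twice continuously differentiable with first and second derivatives depending continuously on $x$ over $\varLambda$, so that the maxima defining $M_1,M_2$ are indeed finite. Once this is granted, the two displayed identities immediately produce the claimed $O(\|v\|)$ and $O(\|v\|^2)$ estimates, uniformly over $x\in\varLambda$.
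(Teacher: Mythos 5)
Your proof is correct and takes essentially the same approach as the reference the paper cites for this result (Boumal et al.\ \cite[Appendix B]{Boumal2019}): compactness of $K:=T\mathcal{M}\cap(\varLambda\times\overline{\mathbb{B}}(0,\delta))$ together with first- and second-order Taylor expansions of the fiber maps $R_x$ at the origin, using $R_x(0)=x$ and $R_x'(0)=\mathcal{I}$. The regularity caveat you raise at the end---whether the fiberwise second derivative depends continuously on $(x,v)$ when $T\mathcal{M}$ is only $\mathcal{C}^1$---is a genuine technical point, but the cited reference (and this paper implicitly) resolves it by assuming the retraction is sufficiently smooth, so your argument is sound as stated.
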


Now we bound the difference between $\Theta(R_{\overline{x}}(v))$ and $\Theta(\overline{x}+v)$ for $(\overline{x},v)\in T\mathcal{M}$.
\begin{proposition}\label{prop-retract}
Consider a compact set $\varLambda\subset\mathcal{M}$ and a retraction $R$ of $\mathcal{M}$. Then, 
\begin{description}
\item[(i)] for any bounded set $V\subset\mathbb{X}$, if  $f$ and $F$ are continuously differentiable on an open convex set $\mathcal{O}'\supset\mathcal{M}\cup{\rm cl}\,(\varLambda+V)$, then there exists $\gamma>0$ such that for all $x\in\varLambda$ and $v\in T_x{\mathcal{M}}\cap V$, $|\Theta(R_{x}(v))-\Theta(x+v)|\le\!\gamma\|v\|^2$.


\item[(ii)] for each $\overline{x}\in\varLambda$, there exist $\varepsilon_{\overline{x}}>0$ and $M_{\overline{x}}>0$ such that for any $v\in  T_{\overline{x}}\mathcal{M}\cap\overline{\mathbb{B}}(0,\varepsilon_{\overline{x}})$ and $\alpha>M_{\overline{x}}\big[{\rm lip}\,\vartheta(F(\overline{x}))\,{\rm lip}\,F(\overline{x})+{\rm lip}\,f(\overline{x})\big]$,  
 $|\Theta(R_{\overline{x}}(v))-\Theta(\overline{x}\!+\!v)|\le \alpha\|v\|^2$. 
\end{description} 
\end{proposition}
\begin{proof}
{\bf(i)} Since $V$ is bounded, there exists $\delta>0$ such that $V\subset\overline{\mathbb{B}}(0,\delta)$.   Invoking Lemma \ref{lemma-retract} for this $\delta$, there exists $M_{V}>0$ such that for all $x\in\varLambda$ and $v\in T_{x}\mathcal{M}\cap V$, 
\begin{equation}\label{Rx-ineq21}
 \|R_{x}(v)-x-v\|\leq M_{V}\|v\|^2.
\end{equation}  
Let $B:={\rm cl}(\bigcup_{x\in\varLambda}R_{x}(V))\cup{\rm cl}(\varLambda+V)$. Obviously, $B\subset\mathcal{M}\cup{\rm cl}(\varLambda+V)\subset\mathcal{O}'$.
By the compactness of $\varLambda$ and the continuity of $R_{x}$ for each $x\in\mathcal{M}$, the set $B$ is a compact set in $\mathcal{O}'$. Since $f$ and $F$ are continuously differentiable on $\mathcal{O}'$, the functions $f$ and $\vartheta(F(\cdot))$ are Lipschitz continuous on the set $B$. Then, there exist constants $L_{f,B}>0$ and $L_{\vartheta,B}>0$ such that 
for all $x\in\varLambda$ and $v\in T_{x}\mathcal{M}\cap V$,
\begin{subequations}
\begin{align}\label{ineq-21a}
 &|f(R_{x}(v))-f(x+v)|\le L_{f,B}\|R_{x}(v)-x-\!v\|\le L_{f,B}M_{\varLambda,V}\|v\|^2,\\
 &|\vartheta(F(R_{x}(v)))-\vartheta(F(x+v))|\le L_{\vartheta,B}\|R_{x}(v)-x-\!v\|\le L_{\vartheta,B}M_{\varLambda,V}\|v\|^2, 
 \label{ineq-21b}
\end{align}
\end{subequations}
where the second inequality in \eqref{ineq-21a}-\eqref{ineq-21b} is obtained by using \eqref{Rx-ineq21}. Combining \eqref{ineq-21a}-\eqref{ineq-21b} with the definition of $\Theta$, the desired inequality holds with $\gamma=M_{\varLambda,V}(L_{f,B}+L_{\vartheta,B})$. 

\noindent
{\bf(ii)} Fix any $\epsilon>0$. According to Assumption \ref{ass0} {(i)}, there exists $\delta_1>0$ such that 
 \begin{equation*}
 |f(x)-f(x')|\le \big({\rm lip}\,f(\overline{x})+\epsilon\big)\|x-x'\|\quad\forall x,x'\in\mathbb{B}(\overline{x},\delta_1). 
 \end{equation*}
 From Assumption \ref{ass0} {(ii)-(iii)}, the function $\vartheta(F(\cdot))$ is locally Lipschitz continuous at $\overline{x}$, so there exists $\delta_2\in(0,\delta_1)$ such that for all $z,z'\in\mathbb{B}(\overline{x},\delta_2)$, 
 \begin{equation*}
  |\vartheta(F(z))-\vartheta(F(z'))|\le \big({\rm lip}\,\vartheta(F(\overline{x}))\!+\epsilon\big)\big({\rm lip}\,F(\overline{x})\!+\!\epsilon\big)\|z-z'\|.
 \end{equation*}
  From the continuity of $R_{\overline{x}}$, there exists $\varepsilon_{\overline{x}}\in(0,\delta_2)$ such that for all $v\in T_{\overline{x}}\mathcal{M}\cap\overline{\mathbb{B}}(0,\varepsilon_{\overline{x}})$, $R_{\overline{x}}(v)\in\mathbb{B}(\overline{x},\delta_2)$. Invoking Lemma \ref{lemma-retract} for $\delta=\varepsilon$, there exists a constant $M_{\overline{x}}>0$ such that for all $v\in T_{\overline{x}}\mathcal{M}\cap\overline{\mathbb{B}}(0,\varepsilon_{\overline{x}})$, $\|R_{\overline{x}}(v)-\overline{x}-v\|\leq M_{\overline{x}}\|v\|^2$. Then, for any $v\in T_{\overline{x}}\mathcal{M}\cap\overline{\mathbb{B}}(0,\varepsilon_{\overline{x}})$, 
  \begin{align*}
  &|f(R_{\overline{x}}(v))-f(\overline{x}+v)|\le({\rm lip}\,f(\overline{x})+\epsilon)\|R_{\overline{x}}(v)-\overline{x}\!-\!v\|\le({\rm lip}\,f(\overline{x})+\epsilon)M_{\overline{x}}\|v\|^2,\\
  &|\vartheta(F(R_{\overline{x}}(v)))-\vartheta(F(\overline{x}+v))|\le({\rm lip}\,\vartheta(F(\overline{x}))+\epsilon)({\rm lip}\,F(\overline{x})+\epsilon)M_{\overline{x}}\|v\|^2. 
 \end{align*}
The desired result follows from the last two inequalities and the arbitrariness of $\epsilon>0$. 
\end{proof}

From the proof of Proposition \ref{prop-retract} (ii), we can obtain the following corollary. 
\begin{corollary}\label{corollary-retract}
 Consider a compact set $\varLambda\subset\mathcal{M}$, a retraction $R$ of $\mathcal{M}$ and $\delta>0$. For any $\overline{x}\in\varLambda$, if $f$ and $F$ are continuously differentiable on the set $\Gamma_{\overline{x},\delta}:=\overline{\mathbb{B}}(\overline{x},\delta)\cup R_{\overline{x}}( T_{\overline{x}}\mathcal{M}\cap\overline{\mathbb{B}}(0,\delta))$, then there exist $M_{\delta}>0$ and $\alpha>M_{\delta}(L_{\vartheta,\overline{x}}L_{F,\overline{x}}+L_{\!f,\overline{x}})$ such that for all $v\in T_{\overline{x}}\mathcal{M}\cap\overline{\mathbb{B}}(0,\delta)$, 
 \[
 |\Theta(R_{\overline{x}}(v))-\Theta(\overline{x}+\!v)|\le \alpha\|v\|^2, 
 \]
 where $L_{F,\overline{x}}$ and $L_{\!f,\overline{x}}$ are the Lipschitz constant of $F$ and $f$ on the set $\Gamma_{\overline{x},\delta}$, respectively, and $L_{\vartheta,\overline{x}}$ is the Lipschitz constant of $\vartheta$ on the set $F(\Gamma_{\overline{x},\delta})$.
\end{corollary}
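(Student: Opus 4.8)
The plan is to re-run the argument of Proposition \ref{prop-retract} almost verbatim, but replacing the \emph{local} Lipschitz estimates valid in a small ball around $\overline{x}$ by \emph{uniform} ones valid on the whole set $\Gamma_{\overline{x},\delta}$, and replacing the small radius $\varepsilon$ used there by the prescribed $\delta$. The first thing I would record is that $\Gamma_{\overline{x},\delta}=\overline{\mathbb{B}}(\overline{x},\delta)\cup R_{\overline{x}}(\overline{\mathbb{B}}(0,\delta))$ is compact, being the union of a closed ball in a finite-dimensional space and the continuous image under $R_{\overline{x}}$ of the compact set $T_{\overline{x}}\mathcal{M}\cap\overline{\mathbb{B}}(0,\delta)$; hence $F(\Gamma_{\overline{x},\delta})$ is compact too. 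Consequently the hypothesis that $f$ and $F$ are $\mathcal{C}^{1,1}$ on $\Gamma_{\overline{x},\delta}$ yields finite Lipschitz constants $L_{f,\overline{x}}$ of $f$ and $L_{F,\overline{x}}$ of $F$ on $\Gamma_{\overline{x},\delta}$, while the convexity of $\vartheta$ (Assumption \ref{ass0}(ii)) gives a finite Lipschitz constant $L_{\vartheta,\overline{x}}$ of $\vartheta$ on $F(\Gamma_{\overline{x},\delta})$.

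Next I would apply Lemma \ref{lemma-retract} to $\varLambda$ with the radius $\delta$ to obtain a constant $M>0$ (depending only on $\varLambda,\delta,R$) such that $\|R_{\overline{x}}(v)-\overline{x}-v\|\le M\|v\|^2$ for every $\overline{x}\in\varLambda$ and $v\in T_{\overline{x}}\mathcal{M}\cap\overline{\mathbb{B}}(0,\delta)$. For any such $v$ one has $\overline{x}+v\in\overline{\mathbb{B}}(\overline{x},\delta)\subset\Gamma_{\overline{x},\delta}$ and $R_{\overline{x}}(v)\in R_{\overline{x}}(\overline{\mathbb{B}}(0,\delta))\subset\Gamma_{\overline{x},\delta}$, so both competitor points, and their images under $F$, lie precisely in the sets on which the above Lipschitz constants apply. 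Therefore $|f(R_{\overline{x}}(v))-f(\overline{x}+v)|\le L_{f,\overline{x}}\|R_{\overline{x}}(v)-\overline{x}-v\|\le L_{f,\overline{x}}M\|v\|^2$, and, using in addition that $F$ is $L_{F,\overline{x}}$-Lipschitz on $\Gamma_{\overline{x},\delta}$, $|\vartheta(F(R_{\overline{x}}(v)))-\vartheta(F(\overline{x}+v))|\le L_{\vartheta,\overline{x}}\|F(R_{\overline{x}}(v))-F(\overline{x}+v)\|\le L_{\vartheta,\overline{x}}L_{F,\overline{x}}M\|v\|^2$. Adding these two estimates gives $|\Theta(R_{\overline{x}}(v))-\Theta(\overline{x}+v)|\le M(L_{\vartheta,\overline{x}}L_{F,\overline{x}}+L_{f,\overline{x}})\|v\|^2$, and any $\alpha>2M(L_{\vartheta,\overline{x}}L_{F,\overline{x}}+L_{f,\overline{x}})$ then satisfies $\alpha/2\ge M(L_{\vartheta,\overline{x}}L_{F,\overline{x}}+L_{f,\overline{x}})$, which is exactly the claimed inequality $|\Theta(R_{\overline{x}}(v))-\Theta(\overline{x}+v)|\le(\alpha/2)\|v\|^2$.

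I do not expect a genuine obstacle here; the proof is essentially a bookkeeping reorganization of that of Proposition \ref{prop-retract}. The only point deserving a line of care is the observation that both $\overline{x}+v$ and $R_{\overline{x}}(v)$ stay inside $\Gamma_{\overline{x},\delta}$ for every admissible $v$, which is what lets a single pair of Lipschitz constants be used uniformly and removes the need to shrink $\delta$; this is immediate from the definition of $\Gamma_{\overline{x},\delta}$ as exactly that union of balls. If one wanted to be scrupulous about $f$, $F$ or $\vartheta$ being Lipschitz on a possibly non-convex compact set, one may instead invoke Lipschitz continuity on a slightly larger convex compact neighbourhood contained in $\mathcal{O}$, which does not affect the conclusion. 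Once the inclusion of the competitor points is noted, the displayed bounds are routine and the final choice of $\alpha$ is just a relabelling.
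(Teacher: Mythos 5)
Your proof is correct and follows essentially the same route as the paper, which itself only says the corollary "immediately" follows from the proof of Proposition~2.1: you replace the local Lipschitz bounds in a shrinking ball by the single uniform Lipschitz constants on the compact set $\Gamma_{\overline{x},\delta}$, and replace the shrunk radius $\varepsilon$ by $\delta$ using Lemma~\ref{lemma-retract}. The one subtlety you correctly flag—that $\Gamma_{\overline{x},\delta}$ need not be convex so $\mathcal{C}^{1,1}$ regularity should be read on a slightly larger convex compact neighbourhood—is the only point of care, and your handling of it is fine.
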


\subsection{Kurdyka-Lojasiewicz property}\label{sec2.4}

To recall the KL property of a proper lsc function $h:\mathbb{X}\to\overline{\mathbb{R}}$, for every $\varpi>0$, we denote $\Upsilon_{\!\varpi}$ by the set of continuous concave functions $\varphi\!:[0,\varpi)\to\mathbb{R}_{+}$ that are continuously differentiable on $(0,\varpi)$ with $\varphi(0)=0$ and $\varphi'(t)>0$ for all $t\in(0,\varpi)$.
\begin{definition}\label{def-KL}
 A proper lsc $h\!:\mathbb{X}\to\overline{\mathbb{R}}$ is said to have the KL property at $\overline{x}\in{\rm dom}\,\partial h$ if there exist $\delta>0,\varpi\in(0,\infty]$ and $\varphi\in\Upsilon_{\!\varpi}$ such that for all $x\in\overline{\mathbb{B}}(\overline{x},\delta)\cap[h(\overline{x})<h<h(\overline{x})+\varpi]$, 
 \[
	\varphi'(h(x)\!-\!h(\overline{x})){\rm dist}(0,\partial h(x))\ge 1;
 \]
 and it is said to have the KL property of exponent $q\in[0,1)$ at $\overline{x}$ if there exist $c>0,\,\delta>0$ and $\varpi\in(0,\infty]$ such that for all $x\in\overline{\mathbb{B}}(\overline{x},\delta)\cap[h(\overline{x})<h<h(\overline{x})+\varpi]$, 
 \[
	c(1\!-\!q)\,{\rm dist}(0,\partial h(x))\ge (h(x)\!-\!h(\overline{x}))^{q}.
 \]
 If $h$ has the KL property (of exponent $q$) at every point of ${\rm dom}\,\partial h$, it is called a KL function (of exponent $q$).
\end{definition}

As discussed in \cite[Section 4]{Attouch2010}, the KL property is ubiquitous and the functions definable in an o-minimal structure over the real field admit this property. By \cite[Lemma 2.1]{Attouch2010}, to demonstrate that a proper lsc function has the KL property (of exponent $q\in[0,1)$), it suffices to check if the property holds at its critical points.


To close this section, we take a look at the influence of the linearization of $F$ at a point on the composite function $\vartheta(F(\cdot))$; see Lemma \ref{local-lip1}. To this end, we define 
\begin{equation}\label{ellF}
 \ell_{F}(x;z):=F(z)+F'(z)(x-z)\quad\ \forall x\in\mathbb{X},z\in\mathcal{M}.
\end{equation}
Since the proof of Lemma \ref{local-lip1} is direct by using Assumption \ref{ass0}, we here omit it.   

\begin{lemma} \label{local-lip1}
Consider any $\overline{x}\in\mathcal{M}$. Then, there exists $\overline{\delta}>0$ such that for any $x\in\mathbb{B}(\overline{x},\overline{\delta})$ and any $\alpha>{\rm lip}\,\vartheta(F(\overline{x}))\,{\rm lip}\,F'(\overline{x})$, $|\vartheta(F(x))-\vartheta(\ell_{F}(x;\overline{x}))|\le\frac{\alpha}{2}\|x-\overline{x}\|^2$.
\end{lemma}

\section{Riemannian inexact VMPL method}\label{sec3}

First, we introduce the idea of the upcoming Riemannian inexact VMPL method. Let $x^k\in\mathcal{M}$ be the current iterate. With the linearization $\ell_{F}(\cdot;x^k)$ of $F$ at $x^k$, we follow the same line as in \cite{tao2023inexact} to construct a local majorization of $\Theta$ at $x^k$. From Lemma \ref{local-lip1} with $\overline{x}=x^k$, for any $x$ sufficiently close to $x^k$ and any $\alpha_{1,k}>{\rm lip}\,\vartheta(F(x^k))\,{\rm lip}\,F'(x^k)$, 
\begin{equation}\label{ineq30-ThetaF}
\vartheta(F(x))\le \vartheta(\ell_{F}(x;x^k))+\frac{\alpha_{1,k}}{2}\|x-x^k\|^2.
\end{equation}
According to Assumption \ref{ass0} (i) and the decent lemma (see \cite[Lemma 5.7]{Beck2017}), for any $x$ close enough to $x^k$ and any $\alpha_{2,k}>{\rm lip}\,\nabla\!f(x^k)$,
\[
f(x)\le f(x^k)+\langle\nabla\!f(x^k),x-x^k\rangle+\frac{\alpha_{2,k}}{2}\|x-x^k\|^2.
\]
Combining the above two inequalities with the expression of $\Theta$, for any $x$ sufficiently close to $x^k$ and any $L_k>\overline{L}_k:={\rm lip}\,\vartheta(F(x^k))\,{\rm lip}\,F'(x^k)+{\rm lip}\,\nabla\!f(x^k)$, we have
\begin{equation}\label{Lk}
 \Theta(x)\le f(x^k)+\langle\nabla\!f(x^k),x-x^k\rangle+\vartheta(\ell_{F}(x;x^k))+\frac{L_k}{2}\|x-x^k\|^2.
\end{equation}
Thus, for any linear mapping $\mathcal{Q}_k\!:\mathbb{X}\to\mathbb{X}$ with $\mathcal{Q}_k\succ\overline{L}_k\mathcal{I}$ and $x$ close enough to $x^k$, 
\begin{equation*}
	\Theta(x)\le\widehat{\Theta}_k(x):=f(x^k)+\langle\nabla\!f(x^k),x-x^k\rangle+\vartheta(\ell_{F}(x;x^k))+\frac{1}{2}\|x-x^k\|_{\mathcal{Q}_k}^2,
\end{equation*} 
which along with $\widehat{\Theta}_k(x^k)=\Theta(x^k)$ implies that $\widehat{\Theta}_k$ is a local majorization of $\Theta$ at $x^k$.  

Now choose a linear operator $\mathcal{Q}_k:\mathbb{X}\to\mathbb{X}$ with $\mathcal{Q}_k\succ\overline{L}_k\mathcal{I}$ for the subproblem \eqref{init-subprob}. Note that $\Theta_k(\cdot-x^k)=\widehat{\Theta}(\cdot)$, so the above discussion means that $\Theta_k(\cdot-x^k)$ is a local majorization of $\Theta$ at $x^k$. Inspired by this and Proposition \ref{prop-retract} (ii), our method first seeks a direction $v^k\in T_{x^k}\mathcal{M}$ satisfying a certain decrease by solving \eqref{init-subprob} inexactly, and then moves the iterate $x^k$ along the direction $v^k$ in a full step and retracts $x^k+v^k$ onto the manifold $\mathcal{M}$ with a retraction $R$. The resulting $R_{x^k}(v^k)$ serves as the new iterate. Consider that for some $f$ and $F$ the threshold $\overline{L}_k$ is usually unavailable. Our method at each iteration formulates a PD linear operator $\mathcal{Q}_k$ by searching for a tight upper estimation of $\overline{L}_k$ and meanwhile solves inexactly the associated problem \eqref{init-subprob}. Among others, an implementable criterion is proposed for the inexact solution $v^k\in T_{x^k}\mathcal{M}$ of \eqref{init-subprob}. The inexact solving of strongly convex subproblems, constructed with a proximal linearization of $\Theta$, forms the cornerstone of our method. This interprets why it is called a Riemannian inexact VMPL method. Its iteration steps are described as follows. 
\begin{algorithm}[h]
 \begin{algorithmic}[1]
 \caption{\label{iRVM}{\bf (RiVMPL method for problem \eqref{prob})}}
 \State Input: a retraction $R$, parameters $0<\alpha_{\rm min}\le\alpha_{\rm max},\,\mu_{\rm max}>0,\,\overline{\alpha}>0,\,\overline{\gamma}>0,\sigma>1$ \hspace*{1.0cm} and a starting point $x^0\in \mathcal{M}$. 
		
 \For{$k=0,1,2,\ldots$} 
 		
 \State Choose $\alpha_{k,0}\in[\alpha_{\rm min}, \alpha_{\rm max}]$ and $\mu_k\in(0,\mu_{\rm max}]$.
		
       \For {$j=0,1,2,\ldots$}
		
		\State {Choose a linear operator $\mathcal{Q}_{k,j}$ with $\alpha_{k,j}\mathcal{I}\preceq \mathcal{Q}_{k,j}\preceq (\overline{\alpha}+\alpha_{k,j})\mathcal{I}$. Compute
		\begin{equation}\label{subprobj}
		\qquad\min_{v\in T_{x^k}\mathcal{M}}\Theta_{k,j}(v)\!:=\langle\nabla\!f(x^k), v \rangle+\frac{1}{2}\|v\|_{\mathcal{Q}_{k,j}}^2+\vartheta(\ell_{F}(x^k\!+\!v;x^k))+f(x^k)
		\end{equation}
		\hspace*{0.92cm} to seek a pair $(v^{k,j},\Theta_{k,j}^{\rm LB})\in T_{x^k}\mathcal{M}\times(-\infty,\Theta_{k,j}(\overline{v}^{k,j})]$ such that
		\begin{equation}\label{inexact-cond}
		\Theta_{k,j}(v^{k,j})\leq \Theta_{k,j}(0)\ \ {\rm and}\ \ \Theta_{k,j}(v^{k,j})-\Theta_{k,j}^{\rm LB}\leq \frac{\mu_k}{2}\|v^{k,j}\|^2,
		\end{equation}
        \hspace*{1.0 cm}where $\overline{v}^{k,j}$ denotes the unique optimal solution of \eqref{subprobj}.  
		}
		\State If $\Theta(R_{x^k}(v^{k,j}))\le\! \Theta_{k,j}(v^{k,j})\!-\!\frac{\overline{\gamma}}{2}\|v^{k,j}\|^2$ or $\|v^{k,j}\|\!=0$, go to step 8; else set \hspace*{1.1cm}$\alpha_{k,j+1}:=\sigma\alpha_{k,j}$.		
		\EndFor
        
 \State If $\|v^{k,j}\|=0$, stop; else set $j_k:=j,v^k:=v^{k,j_k},x^{k+1}:=R_{x^k}(v^{k,j_k}),\overline{v}^k:=\overline{v}^{k,j_k}$,  
 \hspace*{0.4cm} $\mathcal{Q}_k:=\mathcal{Q}_{k,j_k}$ and $\Theta_k:=\Theta_{k,j_k}$.
 \EndFor
\end{algorithmic}
\end{algorithm}
\begin{remark}\label{remark-alg}
{\bf(a)} By Lemma \ref{well-def} later, Algorithm \ref{iRVM} is well defined, i.e., the inner for-end loop stops within a finite number of steps. The inner loop aims at capturing a tight upper estimation of $\overline{L}_k$ to formulate the PD linear operator $\mathcal{Q}_{k}$ and seeking an inexact minimizer of the associated subproblem \eqref{subprobj} with a certain decrease. As will be seen in Section \ref{sec6}, the linear operator $\mathcal{Q}_{k,j}$ allows us to incorporate the first-order information of $F$ at $x^k$ into the term $\frac{1}{2}\|v\|_{\mathcal{Q}_{k,j}}^2$, which can potentially accelerate the computation of subproblems. The subproblems of ManPL \cite{Wang2022} involve a proximal term $\frac{1}{2t}\|v\|^2$ for a constant $t>0$. When $\overline{L}_k$ is unavailable, a big gap between $t^{-1}$ and $\overline{L}_k$ may occur, which will lead to a bad direction. 
	
\noindent 
{\bf(b)} The inexactness criterion in \eqref{inexact-cond} consists of two conditions. The first one aims to ensure that $v^{k,j}$ for large enough $j$ are descent directions of the objective function $\Theta$, by recalling that  $\Theta(x^k\!+\!v^{k,j})\le\Theta_{k,j}(v^{k,j})$ for such $j$ and $\Theta_{k,j}(0)=\Theta(x^k)$. The second one is intended to control the inexactness tolerance of $v^{k,j}$ since it implies $\Theta_{k,j}(v^{k,j})-\Theta_{k,j}(\overline{v}^{k,j})\le\frac{\mu_k}{2}\|v^{k,j}\|^2$. By the strong duality for strongly convex programs, a dual or primal-dual method is especially adapted to seek a pair $(v^{k,j},\Theta_{k,j}^{\rm LB})\in T_{x^k}\mathcal{M}\times(-\infty,\Theta(\overline{v}^{k,j})]$ satisfying \eqref{inexact-cond}. The details can be found in Section \ref{sec6}. Consequently, the proposed inexactness criterion is easily implementable. 
	
\noindent
 {\bf(c)} For some $k\in\mathbb{N}$ and $j\in[j_k]$, $v^{k,j}=0$ if and only if $\overline{v}^{k,j}=0$, which by Remark \ref{remark-spoint} (a) means that  $x^{k,j}$ is now a stationary point of \eqref{prob}. Indeed, $v^{k,j}=0$ implies $\overline{v}^{k,j}=0$ by Proposition \ref{prop-xk} (i) later; conversely, if $\overline{v}^{k,j}=0$, the first inequality in \eqref{inexact-cond} becomes $\Theta_{k,j}(v^{k,j})\le \Theta_{k,j}(\overline{v}^{k,j})$, so $v^{k,j}=0$ follows by the strong convexity of $\Theta_{k,j}$ and the optimality of $\overline{v}_{k,j}$. This explains why Algorithm \ref{iRVM} stops when $\|v^{k,j}\|=0$ for some $k\in\mathbb{N}$ and $j\in[j_k]$.
 
 \noindent
 {\bf(d)} For each $k\in\mathbb{N}$ and $j\in[j_k]$, from the optimality of $\overline{v}^{k,j}$ to the subproblem \eqref{subprobj}, we have
\begin{equation}\label{optim-cond}
0\in\nabla\!f(x^k)+\mathcal{Q}_{k,j}\overline{v}^{k,j}+\nabla F(x^k)\partial\vartheta(\ell_F(x^k\!+\overline{v}^{k,j};x^k))+N_{x^k}\mathcal{M}.
\end{equation}
This optimality condition will be used in the subsequent complexity and convergence analysis. 
\end{remark}

\begin{lemma}\label{well-def}
 Suppose that $v^{k,j}\ne 0$ for all $k\in\mathbb{N}$ and $j\in\mathbb{N}$. The following assertions hold.
 \begin{description}
 \item[(i)] For each $k\in\mathbb{N}$ and $j\in\mathbb{N}$, any $(u,\Theta_{k,j}^{\rm LB})\in T_{x^k}\mathcal{M}\times(-\infty,\Theta_{k,j}(\overline{v}^{k,j})]$ sufficiently close to $(\overline{v}^{k,j},\Theta_{k,j}(\overline{v}^{k,j}))$ satisfies the criterion \eqref{inexact-cond}.  

 \item[(ii)] For each $k\in\mathbb{N}$, the inner for-end loop stops within a finite number of steps.
 \end{description}	
\end{lemma}
\begin{proof}
{\bf(i)} Fix any $k,j\in\mathbb{N}$. By Remark \ref{remark-alg} (c), $\overline{v}^{k,j}\ne 0$. The strong convexity of $\Theta_{k,j}$ implies that $\Theta_{k,j}(\overline{v}^{k,j})<\Theta_{k,j}(0)$, so for any $v\in T_{x^k}\mathcal{M}$ close enough to $\overline{v}^{k,j}$, $\Theta_{k,j}(v)<\Theta_{k,j}(0)$ follows the continuity of $\Theta_{k,j}$. In addition, consider the function $h_{k,j}(v):=\Theta_{k,j}(v)-\frac{\mu_k}{2}\|v\|^2$ for $v\in\mathbb{X}$. Then, for those $\Theta_{k,j}^{\rm LB}\in[\frac{\mu_k}{4}\|\overline{v}^{k,j}\|^2\!-\Theta_{k,j}(\overline{v}^{k,j}),\Theta_{k,j}(\overline{v}^{k,j})]$, 
it holds
\[
 h_{k,j}(\overline{v}^{k,j})-\Theta_{k,j}^{\rm LB}\le h_{k,j}(\overline{v}^{k,j})-\Theta_{k,j}(\overline{v}^{k,j})+\frac{\mu_k}{4}\|\overline{v}^{k,j}\|^2=-\frac{\mu_k}{4}\|\overline{v}^{k,j}\|^2<0.
 \]
 Then the continuity of $h_{k,j}$ implies $h_{k,j}(v)-\Theta_{k,j}^{\rm LB}<0$ for any $v\in T_{x^k}\mathcal{M}$ close enough to $\overline{v}^{k,j}$.  
 
 \noindent
 {\bf(ii)} Suppose on the contrary that the conclusion does not hold for some $k\in\mathbb{N}$. From step 6 of Algorithm \ref{iRVM} and $\sigma>1$, it follows that $\lim_{j\to\infty}\alpha_{k,j}=\infty$ and for all $j\in\mathbb{N}$,
 \begin{equation}\label{aim-ineq31}
 \Theta(R_{x^k}(v^{k,j}))>\Theta_{k,j}(v^{k,j})-\frac{\overline{\gamma}}{2}\|v^{k,j}\|^2.
 \end{equation}
 For each $j\in\mathbb{N}$, according to the inexactness criterion for $v^{k,j}$, we immediately obtain
 \begin{equation*}
 \Theta_{k,j}(0)\ge\Theta_{k,j}(v^{k,j})=\langle\nabla\!f(x^{k}), v^{k,j} \rangle+\dfrac{1}{2}\|v^{k,j}\|_{\mathcal{Q}_{k,j}}^2+\vartheta(\ell_{F}(x^{k}\!+\!v^{k,j};x^{k}))+f(x^{k}).
 \end{equation*}
 Note that $\vartheta$ is bounded from below by an affine function due to Assumption \ref{ass0} (ii). The above inequality, along with $\Theta_{k,j}(0)<\infty,\mathcal{Q}_{k,j}\succeq\alpha_{k,j}\mathcal{I}$ and $\lim_{j\to\infty}\alpha_{k,j}=\infty$, implies that $\lim_{j\to{\infty}}v^{k,j}=0$. By \eqref{Lk}, there exist $L_{k}>\overline{L}_k$ and some $\overline{j}\in\mathbb{N}$ such that for each $j\ge\overline{j}$,
 \begin{align*}
  \Theta(x^{k}\!+\!v^{k,j})&\le f(x^{k})+\langle\nabla\!f(x^{k}),v^{k,j}\rangle+\vartheta(\ell_{F}(x^{k}\!+\!v^{k,j};x^{k}))+\frac{1}{2}L_k\|v^{k,j}\|^2\\
  &\le \Theta_{k,j}(v^{k,j})+\frac{1}{2}(L_{k}- \alpha_{k,j})\|v^{k,j}\|^2.
 \end{align*}
 In addition, from step 5 of Algorithm \ref{iRVM}, $v^{k,j}\in T_{x^k}\mathcal{M}$ for all $j\in\mathbb{N}$. Since $x^k\in\mathcal{M}\subset\mathcal{O}$, there exists $\delta_k>0$ such that $\overline{\mathbb{B}}(x^k,\delta_k)\subset\mathcal{O}$. Then, applying Proposition \ref{prop-retract} (i) with $\varLambda=\{x^k\}$, $V=\overline{\mathbb{B}}(0,\delta_k)$ and $\mathcal{O}'=\mathcal{O}$ and using $\lim_{j\to{\infty}}v^{k,j}=0$, there exists $\widehat{\alpha}_{k}>0$ such that for all $j\ge\overline{j}$ (if necessary by increasing $\overline{j}$),
 \[
  \Theta(R_{x^{k}}(v^{k,j}))
  \le\Theta(x^{k}+v^{k,j})+\frac{\widehat{\alpha}_{k}}{2}\|v^{k,j}\|^2. 
 \]
 Combining the above two inequalities with \eqref{aim-ineq31} yields $(\alpha_{k,j}-L_{k}-\widehat{\alpha}_{k}-\overline{\gamma})\|v^{k,j}\|^2< 0$ for all $j\ge\overline{j}$, which is impossible by recalling that $\lim_{j\to\infty}\alpha_{k,j}=\infty$. The proof is completed.
\end{proof} 

Lemma \ref{well-def} (ii) states that, as long as the current iterate $x^k$ is not a stationary point, the inner loop of Algorithm \ref{iRVM} necessarily stops within a finite number of steps. The specific number of steps can be quantified when $f$ and $F$ are $\mathcal{C}^{1,1}$ on a larger set. 
\begin{lemma}\label{step-num}
 Fix any $k\in\mathbb{N}$ for $v^k\ne 0$. Suppose that  $f$ and $F$ are $\mathcal{C}^{1,1}$ on an open convex set containing $\mathcal{M}\cup\overline{\mathbb{B}}(x^k,\delta_k)$ with $\delta_k:=\max_{z\in\mathcal{X}_k}\|z-x^k\|$. Then, the inner loop stops once
 \begin{equation}\label{jineq}
  j>\big\lceil(\log\sigma)^{-1}\log[\alpha_{k,0}^{-1}(\overline{\gamma}+L_{\vartheta,k}(L_{\nabla F, k}\!+\!2M_kL_{F,k})+L_{\nabla\!f,k}(1\!+\!2M_{k}))]\big\rceil,
 \end{equation}  
 where $L_{f,k}, L_{\nabla\!f,k}, L_{F,k}$ and $L_{\nabla\!F, k}$ are respectively the Lipschitz constant of $f,\nabla\!f,F$ and $\nabla F$ on  $\widehat{\mathcal{X}}_k\!:=\overline{\mathbb{B}}(x^k,\delta_k)\cup R_{x^k}(\mathcal{X}_k\!-\!x^k)$ with $\mathcal{X}_k\!:=x^k+\{v\in\mathbb{X}\ |\ v\in T_{x^k}\mathcal{M},q_{k}(v)\leq q_k(0)\}$, where 
 \[
	q_k(v):=\langle\nabla\!f(x^k), v \rangle+\frac{\alpha_{\min}}{2}\|v\|^2+\vartheta(\ell_{F}(x^k\!+v;x^k))+f(x^k), 
 \]
 and $L_{\vartheta,k}$ is the Lipschitz constant of $\vartheta$ on the set $F(\widehat{\mathcal{X}}_k)$, and $M_{k}$ is the constant from Corollary \ref{corollary-retract} with $\varLambda=\{x^k\},\overline{x}=x^k$ and $\delta=\delta_k$. 
\end{lemma}
\begin{proof}
Since the function $q_k$ is coercive, the set $\mathcal{X}_k$ is compact by its definition, so is the set $\widehat{\mathcal{X}}_k$ by the continuity of $R_{x^k}$. Since $R_{x^k}(\mathcal{X}_k\!-\!x^k)\subset\mathcal{M}$, the given assumption on $f$ and $F$ imply that $f,\nabla\!f,F$ and $\nabla F$ are Lipshitz continuous on $\widehat{\mathcal{X}}_k$, while $\vartheta$ is Lipschitz continuous on $F(\widehat{\mathcal{X}}_k)$. Then, the constants $L_{f,k},L_{\nabla\!f,k},L_{F,k},L_{\nabla F, k}$ and $L_{\vartheta,k}$ are well defined. Note that $x^k+v^{k,j}\in\mathcal{X}_k\subset\widehat{\mathcal{X}}_k$ for all $j\in\mathbb{N}$ by the iteration of Algorithm \ref{iRVM}. From the Lipschitz continuity of $\vartheta(F(\cdot))$ on $\widehat{\mathcal{X}}_k$, similar to the conclusion of Lemma \ref{local-lip1}, for each $j\in\mathbb{N}$, we have 
\begin{equation}\label{vtheta-ineq31}
 \vartheta(F(x^k\!+v^{k,j}))\le \vartheta(\ell_{F}(x^k\!+v^{k,j};x^k))+\frac{1}{2}L_{\vartheta,k}L_{\nabla F, k}\|v^{k,j}\|^2. 
\end{equation}
By the definition of $\Theta$ and the descent lemma for $f$ (see \cite[Theorem 5.7]{Beck2017}), for each $j\in\mathbb{N}$,  
\begin{align*} 
\Theta(x^k\!+v^{k,j})
 &\le f(x^k)+\langle\nabla\!f(x^k),v^{k,j}\rangle+\frac{1}{2}L_{\nabla\!f,k}\|v^{k,j}\|^2+\vartheta(F(x^k\!+v^{k,j}))\\ 
 &\le f(x^k)+\langle\nabla\!f(x^k),v^{k,j}\rangle+\!\vartheta(\ell_{F}(x^k\!+v^{k,j};x^k))+\!\frac{L_{\vartheta,k}L_{\nabla F, k}\!+\!L_{\nabla\!f,k}}{2}\|v^{k,j}\|^2\\
 &\le\Theta_{k,j}(v^{k,j})+\!\frac{1}{2}(L_{\vartheta,k}L_{\nabla F, k}\!+\!L_{\nabla\!f,k})\|v^{k,j}\|^2-\!\frac{1}{2}\sigma^j\alpha_{k,0}\|v^{k,j}\|^2,
 \end{align*}
 where the second inequality follows from \eqref{vtheta-ineq31} and the third inequality is due to the expression of $\Theta_{k,j}$ and $\mathcal{Q}_{k,j}\succeq\alpha_{k,j}\mathcal{I}$. Notice that $v^{k,j}\in T_{x^k}\mathcal{M}\cap \overline{\mathbb{B}}(0,\delta_k)$. From Corollary \ref{corollary-retract} with $\varLambda=\{x^k\},\overline{x}=x^k$ and $\delta=\delta_k$, it follows
 \begin{equation*}
  \Theta(R_{x^{k}}(v^{k,j}))
	\le\Theta(x^{k}\!+\!v^{k,j})+M_{k}(L_{\vartheta,k}L_{F,k}+L_{\nabla\!f,k})\|v^{k,j}\|^2\quad\forall j\in\mathbb{N}.
 \end{equation*}
 Putting the above two inequalities together, for each $j\in\mathbb{N}$, it holds 
 \begin{equation*}
 \Theta(R_{x^{k}}(v^{k,j}))\leq \Theta_{k,j}(v^{k,j})-\frac{\sigma^j\alpha_{k,0}-[L_{\vartheta,k}(L_{\nabla F, k}\!+\!2M_kL_{F,k})+L_{\nabla\!f,k}(1\!+\!2M_{k})]}{2}\|v^{k,j}\|^2.
 \end{equation*}
 When $j$ satisfies \eqref{jineq}, $\Theta(R_{x^k}(v^{k,j}))\leq \Theta_{k,j}(v^{k,j})-\frac{\overline{\gamma}}{2}\|v^{k,j}\|^2$, i.e., the inner loop stops. 
\end{proof}

Now we present useful properties of the sequences $\{(v^k,\overline{v}^k)\}_{k\in\mathbb{N}}$ and $\{\Theta(x^k)\}_{k\in\mathbb{N}}$. 
\begin{proposition}\label{prop-xk}
 Let $\{(x^k,v^k,\overline{v}^k)\}_{k\in \mathbb{N}}$ be the sequence generated by Algorithm \ref{iRVM}. Then,
 \begin{description}
  \item[{\bf(i)}] for each $k\in\mathbb{N}$ and $j\in[j_k]$, $\|v^{k,j}-\overline{v}^{k,j}\|\le(\alpha_{\rm min}^{-1}\mu_{\rm max})^{1/2}\|v^{k,j}\|$;

  \item[{\bf(ii)}] for each $k\in \mathbb{N}$, $\Theta(x^{k+1})\le\Theta(x^k)-\frac{\overline{\gamma}}{2}\|v^k\|^2$, so the sequence $\{\Theta(x^k)\}_{k\in\mathbb{N}}$ is convergent and denote its limit as $\varsigma^*$;

  \item[{\bf(iii)}] $\lim_{k\to\infty} v^k=0$ and $\lim_{k\to\infty}\overline{v}^k=0$. 
 \end{description}	
\end{proposition}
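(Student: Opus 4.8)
The plan is to establish the three assertions in order, since each builds on the previous one. For item (i), I would unpack the second inequality in the inexactness criterion \eqref{inexact-cond}: since $\overline{v}^{k,j}$ is the minimizer of the strongly convex function $\Theta_{k,j}$, whose modulus of strong convexity is at least $\alpha_{k,j}\ge\alpha_{\rm min}$ (because $\mathcal{Q}_{k,j}\succeq\alpha_{k,j}\mathcal{I}$ and $\alpha_{k,j}\ge\alpha_{\min}$), the standard inequality for strongly convex functions gives $\Theta_{k,j}(v^{k,j})-\Theta_{k,j}(\overline{v}^{k,j})\ge\frac{\alpha_{\rm min}}{2}\|v^{k,j}-\overline{v}^{k,j}\|^2$. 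This is presumably the inequality referred to as \eqref{ineq-sconvex} later in the paper. Combining with $\Theta_{k,j}(v^{k,j})-\Theta_{k,j}(\overline{v}^{k,j})\le(\mu_k/2)\|v^{k,j}\|^2\le(\mu_{\rm max}/2)\|v^{k,j}\|^2$ yields $\frac{\alpha_{\rm min}}{2}\|v^{k,j}-\overline{v}^{k,j}\|^2\le\frac{\mu_{\rm max}}{2}\|v^{k,j}\|^2$, which rearranges to the claimed bound. The case $v^k=0$ (handled by Remark \ref{remark-alg} (c)) makes the bound trivially hold.

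For item (ii), I would use the termination test of the inner loop. When the inner loop exits at $j=j_k$, either $\|v^{k,j_k}\|=0$ (in which case the algorithm stops and there is nothing to prove, or $x^{k+1}=R_{x^k}(0)=x^k$ and the inequality holds trivially) or the acceptance condition $\Theta(R_{x^k}(v^{k,j_k}))\le\Theta_{k,j_k}(v^{k,j_k})-(\overline{\gamma}/2)\|v^{k,j_k}\|^2$ holds. Since $x^{k+1}=R_{x^k}(v^{k,j_k})=R_{x^k}(v^k)$ and $\Theta_k=\Theta_{k,j_k}$, this reads $\Theta(x^{k+1})\le\Theta_k(v^k)-(\overline{\gamma}/2)\|v^k\|^2$. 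The first inequality in the inexactness criterion \eqref{inexact-cond}, namely $\Theta_{k,j_k}(v^{k,j_k})\le\Theta_{k,j_k}(0)$, together with $\Theta_{k,j_k}(0)=f(x^k)+\vartheta(F(x^k))=\Theta(x^k)$ (direct from the definition of $\Theta_{k,j}$ in \eqref{subprobj}, since $\ell_F(x^k;x^k)=F(x^k)$), then gives $\Theta_k(v^k)\le\Theta(x^k)$, and the chain closes: $\Theta(x^{k+1})\le\Theta(x^k)-(\overline{\gamma}/2)\|v^k\|^2$. Thus $\{\Theta(x^k)\}_{k\in\mathbb{N}}$ is nonincreasing, and it is bounded below by $\inf_{x\in\mathcal{M}}\Theta(x)>-\infty$ from Assumption \ref{ass0} (iv) (note each $x^k\in\mathcal{M}$), so it converges to some limit $\varsigma^*$.

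For item (iii), summing the descent inequality from item (ii) over $k=0,1,\ldots,K$ gives $(\overline{\gamma}/2)\sum_{k=0}^{K}\|v^k\|^2\le\Theta(x^0)-\Theta(x^{K+1})\le\Theta(x^0)-\varsigma^*<\infty$. Letting $K\to\infty$, the series $\sum_{k=0}^\infty\|v^k\|^2$ converges, hence $\|v^k\|\to 0$. Then $\overline{v}^k\to 0$ follows from item (i): $\|\overline{v}^k\|\le\|v^k-\overline{v}^k\|+\|v^k\|\le\big(1+(\alpha_{\rm min}^{-1}\mu_{\rm max})^{1/2}\big)\|v^k\|\to 0$. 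The main (mild) obstacle is keeping track of the bookkeeping between the inner-loop index $j$ and the accepted index $j_k$, and being careful that the strong-convexity estimate in item (i) uses the correct modulus $\alpha_{k,j}$ rather than an upper bound; otherwise the argument is a direct chaining of the inexactness criterion \eqref{inexact-cond}, the inner-loop acceptance rule, and a telescoping sum, with no genuine difficulty.
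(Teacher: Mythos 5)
Your proof is correct and takes essentially the same approach as the paper. The only stylistic difference is in item (i): you package the optimality-plus-strong-convexity argument as the standard descent inequality over the linear subspace $T_{x^k}\mathcal{M}$ (implicitly using that both $v^{k,j}$ and $\overline{v}^{k,j}$ lie in the tangent space so the normal-cone contribution vanishes), whereas the paper extracts $\xi^{k,j}\in N_{x^k}\mathcal{M}$ from the first-order condition $0\in\partial\Theta_{k,j}(\overline{v}^{k,j})+N_{x^k}\mathcal{M}$ and cancels the cross term $\langle-\xi^{k,j},v^{k,j}-\overline{v}^{k,j}\rangle$ explicitly before invoking $\mathcal{Q}_{k,j}\succeq\alpha_{\min}\mathcal{I}$.
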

\begin{proof}
 {\bf(i)} 
 Fix any $k\in\mathbb{N}$ and $j\in[j_k]$. Note that $0\in\partial(\Theta_{k,j}+\delta_{T_{x^{k}}\mathcal{M}})(\overline{v}^{k,j})=\partial\Theta_{k,j}(\overline{v}^{k,j})+N_{x^{k}}\mathcal{M}$, so there exists $\xi^{k,j}\in\partial\Theta_{k,j}(\overline{v}^{k,j})\cap N_{x^{k}}\mathcal{M}$. Together with $v^{k,j},\overline{v}^{k,j}\in T_{x^{k}}\mathcal{M}$, the strong convexity of  $\Theta_{k,j}+\delta_{T_{x^{k}}\mathcal{M}}$ and the second inequality in \eqref{inexact-cond}, it follows
 \begin{align*}\label{ineq-sconvex}
 \frac{1}{2}\|v^{k,j}-\overline{v}^{k,j}\|^2_{\mathcal{Q}_{k,j}}&=\langle\xi^{k,j},v^{k,j}-\overline{v}^{k,j}\rangle+\frac{1}{2}\|v^{k,j}-\overline{v}^{k,j}\|^2_{\mathcal{Q}_{k,j}}\\
 &\le \Theta_{k,j}(v^{k,j})-\Theta_{k,j}(\overline{v}^{k,j})\le\Theta_{k,j}(v^{k,j})-\Theta_{k,j}^{\rm LB}\leq\frac{1}{2}\mu_{\rm max}\|v^{k,j}\|^2.
 \end{align*} 
 The desired result follows from the last inequality by noting that $\mathcal{Q}_{k,j}\succeq\alpha_{\rm min}\mathcal{I}$. 
	
 \noindent
 {\bf(ii)-(iii)} From steps 6 and 8 of Algorithm \ref{iRVM}, for each $k\in\mathbb{N}$, it holds
 \begin{equation*}
 \Theta(x^{k+1})=\Theta(R_{x^k}(v^{k,j_k}))\leq \Theta_k(v^{k,j_k})-\frac{\overline{\gamma}}{2}\|v^{k,j_k}\|^2,
\end{equation*}
 which along with $\Theta_k(v^{k,j_k})\leq \Theta_k(0)=\Theta(x^k)$ by the first inequality of  \eqref{inexact-cond} implies that
 \begin{equation*}
 \Theta(x^{k+1})\le\Theta(x^k) -\frac{\overline{\gamma}}{2}\|v^{k,j_k}\|^2=\Theta(x^k)-\frac{\overline{\gamma}}{2}\|v^k\|^2\quad\forall k\in\mathbb{N}.
 \end{equation*}
 This shows that $\{\Theta(x^k)\}_{k\in\mathbb{N}}$ is nonincreasing, so is convergent by Assumption \ref{ass0} (iv). Then, the last inequality implies that $\lim_{k\to\infty} v^k=0$, and $\lim_{k\to\infty}\overline v^k=0$ holds by item (i). 
\end{proof}

In the next two sections, we establish the iteration complexity for Algorithm \ref{iRVM} and analyze the full convergence of its iterate sequence  under the following assumption: 
\begin{assumption}\label{ass1}
 The sequence $\{x^k\}_{k\in \mathbb{N}}\subset\mathcal{M}$ is bounded, so the closedness of $\mathcal{M}$ implies the existence of a compact set $\varLambda_0\subset\mathcal{M}$ such that $\{x^k\}_{k\in \mathbb{N}}\subset\varLambda_0$. 
\end{assumption}
Assumption \ref{ass1} is rather mild and automatically holds if the manifold $\mathcal{M}$ is compact. It is implied by the boundedness on the level set $\{x\in\mathcal{M}\ |\ \Theta(x)\le\Theta(x^0)\}$, a common assumption in the literature on manifold optimization (see, e.g., \cite{Huang2022a,Huang2023}).
\begin{proposition}\label{bound-alpk}
 Under Assumption \ref{ass1}, the following assertions hold. 
\begin{description}

\item[{\bf(i)}] There exists $b_{v}>0$ such that for all $k\in\mathbb{N}$ and $j\in[j_k]$, $\max\{\|v^{k,j}\|,\|\overline{v}^{k,j}\|\}\le b_{v}$.

\item[{\bf(ii)}] There is $b_{\alpha}>0$ such that for all $k$ and $j\in[j_k]$, $\alpha_{k,j}\le b_{\alpha}$, so $\|\mathcal{Q}_{k,j}\|\le\alpha^*\!:= b_{\alpha}+\overline{\alpha}$. 
\end{description}	
\end{proposition}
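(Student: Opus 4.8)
Throughout, I use Proposition~\ref{prop-xk}(ii), which says $\{\Theta(x^k)\}_{k\in\mathbb{N}}$ is nonincreasing, so that $\{x^k\}_{k\in\mathbb{N}}\subset\mathcal{L}_{\Theta}(x^0)$ and $\Theta(x^k)\le\Theta(x^0)$ for all $k$. The level set $\mathcal{L}_{\Theta}(x^0)$ is closed (as $\mathcal{M}$ is closed and $\Theta$ is continuous on $\mathcal{M}$) and, by Assumption~\ref{ass1}, bounded; hence $\varLambda:=\mathcal{L}_{\Theta}(x^0)$ is a compact subset of $\mathcal{M}$ containing every iterate, and item (i) is then immediate from the Bolzano--Weierstrass theorem. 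All constants constructed below are extracted from $\varLambda$ together with the input parameters $\alpha_{\rm min},\alpha_{\rm max},\overline{\alpha},\mu_{\rm max},\overline{\gamma},\sigma$, hence are independent of $k$ and $j$.

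For item (ii), I would derive a uniform quadratic minorant of the subproblem objective. Since $\vartheta$ is a finite convex function on $\mathbb{Z}$, it has an affine minorant $\vartheta(\cdot)\ge\langle a,\cdot\rangle+b$; substituting $\ell_{F}(x^k\!+\!v;x^k)=F(x^k)+F'(x^k)v$ and using $\mathcal{Q}_{k,j}\succeq\alpha_{\rm min}\mathcal{I}$ gives, for every $v\in\mathbb{X}$,
\[
\Theta_{k,j}(v)\ \ge\ \tfrac{\alpha_{\rm min}}{2}\|v\|^2-\big\|\nabla\!f(x^k)+\nabla\!F(x^k)a\big\|\,\|v\|+\langle a,F(x^k)\rangle+b+f(x^k).
\]
By continuity of $\nabla\!f,\nabla\!F,F,f$ on the compact set $\varLambda$, the linear coefficient and the constant term are bounded uniformly, so $\Theta_{k,j}(v)\ge\tfrac{\alpha_{\rm min}}{2}\|v\|^2-c_1\|v\|-c_0$ for some $c_0,c_1>0$. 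Evaluating at $v^{k,j}$ and using $\Theta_{k,j}(v^{k,j})\le\Theta_{k,j}(0)=\Theta(x^k)\le\Theta(x^0)$ from \eqref{inexact-cond}, one obtains a quadratic inequality in $\|v^{k,j}\|$ whose larger root $b_1$ bounds $\|v^{k,j}\|$ uniformly; Proposition~\ref{prop-xk}(i) then yields $\|\overline{v}^{k,j}\|\le(1+(\alpha_{\rm min}^{-1}\mu_{\rm max})^{1/2})\|v^{k,j}\|$, so $b_v:=(1+(\alpha_{\rm min}^{-1}\mu_{\rm max})^{1/2})b_1$ works.

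For item (iii), I would sharpen the previous estimate by using $\mathcal{Q}_{k,j}\succeq\alpha_{k,j}\mathcal{I}$ instead of $\alpha_{\rm min}\mathcal{I}$, which gives $\tfrac{\alpha_{k,j}}{2}\|v^{k,j}\|^2\le\Theta(x^0)+c_0+c_1b_v=:C$, hence $\|v^{k,j}\|^2\le 2C/\alpha_{k,j}$: the directions shrink to zero \emph{uniformly} as $\alpha_{k,j}$ grows. Since $\mathcal{O}$ is open and convex with $\varLambda\subset\mathcal{M}\subset\mathcal{O}$ compact, fix a compact convex $\mathcal{D}$ with $\varLambda$ in its interior and $\mathcal{D}\subset\mathcal{O}$, and let $L_{\nabla\!f},L_{\nabla F},L_f,L_F$ be Lipschitz moduli of $\nabla\!f,\nabla F,f,F$ on $\mathcal{D}$ and $L_\vartheta$ that of $\vartheta$ on $F(\mathcal{D})$; by Lemma~\ref{lemma-retract} (with $\delta=b_v$) there are uniform $M_1,M_2>0$ with $\|R_{x^k}(v)-x^k\|\le M_1\|v\|$ and $\|R_{x^k}(v)-x^k-v\|\le M_2\|v\|^2$ for $v\in T_{x^k}\mathcal{M}\cap\overline{\mathbb{B}}(0,b_v)$. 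Then there is a uniform $\widehat\alpha>0$ such that $\alpha_{k,j}\ge\widehat\alpha$ makes $\|v^{k,j}\|$ small enough that $x^k\!+\!v^{k,j}$ and $R_{x^k}(v^{k,j})$ both lie in $\mathcal{D}$; for such $k,j$, the descent lemma for $f$ on $\mathcal{D}$, the argument of Lemma~\ref{local-lip1} (with the uniform moduli $L_\vartheta,L_{\nabla F}$ replacing the pointwise ones), and the estimate $|\Theta(R_{x^k}(v^{k,j}))-\Theta(x^k\!+\!v^{k,j})|\le(L_f+L_\vartheta L_F)M_2\|v^{k,j}\|^2$ combine into
\[
\Theta(R_{x^k}(v^{k,j}))\ \le\ \Theta_{k,j}(v^{k,j})+\tfrac12\big[L_{\nabla\!f}+L_\vartheta L_{\nabla F}+2M_2(L_f+L_\vartheta L_F)-\alpha_{k,j}\big]\|v^{k,j}\|^2.
\]
Writing $C'$ for the $k$-independent bracketed constant without the $-\alpha_{k,j}$ term, once $\alpha_{k,j}\ge A:=\max\{\widehat\alpha,C'+\overline{\gamma}\}$ (and $v^{k,j}\ne0$) the acceptance condition of the inner loop of Algorithm~\ref{iRVM} is met, so the loop stops; hence $\alpha_{k,j}<A$ for all $j<j_k$, and since each failed step multiplies $\alpha$ by $\sigma$ from $\alpha_{k,0}\le\alpha_{\rm max}$, we get $\alpha_{k,j}\le\alpha_{k,j_k}\le\max\{\alpha_{\rm max},\sigma A\}=:b_\alpha$ for all $j\in[j_k]$; finally $\mathcal{Q}_{k,j}\preceq(\overline{\alpha}+\alpha_{k,j})\mathcal{I}$ gives $\|\mathcal{Q}_{k,j}\|\le\overline{\alpha}+b_\alpha=:\alpha^*$.

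The main obstacle is the uniformity over $k$ in item (iii): Lemma~\ref{step-num} cannot be invoked directly, because its step count depends on the $k$-dependent set $\mathcal{X}_k$ and on Lipschitz moduli over $k$-dependent enlargements, and because $x^k\!+\!v^{k,j}$ need not lie on $\mathcal{M}$ while $f,F$ are only assumed $\mathcal{C}^{1,1}$ on the open set $\mathcal{O}\supset\mathcal{M}$. The resolution is exactly the chain above: item (ii) produces a $k$-independent bound on $\|v^{k,j}\|$, which (once $\alpha_{k,j}$ exceeds the uniform threshold $\widehat\alpha$) confines both $x^k\!+\!v^{k,j}$ and $R_{x^k}(v^{k,j})$ to a single compact convex $\mathcal{D}\subset\mathcal{O}$; there the descent-lemma and retraction estimates carry $k$-independent constants, and the geometric growth of $\alpha_{k,j}$ does the rest.
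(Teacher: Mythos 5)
Your proof is correct, and the argument for item (iii) is genuinely different from the paper's.

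For items (i)--(ii) your route is essentially the paper's: both exploit $\{x^k\}\subset\mathcal{L}_\Theta(x^0)$, the chain $\Theta_{k,j}(v^{k,j})\le\Theta_{k,j}(0)=\Theta(x^k)\le\Theta(x^0)$, and a linear lower bound for the composition $\vartheta\circ\ell_F$ combined with $\mathcal{Q}_{k,j}\succeq\alpha_{\min}\mathcal{I}$ to produce a uniformly coercive quadratic minorant of $\Theta_{k,j}$. The only cosmetic difference is that you use a single global affine minorant $\vartheta\ge\langle a,\cdot\rangle+b$, while the paper uses the point-dependent subgradient $\zeta^k\in\partial\vartheta(F(x^k))$ and then invokes local boundedness of $\partial\vartheta$ to control $\{\zeta^k\}$; both deliver the same uniform $c_0,c_1$.

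For item (iii) the paper argues by contradiction: if $\alpha_{k,j}$ were unbounded, a subsequence $\widehat\alpha_k=\alpha_{k,i_k-1}\to\infty$ would force $v^{k,i_k-1}\to 0$, and then the pointwise estimates from \eqref{Lk} and Proposition~\ref{prop-retract} (with moduli $\overline L_k,\gamma_k$ bounded along the compact iterate sequence via Rockafellar--Wets Theorem~9.2) would eventually make the step-6 test pass, contradicting the assumed failure. Your argument is direct and slightly more constructive: from the coercive minorant with $\alpha_{k,j}$ in place of $\alpha_{\min}$ you read off the quantitative decay $\|v^{k,j}\|^2\le 2C/\alpha_{k,j}$, then pin down a single compact convex $\mathcal{D}\subset\mathcal{O}$ with $\varLambda$ in its interior, derive uniform Lipschitz moduli once and for all on $\mathcal{D}$ and $F(\mathcal{D})$, and show that above an explicit threshold $A$ the acceptance test must fire, giving the closed-form bound $b_\alpha=\max\{\alpha_{\max},\sigma A\}$. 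What the paper's contradiction buys is brevity and avoidance of having to name the common compact $\mathcal{D}$ and the threshold; what yours buys is an explicit, $k$-independent bound on $\alpha_{k,j}$ and hence on $j_k$, and a cleaner account of the uniformity, which the paper instead encapsulates in the statement "$\{\overline{L}_k+\gamma_k\}_{k\in\mathcal{K}}$ is bounded." Your closing observation about why Lemma~\ref{step-num} cannot be invoked as-is (its Lipschitz data live on $k$-dependent sets and it needs $f,F$ to be $\mathcal{C}^{1,1}$ beyond $\mathcal{O}$) is exactly right and explains why the paper proves Proposition~\ref{bound-alpk}(iii) independently rather than quoting Lemma~\ref{step-num}.
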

\begin{proof}
 {\bf(i)} For each $k\in\mathbb{N}$ and $j\in[j_k]$, from the first inequality of \eqref{inexact-cond}, we have
 \begin{equation}\label{temp-ineq41}
 q_k(v^{k,j})\le\Theta_{k,j}(v^{k,j})\le\Theta_{k,j}(0)=\Theta(x^k)\le\Theta(x^0), 
 \end{equation}
 where $q_k(\cdot)$ is the function defined in Lemma \ref{step-num}. In view of Assumption \ref{ass0} (ii), for each $k\in\mathbb{N}$, there exists $\zeta^k\in\partial\vartheta(F(x^k))$ such that for all $j\in[j_k]$, 
 \begin{equation}\label{subdiff-vtheta}
 \vartheta(\ell_F(x^k+v^{k,j};x^k))\ge\vartheta(F(x^k))+\langle\nabla F(x^k)\zeta^k,v^{k,j}\rangle.    
 \end{equation}
 Together with the expression of $q_k$ and the above \eqref{temp-ineq41}, for each $k\in\mathbb{N}$ and $j\in[j_k]$, it holds
 \[
 \langle\nabla\!f(x^k)+\nabla F(x^k)\zeta^k,v^{k,j}\rangle+\frac{1}{2}\alpha_{\rm min}\|v^{k,j}\|^2+\vartheta(F(x^k))+f(x^k)\le\Theta(x^0).
 \]
 Since the multivalued mapping $\partial\vartheta:\mathbb{Z}\rightrightarrows\mathbb{Z}$ is locally bounded by \cite[Theorem 9.13 (d)]{RW98}, the boundedness of $\{\zeta^k\}_{k\in\mathbb{N}}$ follows item (i) and \cite[Proposition 5.15]{RW98}. Then, the function $\mathbb{X}\ni v\mapsto\langle\nabla\!f(x^k)+\nabla F(x^k)\zeta^k,v\rangle+\frac{\alpha_{\rm min}}{2}\|v\|^2$ is coercive. Thus, from the last inequality and Assumptions \ref{ass1} and \ref{ass0},  there exists $b_{v}>0$ such that $\|v^{k,j}\|\le b_{v}$ for all $k\in\mathbb{N}$ and $j\in[j_k]$. The result follows Proposition \ref{prop-xk} (i) by enlarging $b_{v}$ if necessary.
	
 \noindent
 {\bf(ii)} Suppose on the contrary that the conclusion does not hold. There exist an infinite index set $\mathcal{K}\subset\mathbb{N}$ and an index sequence $\{i_k\}_{k\in\mathcal{K}}$ with $1\le i_k\le j_k$ for all $k\in\mathcal{K}$ such that $\lim_{\mathcal{K}\ni k\to\infty}\alpha_{k,i_k}=\infty$. For each $k\in\mathbb{N}$, let $\widehat{\alpha}_{k}:=\alpha_{k,i_k-1}$. From step 6 of Algorithm \ref{iRVM}, 
 \begin{equation}\label{aim-ineq41}
 \Theta(R_{x^{k}}(v^{k,i_k-1}))>\Theta_{k,i_k-1}(v^{k,i_k-1})-\frac{1}{2}\overline{\gamma}\|v^{k,i_k-1}\|^2\quad\forall k\in\mathbb{N}.
 \end{equation}
 For each $k\in\mathbb{N}$, from the first inequality in \eqref{inexact-cond}, $\mathcal{Q}_{k,i_k-1}\succeq\widehat{\alpha}_k\mathcal{I}$ and the above \eqref{subdiff-vtheta}, we get
 \begin{align*}
 \Theta(x^k)&=\Theta_{k,i_k-1}(0)\ge \Theta_{k,i_k-1}(v^{k,i_k-1})\nonumber\\
 &\ge \vartheta(\ell_{F}(x^{k}\!+\!v^{k,i_k-1};x^{k}))-\|\nabla f(x^k)\|\|v^{k,i_k-1}\|+\frac{1}{2}\widehat{\alpha}_{k}\|v^{k,i_k-1}\|^2+f(x^{k})\nonumber\\
 &\ge\vartheta(F(x^k))-\big(\|\nabla\!f(x^k)\|+\|\nabla F(x^k)\zeta^k\|\big)\|v^{k,i_k-1}\|+\frac{1}{2}\widehat{\alpha}_{k}\|v^{k,i_k-1}\|^2+f(x^k).
 \end{align*}
where the third inequality follows from \eqref{subdiff-vtheta}. Since the sequences $\{x^k\}_{k\in\mathbb{N}},\{\zeta^k\}_{k\in\mathbb{N}}$ and $\{\Theta(x^k)\}_{k\in\mathbb{N}}$ are bounded, the last inequality and $\lim_{\mathcal{K}\ni k\to\infty}\widehat{\alpha}_{k}=\infty$ imply that $\lim_{\mathcal{K}\ni k\to \infty}v^{k,i_k-1}=0$. Now invoking the previous \eqref{Lk} with $x=x^{k}+v^{k,i_k-1}$ for sufficiently large $k$ and recalling $\mathcal{Q}_{k,i_k-1}\succeq\widehat{\alpha}_k\mathcal{I}$, it follows 
 \begin{align*}
 \Theta(x^{k}\!+\!v^{k,i_k-1})&\le f(x^{k})\!+\!\langle\nabla\!f(x^{k}),v^{k,i_k-1}\rangle+\vartheta(\ell_{F}(x^{k}\!+\!v^{k,i_k-1};x^{k}))+\frac{1}{2}(\overline{L}_k\!+\!1)\|v^{k,i_k-1}\|^2\\
 &\le \Theta_{k,i_k-1}(v^{k,i_k-1})+\frac{1}{2}(\overline{L}_k\!+\!1-\widehat{\alpha}_k)\|v^{k,i_k-1}\|^2.
 \end{align*}
From Assumption \ref{ass1},  $\varLambda_0\subset\mathcal{M}\subset\mathcal{O}$, there exists $\delta>0$ such that ${\rm cl}(\varLambda_0+\mathbb{B}(0,\delta))\subset\mathcal{O}$. Using $\lim_{\mathcal{K}\ni k\to \infty}v^{k,i_k-1}=0$ and applying Proposition \ref{prop-retract} (i) with $\varLambda=\varLambda_0, V=\mathbb{B}(0,\delta)$ and $\mathcal{O}'=\mathcal{O}$, there exists a constant $\gamma>0$ such that for sufficiently large $k\in\mathcal{K}$,
 \begin{equation*}
 \Theta(R_{x^{k}}(v^{k,i_k-1}))\leq\Theta(x^{k}\!+\!v^{k,i_k-1})+\gamma\|v^{k,i_k-1}\|^2.
 \end{equation*}
 From the last two inequalities, it immediately follows that for sufficiently large $k\in\mathcal{K}$,
 \[
  \Theta(R_{x^{k}}(v^{k,i_k-1}))\le \Theta_{k,i_k-1}(v^{k,i_k-1})-\frac{1}{2}(\widehat{\alpha}_k-\overline{L}_k-1-2\gamma)\|v^{k,i_k-1}\|^2.
 \]
 The boundedness of $\{x^k\}_{k\in\mathbb{N}}\subset\mathcal{M}\subset\mathcal{O}$ and \cite[Theorem 9.2]{RW98} imply that the sequences $\{{\rm lip}\,f(x^k)\}_{k\in\mathbb{N}}$, $\{{\rm lip}\,\nabla\!f(x^k)\}_{k\in\mathbb{N}}$, $\{{\rm lip}\,F(x^k)\}_{k\in\mathbb{N}}$, $\{{\rm lip}\,F'(x^k)\}_{k\in\mathbb{N}}$ and $\{{\rm lip}\,\vartheta(F(x^k))\}_{k\in\mathbb{N}}$ are all bounded, so is the sequence $\{\overline{L}_k\}_{k\in\mathcal{K}}$. Recalling that $\lim_{\mathcal{K}\ni k\to\infty}\widehat{\alpha}_{k}=\infty$, the last inequality yields a contradiction to \eqref{aim-ineq41}. The desired conclusion holds.
\end{proof} 
\section{Iteration complexity}\label{sec4}

This section is dedicated to the iteration complexity of Algorithm \ref{iRVM} with the termination condition $\|v^{k,j}\|\le\chi\epsilon$, instead of $\|v^{k,j}\|=0$, for finding an $\epsilon$-stationary point, where $\chi$ is the constant defined in Theorem \ref{complexity} below.
\begin{theorem}\label{complexity}
 Let $c_{\nabla\!F}\!:=\sup\limits_{k\in\mathbb{N}}\|F'(x^k)\|$ and $\chi\!:=\frac{[\max\{\alpha^*,c_{\nabla\!F}\}]^{-1}}{(\alpha_{\rm min}^{-1}\mu_{\rm max})^{1/2}+1}$. Under Assumption \ref{ass1}, 
 \begin{description}
 \item[{\bf(i)}] if $\|v^{k,j}\|\le\chi\epsilon$
 for some $k\in\mathbb{N}$ and $j\in[j_k]$, then $x^k$ is an $\epsilon$-stationary point of \eqref{prob};

 \item[{\bf(ii)}] Algorithm \ref{iRVM} returns an $\epsilon$-stationary point within at most $K:=\lceil \frac{2(\Theta(x^0)-\varsigma^*)}{\overline{\gamma}\chi^2\epsilon^2}\rceil$ steps.
 \end{description}
\end{theorem}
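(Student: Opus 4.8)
The plan is to establish (i) first, which is the crux, and then derive (ii) by a routine telescoping of the sufficient-decrease inequality. For (i), fix $k\in\mathbb{N}$ and $j\in[j_k]$ and write out the optimality condition for the exact minimizer $\overline{v}^{k,j}$ of the subspace-constrained strongly convex problem \eqref{subprobj}. Since $\Theta_{k,j}$ is everywhere finite and convex and the normal cone to the subspace $T_{x^k}\mathcal{M}$ is $N_{x^k}\mathcal{M}$, the optimality of $\overline{v}^{k,j}$ reads $0\in\partial\Theta_{k,j}(\overline{v}^{k,j})+N_{x^k}\mathcal{M}$, and the convex chain rule applied to $v\mapsto\vartheta(\ell_F(x^k\!+\!v;x^k))$ (a convex function composed with the affine map $v\mapsto F(x^k)+F'(x^k)v$) produces some $\overline{\xi}^{k,j}\in\partial\vartheta(\ell_F(x^k\!+\!\overline{v}^{k,j};x^k))$ and $\xi^{k,j}\in N_{x^k}\mathcal{M}$ with
\[
\nabla\!f(x^k)+\mathcal{Q}_{k,j}\overline{v}^{k,j}+\nabla F(x^k)\overline{\xi}^{k,j}=-\xi^{k,j}.
\]
Projecting both sides onto $T_{x^k}\mathcal{M}$ eliminates $\xi^{k,j}$, so $\Pi_{T_{x^k}\mathcal{M}}\big(\nabla\!f(x^k)+\nabla F(x^k)\overline{\xi}^{k,j}\big)=-\Pi_{T_{x^k}\mathcal{M}}(\mathcal{Q}_{k,j}\overline{v}^{k,j})$; by nonexpansiveness of the projection and $\|\mathcal{Q}_{k,j}\|\le\alpha^*$ from Proposition \ref{bound-alpk}(iii), the first quantity in Definition \ref{def-spoint}(ii) is at most $\alpha^*\|\overline{v}^{k,j}\|$. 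Taking $\overline{z}^{k,j}:=\ell_F(x^k\!+\!\overline{v}^{k,j};x^k)=F(x^k)+F'(x^k)\overline{v}^{k,j}$, one has $\overline{\xi}^{k,j}\in\partial\vartheta(\overline{z}^{k,j})$ and $\|F(x^k)-\overline{z}^{k,j}\|=\|F'(x^k)\overline{v}^{k,j}\|\le c_{\nabla\!F}\|\overline{v}^{k,j}\|$, so $x^k$ is a $\max\{\alpha^*,c_{\nabla\!F}\}\|\overline{v}^{k,j}\|$-stationary point of \eqref{prob}.

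It then remains to turn a hypothesis on $\|v^{k,j}\|$ or $\|\overline{v}^{k,j}\|$ into a bound on $\|\overline{v}^{k,j}\|$. Using the relative-error estimate Proposition \ref{prop-xk}(i), $\|\overline{v}^{k,j}\|\le\|v^{k,j}\|+\|v^{k,j}-\overline{v}^{k,j}\|\le\big(1+(\alpha_{\rm min}^{-1}\mu_{\rm max})^{1/2}\big)\|v^{k,j}\|$, so $\|v^{k,j}\|\le\chi\epsilon$ forces $\|\overline{v}^{k,j}\|\le[\max\{\alpha^*,c_{\nabla\!F}\}]^{-1}\epsilon$ by the definition of $\chi$; and $\|\overline{v}^{k,j}\|\le\chi\epsilon$ gives the same bound at once, since $\chi\le[\max\{\alpha^*,c_{\nabla\!F}\}]^{-1}$. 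In either case $\max\{\alpha^*,c_{\nabla\!F}\}\|\overline{v}^{k,j}\|\le\epsilon$, and the previous paragraph yields (i).

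For (ii), if the inner construction ever returns $v^{k,j}=0$ the algorithm stops and $x^k$ is stationary by Remark \ref{remark-alg}(c), hence $\epsilon$-stationary; otherwise it performs at least $K$ iterations, and summing $\Theta(x^{k+1})\le\Theta(x^k)-(\overline{\gamma}/2)\|v^k\|^2$ from Proposition \ref{prop-xk}(ii) over $k=0,\dots,K-1$ together with $\Theta(x^K)\ge\varsigma^*$ gives $\tfrac{\overline{\gamma}}{2}\sum_{k=0}^{K-1}\|v^k\|^2\le\Theta(x^0)-\varsigma^*$. Hence $\min_{0\le k\le K-1}\|v^k\|^2\le\frac{2(\Theta(x^0)-\varsigma^*)}{\overline{\gamma}K}\le\chi^2\epsilon^2$ by the choice of $K$, so some $v^k=v^{k,j_k}$ satisfies $\|v^k\|\le\chi\epsilon$ and, by (i), this $x^k$ is an $\epsilon$-stationary point. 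I expect the only mildly delicate step to be the subdifferential/normal-cone calculus extracting $\overline{\xi}^{k,j}$ and $\xi^{k,j}$ from the optimality of \eqref{subprobj} and eliminating the normal component by projection; the remainder is nonexpansiveness of the projection and bookkeeping of the uniform bounds $\alpha^*$, $c_{\nabla\!F}$ and the constant $\chi$.
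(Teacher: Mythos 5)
Your proposal is correct and follows essentially the same route as the paper's proof: for (i) you extract the optimality condition for $\overline v^{k,j}$, project onto $T_{x^k}\mathcal{M}$ to kill the normal component, bound by $\alpha^*\|\overline v^{k,j}\|$ and $c_{\nabla F}\|\overline v^{k,j}\|$, and convert $\|v^{k,j}\|$ to $\|\overline v^{k,j}\|$ via Proposition \ref{prop-xk}(i); for (ii) you telescope the sufficient decrease from Proposition \ref{prop-xk}(ii). The only cosmetic difference is that the paper phrases (ii) as a contradiction while you argue directly through $\min_{0\le k\le K-1}\|v^k\|$, which amounts to the same thing.
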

\begin{proof}
{\bf(i)} Let $z^{k,j}:=\ell_F(x^k+\overline{v}^{k,j};x^k)$. From \eqref{optim-cond}, there exists  $\xi^{k,j}\in\partial\vartheta(z^{k,j})$ such that 
 \[
  \|\Pi_{T_{x^k}\mathcal{M}}(\nabla\!f(x^k)+\nabla F(x^k)\xi^{k,j})\|=\|\Pi_{T_{x^k}\mathcal{M}}(\mathcal{Q}_{k,j}\overline{v}^{k,j})\|\le \|\mathcal{Q}_{k,j}\overline{v}^{k,j}\|\le \alpha^*\|\overline{v}^{k,j}\|,
 \]
 where the second inequality is due to Proposition \ref{bound-alpk} (ii). In addition, by the expression of $z^{k,j}$, we have $\|F(x^k)-z^{k,j}\|\le\|F'(x^k)\|\|\overline{v}^{k,j}\|\le c_{\nabla\!F}\|\overline{v}^{k,j}\|$. The two sides imply that  
 \begin{align}\label{vbarkj-vkj}
 &\max\big\{\|\Pi_{T_{x^k}\mathcal{M}}(\nabla\!f(x^k)+\nabla F(x^k)\xi^{k,j})\|,\|F(x^k)-z^{k,j}\|\big\}\nonumber\\ 
 &\le\max\{\alpha^*,c_{\nabla\!F}\}\|\overline{v}^{k,j}\|\le\chi^{-1}\|v^{k,j}\|,
 \end{align}
 where the second inequality is due to Proposition \ref{prop-xk} (i). The result then follows.
 
\noindent
 {\bf(ii)} We argue that Algorithm \ref{iRVM} returns an iterate $x^k$ with the associated $v^k$ satisfying $\|v^{k}\|\le\chi\epsilon$ within at most $K$ iterations. If not, $\|v^{k}\|>\chi\epsilon$ for all $k\in[K]$. From Proposition \ref{prop-xk} (ii), 
 \begin{equation*}
 (K\!+\!1)\chi^2\epsilon^2<\sum_{k=0}^{K}\|v^k\|^2\le 2\overline{\gamma}^{-1}\sum_{k=0}^{K}\big[\Theta(x^k)-\Theta(x^{k+1})\big]\le 2\overline{\gamma}^{-1}(\Theta(x^0)-\varsigma^*).
 \end{equation*}
 Then, $K\!+\!1<2[\overline{\gamma}\chi^2]^{-1}(\Theta(x^0)-\varsigma^*)\epsilon^{-2}$, a contradiction to the definition of $K$. 
\end{proof} 

For each $k\in\mathbb{N}$, let $q_{k},\mathcal{X}_k$ and $\widehat{\mathcal{X}}_k$ be the same as in Lemma \ref{step-num}. Under Assumption \ref{ass1}, it is easy to argue that the set $\bigcup_{k\in\mathbb{N}}\big\{v\in\mathbb{X}\ |\ q_k(v)\le \Theta(x^0)\big\}$ is bounded, so is the set $\bigcup_{k\in\mathbb{N}}\big\{v\in\mathbb{X}\ |\ q_k(v)\le q_k(0)\big\}$ because $q_k(0)=\Theta(x^k)\le\Theta(x^0)$ by Proposition \ref{prop-xk} (ii). Then, the set $\bigcup_{k\in\mathbb{N}}\mathcal{X}_k$ is bounded, so is $\widehat{\mathcal{X}}:={\rm cl}\bigcup_{k\in\mathbb{N}}\widehat{\mathcal{X}}_k$. With these notation, we present the complexity of calls to the inner solver in the following corollary.  
\begin{corollary}\label{inner-complexity}
 Suppose $f$ and $F$ are $\mathcal{C}^{1,1}$ on the space $\mathbb{X}$. Under Assumption \ref{ass1}, Algorithm \ref{iRVM} returns an $\epsilon$-stationary point within at most $j_{\rm max}K$ calls to the subproblem solver with
 \begin{equation*}
 j_{\rm max}:=\big\lceil(\log\sigma)^{-1}\log[\alpha_{\rm min}^{-1}(\overline{\gamma}+\widehat{L}_{\vartheta}\widehat{L}_{\nabla\!F}\!+\!\widehat{L}_{\nabla\!f}+2\widehat{M})]\big\rceil,
 \end{equation*}
 where  $\widehat{L}_{f},\widehat{L}_{\nabla\!f},\widehat{L}_{F}$ and $\widehat{L}_{\nabla\!F}$ are the Lipschitz constant of $f,\nabla\!f,F$ and $\nabla F$ on the compact set $\widehat{\mathcal{X}}$, $\widehat{L}_{\vartheta}$ is the Lipschitz constant  of $\vartheta$ on the set $F(\widehat{\mathcal{X}})$, and $\widehat{M}$ is the constant from Proposition \ref{prop-retract} (i) with $\varLambda=\varLambda_0$ in Assumption \ref{ass1} and $V:=\sup_{k\in\mathbb{N}}\max_{z\in{\rm cl}\bigcup_{k\in\mathbb{N}}\mathcal{X}_k}\|z-x^k\|$.
\end{corollary}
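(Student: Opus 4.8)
The plan is to combine the per-iteration inner-loop bound of Lemma \ref{step-num} with the outer iteration complexity of Theorem \ref{complexity}, after first checking that under Assumption \ref{ass1} all the relevant Lipschitz constants and retraction constants can be taken \emph{uniform} in $k$. The first step is to verify that the auxiliary sets $\mathcal{X}_k$ and $\widehat{\mathcal{X}}_k$ stay in a common compact set. Indeed, as noted in the paragraph preceding the corollary, for each $k$ and each $j\in[j_k]$ the first inequality in \eqref{inexact-cond} together with Proposition \ref{prop-xk} (ii) gives $q_k(v^{k,j})\le q_k(0)=\Theta(x^k)\le\Theta(x^0)$, so $x^k+v^{k,j}\in\mathcal{X}_k\subset\{z:q_k(z-x^k)\le\Theta(x^0)\}$; since $q_k$ is coercive uniformly in $k$ (the linear terms $\nabla\!f(x^k)+\nabla F(x^k)\zeta^k$ are bounded by boundedness of $\{x^k\}$ from Proposition \ref{bound-alpk} (i) and of $\{\zeta^k\}$ as in the proof of Proposition \ref{bound-alpk} (ii)), the union $\bigcup_{k}\mathcal{X}_k$ is bounded and hence $\widehat{\mathcal{X}}={\rm cl}\bigcup_k\widehat{\mathcal{X}}_k$ is compact. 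Because $f$ and $F$ are $\mathcal{C}^{1,1}$ on all of $\mathbb{X}$, they are Lipschitz with Lipschitz gradients on $\widehat{\mathcal{X}}$, so $\widehat{L}_{f},\widehat{L}_{\nabla\!f},\widehat{L}_{F},\widehat{L}_{\nabla\!F}$ are well defined, and $\vartheta$ (being convex, hence locally Lipschitz) is Lipschitz on the compact set $F(\widehat{\mathcal{X}})$, giving $\widehat{L}_{\vartheta}$. Likewise, applying Corollary \ref{corollary-retract} with a compact $\varLambda\subset\mathcal{M}$ covering $\{x^k\}$ and $\delta$ as specified produces a single constant $\widehat{M}$ valid for all $k$.

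The second step is to observe that these uniform constants dominate the per-iteration quantities in Lemma \ref{step-num}: for each $k$ one has $\delta_k=\max_{z\in\mathcal{X}_k}\|z-x^k\|\le\delta$, $\widehat{\mathcal{X}}_k\subset\widehat{\mathcal{X}}$, $F(\widehat{\mathcal{X}}_k)\subset F(\widehat{\mathcal{X}})$, $M_k\le\widehat{M}$, and $L_{f,k}\le\widehat{L}_f$, $L_{\nabla\!f,k}\le\widehat{L}_{\nabla\!f}$, $L_{F,k}\le\widehat{L}_F$, $L_{\nabla\!F,k}\le\widehat{L}_{\nabla\!F}$, $L_{\vartheta,k}\le\widehat{L}_\vartheta$, while $\alpha_{k,0}\ge\alpha_{\rm min}$ so $\alpha_{k,0}^{-1}\le\alpha_{\rm min}^{-1}$. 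Substituting these bounds into the right-hand side of \eqref{jineq} shows that for every $k$ the inner loop terminates once $j>j_{\rm max}-1$ (with $j_{\rm max}$ as stated), hence $j_k\le j_{\rm max}$; that is, each outer iteration $k$ costs at most $j_{\rm max}$ calls to the subproblem solver.

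The third step is purely arithmetic: by Theorem \ref{complexity} (ii) Algorithm \ref{iRVM} returns an $\epsilon$-stationary point within at most $K=\lceil 2(\Theta(x^0)-\varsigma^*)/(\overline{\gamma}\chi^2\epsilon^2)\rceil$ outer iterations, and each such iteration triggers at most $j_{\rm max}$ calls to the inner solver; multiplying gives the claimed bound of $j_{\rm max}K$ total calls.

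The only genuine obstacle is the first step — establishing the uniform boundedness/compactness of $\bigcup_k\widehat{\mathcal{X}}_k$ and the resulting uniformity of all the constants; once that is in place the corollary is an immediate combination of Lemma \ref{step-num} and Theorem \ref{complexity}. This uniformity in turn rests on three earlier facts already available in the excerpt: boundedness of $\{x^k\}$ (Proposition \ref{bound-alpk} (i)), boundedness of the subgradient selections $\{\zeta^k\}$ (shown inside the proof of Proposition \ref{bound-alpk} (ii)), and the lower-level-set bound $q_k(0)=\Theta(x^k)\le\Theta(x^0)$ (Proposition \ref{prop-xk} (ii)); together with the global $\mathcal{C}^{1,1}$ hypothesis on $f$ and $F$ and the continuity of the retraction, these force all of $\widehat{L}_f,\widehat{L}_{\nabla\!f},\widehat{L}_F,\widehat{L}_{\nabla\!F},\widehat{L}_\vartheta,\widehat{M}$ to exist as finite constants independent of $k$.
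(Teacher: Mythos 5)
Your proposal is correct and follows essentially the same route as the paper, which compresses the argument into the paragraph preceding the corollary: establish that $\bigcup_k\mathcal{X}_k$ is bounded (via the uniform coercivity of $q_k$ coming from the bounded subgradient selections $\zeta^k$ and the lower level set bound $q_k(0)\le\Theta(x^0)$), conclude $\widehat{\mathcal{X}}$ is compact so all Lipschitz and retraction constants are uniform in $k$, and then combine Lemma~\ref{step-num} with Theorem~\ref{complexity}. The only cosmetic discrepancy is your phrasing ``$M_k\le\widehat{M}$'' — more precisely, $\widehat{M}$ is simply a valid retraction constant on the larger set and hence also usable in place of $M_k$ in \eqref{jineq} — and the off-by-one ambiguity between $j_k$ and the number of solver calls, but both are shared with the paper's own bookkeeping and do not affect the order of the bound.
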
 
\begin{proof}
For each $k\in\mathbb{N}$ and $j\in[j_k]$, from the proof of Lemma \ref{step-num} and the given assumption, 
\begin{equation*}
 \Theta(x^k\!+v^{k,j})\le \Theta_{k,j}(v^{k,j})+\!\frac{1}{2}(\widehat{L}_{\vartheta}\widehat{L}_{\nabla\!F}\!+\!\widehat{L}_{\nabla\!f})\|v^{k,j}\|^2-\!\frac{1}{2}\sigma^j\alpha_{\rm min}\|v^{k,j}\|^2.
\end{equation*}
Using $v^{k,j}\in T_{x^k}\mathcal{M}\cap V$ and  Proposition \ref{prop-retract} (i) with $\varLambda=\varLambda_0, \mathcal{O}'=\mathbb{X}$ and the set $V$ yields
\begin{equation*}
 \Theta(R_{x^{k}}(v^{k,j}))	\le\Theta(x^{k}\!+\!v^{k,j})+\widehat{M}\|v^{k,j}\|^2.
\end{equation*}
The last two inequalities together imply that for each $k\in\mathbb{N}$ and $j\in[j_k]$, 
\begin{equation*}
 \Theta(R_{x^{k}}(v^{k,j}))\leq \Theta_{k,j}(v^{k,j})-\frac{1}{2}[\sigma^j\alpha_{\rm min}-(\widehat{L}_{\vartheta}\widehat{L}_{\nabla\!F}\!+\!\widehat{L}_{\nabla\!f}+2\widehat{M})]\|v^{k,j}\|^2.
 \end{equation*}
 Thus, for each $k\in\mathbb{N}$, when $j>j_{\rm max}$, $\Theta(R_{x^k}(v^{k,j}))\leq \Theta_{k,j}(v^{k,j})-\frac{\overline{\gamma}}{2}\|v^{k,j}\|^2$, i.e., the inner loop stops. Along with Theorem \ref{complexity} (ii), the complexity of calls to the inner solver follows.  
\end{proof}

When the PD linear operator $\mathcal{Q}_{k,j}$ is specified as in Theorem \ref{oracle} below and the subproblems are solved with the dual first-order methods in \cite{Necoara2016}, Algorithm \ref{iRVM} can return an $\epsilon$-stationary point with at most $O(\epsilon^{-4})$ calls oracles. Here, we assume that we have access to oracles that compute $f(x),\nabla f(x),F(x),F'(x)v,\nabla F(x)w$, the projection mapping $\Pi_{T_{x}\mathcal{M}}(\xi)$, and $\mathcal{P}_{\gamma\vartheta}(z)$ for all $x,v,\xi\in\mathbb{X}$ and $z,w\in\mathbb{Z}$. Such an oracle complexity is consistent with that of the primal-dual RADMM in Li et al. \cite{LiMa2024} applied to \eqref{prob} except that every iteration of the latter calls a retraction but does not require $F'(x)v$, while Algorithm \ref{iRVM} calls a retraction only at each iteration of the outer loop. 
\begin{theorem}\label{oracle}
 Suppose that $f,F$ are $\mathcal{C}^{1,1}$ on $\mathbb{X}$ and Assumption \ref{ass1} holds, and that $\Theta^{\rm LB}_{k,j}$ for each $k\in\mathbb{N}$ and $j\in[j_k]$ is the dual objective value returned by Algorithm DFG in \cite{Necoara2016} applied to solve the subproblems with a starting point from a bounded set $Z\subset\mathbb{Z}$ for yielding the last primal iterate (resp. the average primal iterate). Then, Algorithm \ref{iRVM} with $\mu_k\ge\mu_{\rm min}>0$ and $\mathcal{Q}_{k,j}=\alpha_{k,j}\mathcal{I}+\beta_k\nabla F(x^k)F'(x^k)$ for a positive bounded $\{\beta_k\}_{k\in\mathbb{N}}$ returns an $\epsilon$-stationary point by calling the oracles at most $O(\epsilon^{-4})$ (resp. at most $O(\epsilon^{-3})$) times.  
\end{theorem}
\begin{proof}
 For each $k\in\mathbb{N}$ and $j\in[j_k]$, since $\mathcal{Q}_{k,j}=\alpha_{k,j}\mathcal{I}+\beta_k\nabla F(x^k)F'(x^k)$, the dual of \eqref{subprobj} is
 \begin{equation}\label{dualkj}
 \Theta_{k,j}(\overline{v}^{k,j})=\max_{\zeta\in\mathbb{Z}}\Psi_{k,j}(\zeta)\ \ {\rm with}\ \ \Psi_{k,j}(\zeta):=\min_{v\in T_{x^k}\mathcal{M},z\in\mathbb{Z}}\mathcal{L}_{k,j}(v,z,\zeta)
 \end{equation}
 where $\mathcal{L}_{k,j}$ is the Lagrange function of \eqref{subprobj} and for $(v,z,\zeta)\in\mathbb{X}\times\mathbb{Z}\times\mathbb{Z}$ it has the form 
\begin{equation}\label{la-func}
    \mathcal{L}_{k,j}(v,z,\zeta):=\langle\nabla\!f(x^k),v\rangle+\frac{\alpha_{k,j}}{2}\|v\|^2+\frac{\beta_k}{2}\|z-F(x^k)\|^2+\vartheta(z)+\langle\zeta,\ell_F(x^k+v;x^k)-z\rangle.
\end{equation}
 For each $k\in\mathbb{N}$ and $j\in[j_k]$, let $\overline{\zeta}^{k,j}$ be an arbitrary optimal solution of \eqref{dualkj}, and let $\overline{v}^{k,j}=-\alpha_{k,j}^{-1}\Pi_{T_{x^k}\mathcal{M}}(\nabla F(x^k)\overline{\zeta}^{k,j}\!+\!\nabla\!f(x^k))$ and  $\overline{z}^{k,j}=\mathcal{P}_{\!\beta_{k}^{-1}\vartheta}(F(x^k)+\beta_{k}^{-1}\overline{\zeta}^{k,j})$. It is easy to check $(\overline{v}^{k,j},\overline{z}^{k,j})\in\mathop{\arg\min}_{v\in T_{x^k}\mathcal{M},z\in\mathbb{Z}}\mathcal{L}_{k,j}(v,z,\overline{\zeta}^{k,j})$, whose optimality condition implies 
 \begin{equation}\label{temp-optcond}
 \overline{\zeta}^{k,j}\in\beta_k(\overline{z}^{k,j}-F(x^k))+\partial\vartheta(\overline{z}^{k,j}).
 \end{equation}
 Furthermore, $\overline{v}^{k,j}$ is the optimal solution of \eqref{subprobj}. We claim that there exists $c_{\!d}^*$ such that $\|\overline{\zeta}^{k,j}\|\le c_{\!d}^*$ for all $k\in\mathbb{N}$ and $j\in[j_k]$. Indeed, from $\nabla\Psi_{k,j}(\overline{\zeta}^{k,j})=0$, it follows that $\overline{z}^{k,j}=F'(x^k)\overline{v}^{k,j}+F(x^k)$. Notice that $\sup_{k\in\mathbb{N}}\sup_{j\in[j_k]}\|\overline{v}^{k,j}\|\le b_{v}$ by Proposition \ref{bound-alpk} (i). By Assumption \ref{ass1}, there exists $\widetilde{b}_{v}>0$ such that $\sup_{k\in\mathbb{N}}\sup_{j\in[j_k]}\|\overline{z}^{k,j}\|\le c_{\nabla\!F}b_{v}+\!\|F(x^k)\|\le \widetilde{b}_{v}$. Thus, from the inclusion \eqref{temp-optcond}, the local boundedness of $\partial\vartheta$ and \cite[Proposition 5.15]{RW98}, the claimed $c_{\!d}^*$ necessarily exists. Now, for each $k\in\mathbb{N}$ and $j\in[j_k]$, by letting $\mathcal{Y}_{k,j}^*$ be the solution set of the dual problem \eqref{dualkj}, we have 
 $\sup_{\zeta\in Z}\|\zeta-\Pi_{\mathcal{Y}_{k,j}^*}(\zeta)\|\le c_{Z}+c_{\!d}^*\ \ {\rm with}\ \  c_{Z}:=\sup_{\zeta\in Z}\|\zeta\|$. 
 
 From \cite[Theorem 1]{Nesterov2005}, the Lipschitz constant of $\nabla\Psi_{k,j}$ is $\sigma_{k,j}^{-1}\|[F'(x^k)\ \ \mathcal{I}]\|^2$, where $\sigma_{k,j}$ is the strongly convex modulus of $\Theta_{k,j}$. Obviously, $\sigma_{k,j}\ge\alpha_{\rm min}$. For each $k\in\mathbb{N}$ and $j\in[j_k]$, invoking \cite[Eqs. (39),(45)]{Necoara2016} with $x^0=\zeta^{1}\in Z,L_{d}=\sigma_{k,j}^{-1}\|[F'(x^k)\ \ \mathcal{I}]\|^2\le\frac{(1+c_{\nabla\!F})^2}{\alpha_{\rm min}}$ and $R_{d}=\|\zeta^{1}\!-\!\Pi_{\mathcal{Y}_{k,j}^*}(\zeta^1)\|\le c_{Z}\!+\!c_{\!d}^*$, we conclude that Algorithm DFG returns $v^{k,j}$ satisfying $\Theta_{k,j}(v^{k,j})-\Theta^{\rm LB}_{k,j}\le \epsilon_{k,j}$ within at most $\big\lceil\frac{6(c_{Z}+c_{d}^*)^2(1+c_{\nabla\!F})^2}{\alpha_{\rm min}\epsilon_{k,j}}+\frac{\sqrt{2}(1+c_{\nabla F})(c_{Z}+c_{d}^*)}{\sqrt{\alpha_{\rm min}\epsilon_{k,j}}}\big\rceil$ iterations. Notice that 
 the inexactness condition \eqref{inexact-cond} holds once $\Theta_{k,j}(v^{k,j})-\Theta^{\rm LB}_{k,j}\le\epsilon_{k,j}$ with $\epsilon_{k,j}=\min\{\Theta_{k,j}(0)-\Theta^{\rm LB}_{k,j},\frac{\mu_k}{2}\|v^{k,j}\|^2\}$. Further, by the strong convexity of $\Theta_{k,j}$, it holds
 \begin{align*}
 \Theta_{k,j}(0)-\Theta^{\rm LB}_{k,j}\ge \Theta_{k,j}(0)-\Theta_{k,j}(\overline{v}^{k,j})\ge \frac{1}{2}\alpha_{\min}\|\overline{v}^{k,j}\|^2.
 \end{align*}
Together with $\mu_k\ge\mu_{\rm min}$, it follows that Algorithm DFG returns $v^{k,j}$ satisfying \eqref{inexact-cond} within at most $\frac{12(c_{Z}+c_{\!d}^*)^2(1+c_{\nabla\!F})^2}{\min\{\alpha_{\rm min}^2\|\overline{v}^{k,j}\|^2,\alpha_{\rm min}{\mu_{\rm min}}\|v^{k,j}\|^2\}}+\frac{2(c_{Z}+c_{\!d}^*)(1+c_{\nabla\!F})}{\sqrt{\min\{\alpha_{\rm min}^2\|\overline{v}^{k,j}\|^2,{\alpha_{\rm min}\mu_{\rm min}}\|v^{k,j}\|^2\}}}$ iterations. Now suppose that Algorithm \ref{iRVM} first returns an $\epsilon$-stationary point at the $K$th step. From the previous \eqref{vbarkj-vkj}, we infer that $\min\{\|v^{k,j}\|,\|\overline{v}^{k,j}\|\}>\chi\epsilon$ for each $k=0,\ldots,K-1$ and $j\in[j_k]$. For each $k\in\mathbb{N}$, let $J_{k}:=\{j\in[j_k]\ |\ \alpha_{\rm min}\|\overline{v}^{k,j}\|^2\le\mu_{\rm min}\|v^{k,j}\|^2\}$ and $\overline{J}_k=[j_k]\backslash J_k$.
Then, 
 \begin{align*}
  &\sum_{k=0}^{K-1}\sum_{j=0}^{j_k}\frac{12(c_{Z}\!+\!c_{\!d}^*)^2(1\!+\!c_{\nabla\!F})^2}{\min\{\alpha_{\rm min}^2\|\overline{v}^{k,j}\|^2,\alpha_{\rm min}\mu_{\rm min}\|v^{k,j}\|^2\}}+\frac{2(c_{Z}\!+\!c_{\!d}^*)(1\!+\!c_{\nabla\!F})}{\sqrt{\min\{\alpha_{\rm min}^2\|\overline{v}^{k,j}\|^2,{\alpha_{\rm min}\mu_{\rm min}}\|v^{k,j}\|^2\}}}\\
 &\le \sum_{k=0}^{K-1}\Big[|J_k|\frac{12(c_{Z}\!+\!c_{\!d}^*)^2(1+c_{\nabla\!F})^2}{\alpha_{\rm min}^2\chi^2\epsilon^2}+|\overline{J}_{\!k}|\frac{12(c_{Z}\!+\!c_{\!d}^*)^2(1+c_{\nabla\!F})^2}{\alpha_{\rm min}\mu_{\rm min}\chi^2\epsilon^2}\Big]\\
 &\quad +\sum_{k=0}^{K-1}\Big[|J_k|\frac{2(c_{Z}+c_{\!d}^*)(1\!+\!c_{\nabla\!F})}{\alpha_{\rm min}\chi\epsilon}+|\overline{J}_{\!k}|\frac{2(c_{Z}\!+\!c_{\!d}^*)(1\!+\!c_{\nabla\!F})}{\sqrt{\alpha_{\rm min}\mu_{\rm min}}\chi\epsilon}\Big]\\
 &\le \frac{12Kj_{\rm max}(c_{Z}\!+\!c_{\!d}^*)^2(1\!+\!c_{\nabla\!F})^2}{\min\{\alpha_{\rm min}^2,\alpha_{\rm min}\mu_{\rm min}\}\chi^2\epsilon^2}+\frac{2Kj_{\rm max}(c_{Z}\!+\!c_{\!d}^*)(1\!+\!c_{\nabla\!F})}{\min\{\alpha_{\rm min},\sqrt{\alpha_{\rm min}\mu_{\rm min}}\}\chi\epsilon},
\end{align*}
 where the last inequality is due to Corollary \ref{inner-complexity}. The result follows Theorem \ref{complexity} (ii).  
 
 When $\Theta^{\rm LB}_{k,j}$ for each $k\in\mathbb{N}$ and $j\in[j_k]$ is the dual objective value returned by Algorithm DFG in \cite{Necoara2016} applied to solve the subproblems with a starting point from $Z$ for yielding the average primal iterate, invoking \cite[Eqs. (39), (56)-(57)]{Necoara2016} and following the same arguments as above yields the desired oracle complexity. Thus, we complete the proof.
\end{proof}
\section{Convergence analysis}\label{sec5}

Taking the limit as $\mathcal{K}\ni k\to\infty$ in the inclusion \eqref{optim-cond} and using the outer semicontinuity of $\partial\vartheta$ and $\partial\delta_{\mathcal{M}}$ and Proposition \ref{prop-xk} (iii), we get the subsequential convergence result. 
\begin{theorem}\label{theorem1}
 Under Assumption \ref{ass1}, the set of cluster points of $\{x^k\}_{k\in \mathbb{N}}$, denoted as $\Omega(x^0)$, is nonempty, and every $x^*\in\Omega(x^0)$ is a stationary point of problem \eqref{prob}.
\end{theorem}

The rest of this section focuses on the full convergence of $\{x^k\}_{k\in\mathbb{N}}$ under the KL framework. This requires to construct an appropriate potential function. To this end,
let $\{e_1,\ldots,e_p\}$ be an orthonormal base of $\mathbb{X}$ and $\{d_1,\ldots,d_r\}$ an orthonormal base of $\mathbb{Z}$, and define the linear mappings $\mathcal{E}:\mathbb{R}^p\to\mathbb{X}$ and $\mathcal{F}:\mathbb{R}^r\to\mathbb{Z}$ by $\mathcal{E}\widehat{x}:=\sum_{i=1}^{p}\widehat{x}_ie_i$ for $\widehat{x}\in\mathbb{R}^p$ and $\mathcal{F}\widehat{z}:=\sum_{i=1}^{r}\widehat{z}_id_i$ for $\widehat{z}\in\mathbb{R}^r$. Then, for any $x\in \mathbb{X}$ (resp. $z\in\mathbb{Z}$), there exists a unique $\widehat{x}\in \mathbb{R}^p$ (resp. $\widehat{z}\in \mathbb{R}^r$) such that $x=\mathcal{E}\widehat{x}$ (resp. $z=\mathcal{F}\widehat{z}$). Define the linear mapping $\mathcal{E}^{\dag}\!:\mathbb{X}\to \mathbb{R}^p$ by $\mathcal{E}^{\dag}x=(\langle e_1,x\rangle,\ldots,\langle e_p,x\rangle)^{\top}\in\mathbb{R}^p$. Obviously, $\mathcal{E}\mathcal{E}^{\dag}=\mathcal{I}$. Then, under the given bases of $\mathbb{X}$ and $\mathbb{Z}$, any linear mapping $\mathcal{A}\!:\mathbb{X}\to\mathbb{Z}$ has a unique matrix representation  $A\!:=\mathcal{F}^{\dag}\mathcal{A}\mathcal{E}=[\mathcal{F}^{\dag}\mathcal{A}e_1\ \cdots\ \mathcal{F}^{\dag}\mathcal{A}e_p]\in\mathbb{R}^{r\times p}$, where $\mathcal{F}^{\dag}:\mathbb{Z}\to\mathbb{R}^r$ is the linear mapping defined in the same way as for $\mathcal{E}^{\dag}$. Thus, for any $v\in\mathbb{X}$, $\mathcal{A}v=\mathcal{F}\mathcal{F}^{\dag}\mathcal{A}\mathcal{E}\mathcal{E}^{\dag}v=\mathcal{F}A\mathcal{E}^{\dag}v$. In the sequel, for each $k\in\mathbb{N}$, let $Q^k\in\mathbb{R}^{r\times p}$ be the unique matrix representation of $F'(x^k):\mathbb{X}\to\mathbb{Z}$ under the above bases, so that 
\begin{equation}\label{gradFk}
 F'(x^k)v=\mathcal{F}Q^k\mathcal{E}^{\dag}v\ \ {\rm for}\ v\in\mathbb{X}\ \ {\rm and}\ \ \nabla F(x^k)z=(\mathcal{E}^{\dag})^{*}(Q^k)^{\top}\mathcal{F}^*z\ \ {\rm for}\ z\in\mathbb{Z}.
\end{equation}

Inspired by the structure of the objective function of subproblem \eqref{subprobj}, we define
\begin{equation}\label{Xi-fun}
\Xi(w)\!:=f(x)+\langle s,v\rangle+\vartheta(F(x)\!+\mathcal{F}Q\mathcal{E}^{\dag}v)+\delta_{T\mathcal{M}}(x,v)+\!\frac{\alpha^*}{2}\|v\|^2
\end{equation}
for $w=(x,v,Q,s)\in\mathbb{W}:=\mathbb{X}\times\mathbb{X}\times\mathbb{R}^{r\times p}\times\mathbb{X}$, 
where $\alpha^*$ is the same as in Proposition \ref{bound-alpk} (ii). Such  $\Xi$ can make the subsequent full convergence analysis independent of the twice continuous differentiability of $F$. Due to the manifold constraint, the potential function $\Xi$ does not enjoy decrease. This precludes using the recipe developed in \cite{Attouch2013,Bolte2014} and the analysis technique in \cite{tao2023inexact} to achieve the convergence of $\{x^k\}_{k\in\mathbb{N}}$. We get around this difficulty by building the bridge between the objective value of \eqref{prob} and the value of the potential function at the iterates. In the rest of this section, for each $k\in\mathbb{N}$, let $w^k\!:=(x^k,\overline{v}^k,Q^k,\nabla\!f(x^k))$. Then, $\{(x^k,\overline{v}^k)\}_{k\in\mathbb{N}}\subset\mathcal{M}\times T_{x^k}\mathcal{M}$ and $\{Q^k\}_{k\in\mathbb{N}}$ is bounded under Assumption \ref{ass1}. Furthermore, the first equality in \eqref{gradFk} implies that $\ell_{F}(x^k+\overline{v}^k;x^k)=F(x^k)+\mathcal{F}Q^k\mathcal{E}^{\dag}\overline{v}^k$, and consequently, for each $k\in\mathbb{N}$, 
\begin{equation}\label{Xiwk}
\Xi(w^k)=f(x^k)+\langle\nabla\!f(x^k),\overline{v}^k\rangle+\vartheta(\ell_{F}(x^k\!+\!\overline{v}^k;x^k))+\frac{\alpha^*}{2}\|\overline{v}^k\|^2.
\end{equation}
The following lemma discloses the relationship between $\Xi(w^k)$ with $\Theta(x^{k+1})$. 
\begin{lemma}\label{lemma-XiTheta}
 Under Assumption \ref{ass1}, there exist a compact convex set $\Gamma\subset\mathcal{O}$ with $\{x^k\}_{k\in\mathbb{N}}\subset\Gamma$, a compact set $D\subset\mathbb{Z}$ with $D\supset\{\ell_{F}(x^k\!+v^k;x^k)\}_{k\in\mathbb{N}}\cup\{\ell_{F}(x^{k+1};x^k)\}_{k\in\mathbb{N}}$, and an index $\overline{k}\in\mathbb{N}$ such that with $\widetilde{c}:=(1+b_{\vartheta}b_{\nabla\!F}+L_{\nabla\!f})M_1^2+2M_2(c_{\nabla\!f}\!+c_{\nabla\!F}L_{\vartheta})+\mu_{\rm max}$,
 \[
	\Theta(x^{k+1})\le\Xi(w^k)+\frac{\widetilde{c}}{2}\|v^k\|^2\quad{\rm for\ all}\ k\ge\overline{k},
 \]  
 where $L_{\nabla\!f}$ and $L_{\vartheta}$ are the Lipschitz constant of $\nabla\!f$ and $\vartheta$ on the sets $\Gamma$ and $D$, respectively, 
 \begin{equation}\label{ineq-const}
 b_{\vartheta}:=\sup_{k\in\mathbb{N}}{\rm lip}\,\vartheta(F(x^k)),\,b_{\nabla\!F}:=\sup_{k\in\mathbb{N}}{\rm lip}\,F'(x^k)\ \ {\rm and}\ \ c_{\nabla\!f}:=\sup_{k\in\mathbb{N}}\|\nabla\!f(x^k)\|, 
\end{equation}
and $M_1$ and $M_2$ are the constants from Lemma \ref{lemma-retract} for $\varLambda=\varLambda_0$ in Assumption \ref{ass1} and $\delta=1$.
\end{lemma}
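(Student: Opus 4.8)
The plan is to bound $\Theta(x^{k+1})=\Theta(R_{x^k}(v^k))$ from above in two stages. First, transfer the estimate from the retracted iterate $x^{k+1}$ back to the tangent‑space linearized model of \eqref{subprobj} evaluated at $v^k$, incurring only $O(\|v^k\|^2)$ errors via the retraction estimates of Lemma \ref{lemma-retract}, the descent lemma for $f$, and Lemma \ref{local-lip1} for $\vartheta\circ F$. Second, replace $v^k$ by the exact subproblem solution $\overline v^k$ (the vector entering $\Xi(w^k)$) using the objective‑value inexactness in \eqref{inexact-cond} together with the uniform bound $\mathcal Q_k\preceq\alpha^*\mathcal I$ from Proposition \ref{bound-alpk} (iii). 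It is essential that this second swap be carried out through the subproblem objective gap rather than through $\|v^k-\overline v^k\|\le(\alpha_{\rm min}^{-1}\mu_{\rm max})^{1/2}\|v^k\|$ of Proposition \ref{prop-xk} (i), which would only produce an $O(\|v^k\|)$ error; this is exactly why $\Xi$ in \eqref{Xi-fun} carries the constant coefficient $\alpha^*$ instead of $\mathcal Q_k$.

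First I would fix the data. By Assumption \ref{ass1} and Proposition \ref{bound-alpk} (i)--(ii), the sequences $\{x^k\}_{k\in\mathbb N}$ and $\{v^k\}_{k\in\mathbb N}$ are bounded; since $\mathcal M\subseteq\mathcal O$ with $\mathcal O$ open and convex, there is a compact convex $\Gamma\subset\mathcal O$ with $\{x^k\}_{k\in\mathbb N}\subset\Gamma$, on which $\nabla\!f$ has a Lipschitz modulus $L_{\nabla\!f}$; the set $\{\ell_F(x^k+v^k;x^k)\}_k\cup\{\ell_F(x^{k+1};x^k)\}_k$, bounded because $\{x^k\},\{v^k\}$ are bounded and $F,F'$ are continuous, lies in a compact $D\subset\mathbb Z$, on which $\vartheta$ has a Lipschitz modulus $L_\vartheta$; and there is a compact $\varLambda_0\subset\mathcal M$ containing $\{x^k\}_{k\in\mathbb N}$, for which Lemma \ref{lemma-retract} with $\delta=1$ yields constants $M_1,M_2$. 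Since $v^k\to0$ by Proposition \ref{prop-xk} (iii), hence $\|x^{k+1}-x^k\|\le M_1\|v^k\|\to0$, I would choose $\overline k$ so large that for $k\ge\overline k$ one has $\|v^k\|\le1$ and $x^{k+1}$ lies in the neighbourhood of $x^k$ on which the descent lemma (Beck \cite[Lemma 5.7]{Beck2017}) for $f$ with modulus $L_{\nabla\!f}$ and Lemma \ref{local-lip1} with the choice $\alpha=b_\vartheta b_{\nabla\!F}+1$ (admissible because $b_\vartheta b_{\nabla\!F}\ge{\rm lip}\,\vartheta(F(x^k))\,{\rm lip}\,F'(x^k)$) are valid.

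For $k\ge\overline k$, write $\Theta(x^{k+1})=f(x^{k+1})+\vartheta(F(x^{k+1}))$. For the first term, the descent lemma on $\Gamma$ together with $\|x^{k+1}-x^k\|\le M_1\|v^k\|$ and $\|x^{k+1}-x^k-v^k\|\le M_2\|v^k\|^2$ from Lemma \ref{lemma-retract} gives $f(x^{k+1})\le f(x^k)+\langle\nabla\!f(x^k),v^k\rangle+(c_{\nabla\!f}M_2+\tfrac12 L_{\nabla\!f}M_1^2)\|v^k\|^2$. For the second term, Lemma \ref{local-lip1} at $\overline x=x^k$ applied to $x^{k+1}$ bounds $\vartheta(F(x^{k+1}))$ by $\vartheta(\ell_F(x^{k+1};x^k))+\tfrac12(b_\vartheta b_{\nabla\!F}+1)M_1^2\|v^k\|^2$, and the $L_\vartheta$‑Lipschitz property of $\vartheta$ on $D$, with $\|\ell_F(x^{k+1};x^k)-\ell_F(x^k+v^k;x^k)\|=\|F'(x^k)(x^{k+1}-x^k-v^k)\|\le c_{\nabla\!F}M_2\|v^k\|^2$, replaces $\ell_F(x^{k+1};x^k)$ by $\ell_F(x^k+v^k;x^k)$ at the cost of $L_\vartheta c_{\nabla\!F}M_2\|v^k\|^2$. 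Adding these two bounds and recalling the form of $\Theta_k$ in \eqref{subprobj}, one gets \[ \Theta(x^{k+1})\le\Theta_k(v^k)-\tfrac12\|v^k\|_{\mathcal Q_k}^2+\tfrac12(\widetilde c-\mu_{\rm max})\|v^k\|^2. \] Now the second inequality in \eqref{inexact-cond} and $\mu_k\le\mu_{\rm max}$ give $\Theta_k(v^k)\le\Theta_k(\overline v^k)+\tfrac{\mu_{\rm max}}{2}\|v^k\|^2$; and since $(x^k,\overline v^k)\in T\mathcal M$ makes the indicator in \eqref{Xi-fun} vanish, $\Theta_k(\overline v^k)=\Xi(w^k)-\tfrac12\alpha^*\|\overline v^k\|^2+\tfrac12\|\overline v^k\|_{\mathcal Q_k}^2\le\Xi(w^k)$ by $\|\mathcal Q_k\|\le\alpha^*$ (Proposition \ref{bound-alpk} (iii)). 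Dropping the nonpositive term $-\tfrac12\|v^k\|_{\mathcal Q_k}^2$ then yields $\Theta(x^{k+1})\le\Xi(w^k)+\tfrac12\widetilde c\,\|v^k\|^2$, as claimed.

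The chain of first/second‑order Taylor estimates is routine. The two points that need genuine care are: (a) making every modulus uniform in $k$ along the tail of the iterate sequence — this is where Assumption \ref{ass1}, the boundedness statements of Proposition \ref{bound-alpk}, and $v^k\to0$ from Proposition \ref{prop-xk} (iii) enter, and it is what forces the introduction of the index $\overline k$ and of the compact sets $\Gamma,D,\varLambda_0$; and (b) recognizing that the passage from $v^k$ to $\overline v^k$ must be done through the objective gap in \eqref{inexact-cond} and the uniform estimate $\mathcal Q_k\preceq\alpha^*\mathcal I$, not through a Lipschitz bound on $v^k-\overline v^k$, in order to keep this error at order $\|v^k\|^2$.
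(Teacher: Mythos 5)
Your proof is correct and follows essentially the same route as the paper's: bound $f(x^{k+1})$ by the descent lemma on $\Gamma$, transfer $\vartheta(F(x^{k+1}))$ to the linear model via Lemma \ref{local-lip1} and the $L_\vartheta$-Lipschitz bound on $D$, use the retraction estimates of Lemma \ref{lemma-retract} for the $O(\|v^k\|^2)$ corrections, and then pass from $v^k$ to $\overline v^k$ through the objective gap of \eqref{inexact-cond} and $\mathcal Q_k\preceq\alpha^*\mathcal I$ (the paper does this by adding the nonnegative quantity $\Theta_k(\overline v^k)-\Theta_k(v^k)+\tfrac{\mu_k}{2}\|v^k\|^2$ to its bound; you do it by the equivalent chain $\Theta_k(v^k)\le\Theta_k(\overline v^k)+\tfrac{\mu_{\rm max}}{2}\|v^k\|^2\le\Xi(w^k)+\tfrac{\mu_{\rm max}}{2}\|v^k\|^2$). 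The constants you accumulate match $\widetilde c$ exactly, and your observation that the swap must go through the objective-value inexactness rather than $\|v^k-\overline v^k\|$ is precisely what the paper's proof exploits as well.
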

\begin{proof}
 Since the sequence $\{x^k\}_{k\in\mathbb{N}}\subset\mathcal{M}\subset\mathcal{O}$ is bounded by Assumption \ref{ass1}, there exists a compact convex set $\Gamma\subset\mathcal{O}$ such that $\{x^k\}_{k\in\mathbb{N}}\subset\Gamma$.
 Let $D\subset\mathbb{Z}$ be a compact set containing $\{\ell_{F}(x^k\!+\!v^k;x^k)\}_{k\in\mathbb{N}}$ and $\{\ell_{F}(x^{k+1};x^k)\}_{k\in\mathbb{N}}$. Obviously, such a set $D$ exists by the boundedness of the two sequences. Along with Assumption \ref{ass0}, the constants $L_{\nabla\!f}$ and $L_{\vartheta}$ are well defined. In addition, from \cite[Theorem 9.2]{RW98} and Assumption \ref{ass0}, the sequences $\{{\rm lip}F'(x^k)\}_{k\in\mathbb{N}}$ and $\{{\rm lip}\,\vartheta(F(x^k))\}_{k\in\mathbb{N}}$ are bounded from above, so the constants in \eqref{ineq-const} are well defined. From the definition of $\Theta$ and the descent lemma for $f$, for each $k\in\mathbb{N}$, 
 \begin{equation*}
 \Theta(x^{k+1})\le f(x^k)+\langle\nabla\!f(x^k),x^{k+1}\!-x^k\rangle+\frac{L_{\nabla\!f}}{2}\|x^{k+1}\!-x^k\|^2+\vartheta(F(x^{k+1})).
 \end{equation*} 
From the second inequality of \eqref{inexact-cond} and Proposition \ref{bound-alpk} (ii), for each $k\in\mathbb{N}$, it holds 
\begin{align*}
 0\le\Theta_{k}(\overline{v}^k)-\Theta_k(v^k)+\frac{\mu_k}{2}\|v^k\|^2&\le\langle\nabla\!f(x^k),\overline{v}^k\!-\!v^k\rangle+\frac{\alpha^*}{2}\|\overline{v}^k\|^2+\frac{\mu_k}{2}\|v^k\|^2\\
 &\quad +\vartheta(\ell_{F}(x^k\!+\overline{v}^k;x^k))-\vartheta(\ell_{F}(x^k\!+v^k;x^k)).
 \end{align*}
 Combining the above two inequalities with equation \eqref{Xiwk}, for each $k\in\mathbb{N}$, we have
 \begin{align}\label{ineq42-Theta}
 \Theta(x^{k+1})&\le f(x^k)+\langle\nabla\!f(x^k),x^{k+1}-x^k\rangle+\frac{L_{\nabla\!f}}{2}\|x^{k+1}-x^k\|^2+\vartheta(F(x^{k+1}))+\frac{\mu_k}{2}\|v^k\|^2\nonumber\\
 &\quad+\langle\nabla\!f(x^k),\overline{v}^k\!-\!v^k\rangle+\frac{\alpha^*}{2}\|\overline{v}^k\|^2+\vartheta(\ell_{F}(x^k\!+\overline{v}^k;x^k))-\vartheta(\ell_{F}(x^k\!+v^k;x^k))\nonumber\\
 &=\Xi(w^k)+\langle\nabla\!f(x^k),x^{k+1}\!-\!x^k\!-\!v^k\rangle+\frac{L_{\nabla\!f}}{2}\|x^{k+1}\!-\!x^k\|^2\nonumber\\
 &\quad\ +\vartheta(F(x^{k+1}))-\vartheta(\ell_{F}(x^k\!+v^k;x^k))+\frac{{\mu_k}}{2}\|v^k\|^2.
 \end{align}
 Since $\lim_{k\to\infty}v^k=0$ by Proposition \ref{prop-xk} (iii) and $x^{k+1}=R_{x^k}(v^k)$ for all $k$, invoking Lemma \ref{lemma-retract} with $\varLambda=\varLambda_0$ and $\delta=1$, there exist $M_1>0,M_2>0$ and  $\overline{k}\in\mathbb{N}$ such that for all $k\ge\overline{k}$, 
 \begin{equation}\label{diffxk}
 \|x^{k+1}-x^k\|\le M_1\|v^k\|\ \ {\rm and}\ \ \|x^{k+1}-x^k-v^k\|\le M_2\|v^k\|^2.
 \end{equation}
 Clearly, $\lim_{k\to\infty}(x^{k+1}-x^k)=0$. From the limit and \eqref{ineq30-ThetaF}, if necessary by increasing $\overline{k}$,  
 \[
 \vartheta(F(x^{k+1}))\le\vartheta(\ell_{F}(x^{k+1};x^k))+\frac{1}{2}\big[{\rm lip}\,\vartheta(F(x^k))\,{\rm lip}\,F'(x^k)\!+\!1\big]\|x^{k+1}\!-x^k\|^2\quad\forall k\ge\overline{k}.
 \]
 The Lipschitz continuity of $\vartheta$ on $D$ with Lipschitz constant $L_{\vartheta}$ implies that for all $k\ge\overline{k}$, 
 \[
  \vartheta(\ell_{F}(x^{k+1};x^k))- \vartheta(\ell_{F}(x^{k}\!+v^k;x^k))
	\le L_{\vartheta}\|F'(x^k)(x^{k+1}\!-x^k-v^k)\|.
 \]
 Adding the last two inequalities together and using equations \eqref{ineq-const} and \eqref{diffxk} leads to  
 \begin{equation*}
 \vartheta(F(x^{k+1}))\le \vartheta(\ell_{F}(x^{k}\!+v^k;x^k))+\frac{1}{2}[(1\!+\!b_{\vartheta}b_{\nabla\!F})M_1^2+2c_{\nabla\!F}L_{\vartheta}M_2]\|v^k\|^2\quad\forall k\ge\overline{k}. 
 \end{equation*} 
 Combining this inequality with the above \eqref{ineq42-Theta}-\eqref{diffxk}, for each $k\in\mathbb{N}$, it holds that
 \begin{align*}
 \Theta(x^{k+1})&\le \Xi(w^k)+c_{\nabla\!f}M_2\|v^k\|^2+\!\frac{1}{2}(L_{\nabla\!f}M_1^2+\mu_{\rm max})\|v^k\|^2\\  &\quad+\frac{1}{2}[(1\!+\!b_{\vartheta}b_{\nabla\!F})M_1^2+2c_{\nabla\!F}L_{\vartheta}M_2]\|v^k\|^2,
 \end{align*}
 which by the definition of $\widetilde{c}$ implies the desired result. The proof is completed. 
\end{proof}

In view of Lemma \ref{lemma-XiTheta}, now we define the potential function $\Xi_{\widetilde{c}}:\mathbb{W}\times\mathbb{X}\to\overline{\mathbb{R}}$ by 
\begin{equation}\label{Xict}
 \Xi_{\widetilde{c}}(u):=\Xi(w)+\frac{\widetilde{c}}{2}\|\eta\|^2\quad\forall u=(w,\eta)\in\mathbb{U}:=\mathbb{W}\times\mathbb{X}, 
\end{equation}
where $\widetilde{c}$ is the constant appearing in Lemma \ref{lemma-XiTheta}. For each $k\in\mathbb{N}$, write $u^k:=(w^k,v^k)$. Denote the sets of cluster points of $\{w^k\}_{k\in\mathbb{N}}$ and $\{u^k\}_{k\in\mathbb{N}}$ as $W^*$ and $U^*$, respectively. 
\begin{proposition}\label{prop-Xi}
 Under Assumption \ref{ass1}, the following assertions hold.  
 \begin{description}
 \item[(i)] $W^*$ and $U^*$ are nonempty and compact, every $w^*\!=(x^*,\overline{v}^*,Q^*,s^*)\in W^*$ satisfies $\overline{v}^*=0$, $x^*\in\mathcal{M}$ and $s^*=\nabla\!f(x^*)$, and $u^*\in U^*$ if and only if $u^*=(w^*,0)$ for some $w^*\in W^*$;

 \item[(ii)] $W^*\subset(\partial\Xi)^{-1}(0)$;

 \item[(iii)] $\lim_{k\to\infty}\Xi(w^k)=\varsigma^*=\lim_{k\to\infty}\Xi_{\widetilde{c}}(u^k)$, where $\varsigma^*$ is the same as in Proposition \ref{prop-xk} (ii);

 \item[(iv)] $\Xi(w)=\varsigma^*$ for all $w\in W^*$ and $\Xi_{\widetilde{c}}(u)=\varsigma^*$ for all $u\in U^*$.
 \end{description}	
\end{proposition}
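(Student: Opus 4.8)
The plan is to read everything off from facts already in hand: the boundedness of $\{x^k\}_{k\in\mathbb{N}}\subset\mathcal{M}$ (Proposition \ref{bound-alpk} (i)), the limits $\lim_{k\to\infty}v^k=\lim_{k\to\infty}\overline{v}^k=0$ (Proposition \ref{prop-xk} (iii)), the convergence $\Theta(x^k)\to\varsigma^*$ (Proposition \ref{prop-xk} (ii)), and the continuity/local Lipschitz continuity of $f,F,F',\nabla\!f,\vartheta$ on compact sets built from $\{x^k\}_{k\in\mathbb{N}}$. Since $\mathcal{M}$ is closed, ${\rm cl}\,\{x^k\}_{k\in\mathbb{N}}$ is a compact subset of $\mathcal{M}\subset\mathcal{O}$, so $\{\nabla\!f(x^k)\}_{k\in\mathbb{N}}$ and $\{F'(x^k)\}_{k\in\mathbb{N}}$ are bounded. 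For part (i), I would first note that $\{w^k\}_{k\in\mathbb{N}}$ is bounded (using $\overline{v}^k\to0$ and the boundedness of $\{\nabla\!f(x^k)\}_{k\in\mathbb{N}}$) and that $\{u^k\}_{k\in\mathbb{N}}$ is bounded (additionally using $v^k\to0$), so $W^*$ and $U^*$ are nonempty; they are also closed and bounded, being cluster point sets of bounded sequences, hence compact. For the structure of $w^*=(x^*,\overline{v}^*,s^*)\in W^*$: pick $w^{k_i}\to w^*$; then $x^{k_i}\to x^*\in\mathcal{M}$ by closedness of $\mathcal{M}$, $\overline{v}^*=\lim_i\overline{v}^{k_i}=0$, so $(x^*,\overline{v}^*)=(x^*,0)\in T\mathcal{M}$ (trivially, or from the closedness of $T\mathcal{M}$ in Lemma \ref{lemma-Tb}), and $s^*=\lim_i\nabla\!f(x^{k_i})=\nabla\!f(x^*)$ by continuity of $\nabla\!f$. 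The equivalence for $U^*$ follows because $v^k\to0$ forces any cluster point of $\{u^k=(w^k,v^k)\}_{k\in\mathbb{N}}$ to be of the form $(w^*,0)$ with $w^*\in W^*$, and conversely $w^{k_i}\to w^*$ implies $u^{k_i}\to(w^*,0)$, so $(w^*,0)\in U^*$.

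For part (ii), since $\overline{v}^k\in T_{x^k}\mathcal{M}$, the indicator term in $\Xi(w^k)$ vanishes, leaving $\Xi(w^k)=f(x^k)+\langle\nabla\!f(x^k),\overline{v}^k\rangle+\vartheta(F(x^k)+F'(x^k)\overline{v}^k)+\frac{1}{2}\alpha^*\|\overline{v}^k\|^2$. I would then use $\overline{v}^k\to0$ and the boundedness of $\{\nabla\!f(x^k)\}_{k\in\mathbb{N}}$ and $\{F'(x^k)\}_{k\in\mathbb{N}}$ to kill the inner-product and quadratic terms, and the local Lipschitz continuity of the finite-valued convex $\vartheta$ on a bounded set containing $\{F(x^k)\}_{k\in\mathbb{N}}$ and $\{F(x^k)+F'(x^k)\overline{v}^k\}_{k\in\mathbb{N}}$ (which are bounded with difference tending to $0$) to get $\vartheta(F(x^k)+F'(x^k)\overline{v}^k)-\vartheta(F(x^k))\to0$. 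Hence $\Xi(w^k)-\Theta(x^k)\to0$, so $\Xi(w^k)\to\varsigma^*$, and $\Xi_{\widetilde{c}}(u^k)=\Xi(w^k)+\frac{1}{2}\widetilde{c}\|v^k\|^2\to\varsigma^*$ since $v^k\to0$.

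For part (iii), I would use part (i) to write any $w^*\in W^*$ as $w^*=(x^*,0,\nabla\!f(x^*))$ with $x^*\in\mathcal{M}$, so that $\ell_F(x^*;x^*)=F(x^*)$, $\delta_{T\mathcal{M}}(x^*,0)=0$, and therefore $\Xi(w^*)=f(x^*)+\vartheta(F(x^*))=\Theta(x^*)$. Taking a subsequence $x^{k_i}\to x^*$ and invoking the continuity of $\Theta$ on $\mathcal{O}$ together with $\Theta(x^k)\to\varsigma^*$ gives $\Theta(x^*)=\varsigma^*$, whence $\Xi(w^*)=\varsigma^*$; and for $u^*\in U^*$, part (i) gives $u^*=(w^*,0)$, so $\Xi_{\widetilde{c}}(u^*)=\Xi(w^*)=\varsigma^*$. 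There is no serious obstacle here; the only points needing attention are that the indicator term stays identically zero both along the sequence and at cluster points (because $\overline{v}^k\in T_{x^k}\mathcal{M}$ and, if one prefers to argue at the limit, $T\mathcal{M}$ is closed by Lemma \ref{lemma-Tb}), and that the continuity and Lipschitz estimates are applied on compact sets that genuinely contain the relevant tails of the bounded iterate sequence.
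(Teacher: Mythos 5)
Your proof is correct. Items (i) and (iii) follow essentially the same route as the paper: (i) is read off from the boundedness of $\{x^k\}$, the limits $v^k,\overline{v}^k\to 0$, closedness of $\mathcal{M}$ and $T\mathcal{M}$, and continuity of $\nabla\!f$; (iii) passes to the limit along a subsequence and uses continuity of $\Theta$, just as in the paper.

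For item (ii), your argument is genuinely different from the paper's and is, if anything, more direct. You show $\Xi(w^k)-\Theta(x^k)\to0$ term by term (the inner product, the quadratic term, and the $\vartheta$-difference via local Lipschitz continuity of the finite convex $\vartheta$ on a compact set containing both $\{F(x^k)\}$ and $\{\ell_F(x^k+\overline{v}^k;x^k)\}$), and then invoke $\Theta(x^k)\to\varsigma^*$. The paper instead sandwiches $\Xi(w^k)$: the upper bound $\Xi(w^k)\le\Theta_k(\overline{v}^k)+\tfrac{1}{2}\alpha^*\|\overline{v}^k\|^2\le\Theta(x^k)+\tfrac{1}{2}\alpha^*\|\overline{v}^k\|^2$ uses $\Theta_k(\overline{v}^k)\le\Theta_k(0)$ and $\mathcal{Q}_k\preceq\alpha^*\mathcal{I}$, while the lower bound $\Xi(w^k)\ge\Theta(x^{k+1})-\tfrac{1}{2}\widetilde{c}\|v^k\|^2$ is imported from Lemma~\ref{lemma-XiTheta}, and both bounding sequences converge to $\varsigma^*$. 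Your version avoids invoking Lemma~\ref{lemma-XiTheta} altogether, which makes the argument for Proposition~\ref{prop-Xi} more self-contained; the paper's sandwich has the minor bookkeeping advantage of simultaneously re-using a lemma it already needs in the KL analysis, and of only requiring the estimates for $k\geq\overline{k}$, but both routes are valid and land on the same conclusion.
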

\begin{proof}
 {\bf(i)-(ii)} Item (i) is direct by Assumptions \ref{ass0} and \ref{ass1} and Proposition \ref{prop-xk} (iii). We prove item (ii). By the expression of $\Xi$, at any $w\!=(x,v,Q,s)\in\mathcal{M}\times T_{x}\mathcal{M}\times\mathbb{R}^{r\times p}\times\mathbb{X}$, 
 \begin{equation}\label{subdiff-Xi}
\!\partial\Xi(w)=\!\left[\begin{pmatrix}
 N_{(x,v)}T\mathcal{M}\\ 0   
\end{pmatrix}\!+\!\begin{pmatrix}
  \nabla\!f(x)\\
   s+\alpha^*v \\
    0
 \end{pmatrix}\!+\!\nabla P(x,v,Q)\partial\vartheta(P(x,v,Q))\right]\times\!\big\{v\big\},
\end{equation}
where $P(x,v,Q):=F(x)\!+\mathcal{F}Q\mathcal{E}^{\dag}v$ for $(x,v,Q)\in\mathbb{X}\times\mathbb{X}\times\mathbb{R}^{r\times p}$. A simple calculation yields
\begin{equation}\label{subdiff-H}
 \nabla P(x,v,Q)\Delta z=\big[\nabla F(x)\Delta z;(\mathcal{E}^{\dag})^*Q^{\top}\mathcal{F}^*\Delta z;\mathcal{F}^*\Delta zv^{\top}(\mathcal{E}^{\dag})^*\big]\quad\forall\Delta z\in\mathbb{Z}.
\end{equation}
Pick any $w^*=(x^*,\overline{v}^*,Q^*,s^*)\in W^*$. Then $x^*\in\mathcal{M},\,\overline{v}^*=0$ and $s^*=\nabla\!f(x^*)$ by item (i). While from the first part of Lemma \ref{lemma-Tb}, it follows $N_{(x^*,\overline{v}^*)}T\mathcal{M}=N_{x^*}\mathcal{M}\times N_{x^*}\mathcal{M}$. Now invoking the equality \eqref{subdiff-Xi} with $w=w^*$ and noting that $P(x^*,\overline{v}^*,Q^*)=F(x^*)$, we have
\[
\!\partial\Xi(w^*)=\!\left[\begin{pmatrix}
 N_{x^*}\mathcal{M}\\ N_{x^*}\mathcal{M}\\0   
\end{pmatrix}\!+\!\begin{pmatrix}
  \nabla\!f(x^*)\\
   \nabla\!f(x^*)\\
    0
 \end{pmatrix}\!+\!\begin{pmatrix}
  \nabla\!F(x^*)\\
   \nabla\!F(x^*)\\
    0
 \end{pmatrix}\partial\vartheta(F(x^*))\right]\times\!\big\{0\big\}.
\]
Recall that $x^*$ is a stationary point of \eqref{prob} by Theorem \ref{theorem1}. From Definition \ref{def-spoint} (i), we have
\begin{equation*}
 0\in\nabla\!f(x^*)+\nabla\!F(x^*)\partial \vartheta(F(x^*))+N_{x^*}\mathcal{M}.
\end{equation*}
The last two equations imply $0\in\partial\Xi(w^*)$, and the desired inclusion $W^*\subset(\partial\Xi)^{-1}(0)$ holds. 

\noindent
{\bf(iii)-(iv)} For each $k\in\mathbb{N}$, from the expression of $\Xi(w^k)$ in  \eqref{Xiwk} and the definition $\Theta_k$, 
\begin{align}\label{ineq41-Xi-Theta}
 \Xi(w^k)&=\Theta_k(\overline{v}^k)+\frac{1}{2}(\alpha^*\|\overline{v}^k\|^2-\|\overline{v}^k\|^2_{\mathcal{Q}_k})\leq \Theta_k(\overline{v}^k)+\frac{\alpha^*}{2}\|\overline{v}^k\|^2\nonumber\\
 &\leq \Theta_k(0)+\frac{\alpha^*}{2}\|\overline{v}^k\|^2=\Theta(x^k)+\frac{\alpha^*}{2}\|\overline{v}^k\|^2.
 \end{align}
 On the other hand, from Lemma \ref{lemma-XiTheta}, $\Xi(w^k)\ge\Theta(x^{k+1})-\frac{\widetilde{c}}{2}\|v^k\|^2$ for all $k\ge\overline{k}$. Passing to the limit $k\to\infty$ in the inequality and \eqref{ineq41-Xi-Theta}, and using Proposition \ref{prop-xk} (ii)-(iii) leads to item (ii). For item (iii), pick any $w^*=(x^*,\overline{v}^*,Q^*,s^*)\in W^*$. Then there exists an index set $\mathcal{K}\subset\mathbb{N}$ such that $w^*=\lim_{\mathcal{K}\ni k\to\infty}w^k$. 
Taking the limit as $\mathcal{K}\ni k\to\infty$ in \eqref{Xiwk} and using $\lim_{k\to\infty}\overline{v}^k=0$ and Assumption \ref{ass0} leads to $\Xi(w^*)=f(x^*)+\vartheta(F(x^*))=\Theta(x^*)=\lim_{\mathcal{K}\ni k\to\infty}\Theta(x^k)=\varsigma^*$. From the definition of $\Xi_{\widetilde{c}}$ and the last part of item (i), $\Xi_{\widetilde{c}}(u)=\varsigma^*$ for all $u\in U^*$. 
\end{proof}
\subsection{Full convergence}\label{sec5.1}

To achieve the full convergence of $\{x^k\}_{k\in\mathbb{N}}$ under the KL framework, we also need to bound the approximate stationarity of $u^k$ to the minimization of $\Xi_{\widetilde{c}}$ by using $\|v^k\|$, i.e., to characterize the relative error condition for the potential function $\Xi_{\widetilde{c}}$. This is a nontrivial task due to the difficulty caused by the manifold constraint. 
\begin{proposition}\label{sdiff-gap}
 Under Assumption \ref{ass1}, there exists a constant $\widehat{\gamma}>0$ such that for all $k\ge\overline{k}$, ${\rm dist}(0,\partial\,\Xi_{\widetilde{c}}(u^k))\le\!(\widehat{\gamma}+\widetilde{c})\|v^k\|$, where $\overline{k}$ is the same as the one in Lemma \ref{lemma-XiTheta}.
\end{proposition}
\begin{proof}
From $\{(x^k,\overline{v}^k)\}_{k\in\mathbb{N}}\subset T\mathcal{M}$ and Corollary \ref{corollary-normalTM}, there exist a compact set $\varLambda\supset\{x^k\}_{k\in\mathbb{N}}$, an index $l\in\mathbb{N}^*$, $\overline{x}^{1},\ldots,\overline{x}^{l}\in\varLambda$, real numbers $\varepsilon_{\overline{x}^{1}}>0,\ldots,\varepsilon_{\overline{x}^{l}}>0$, and $\mathcal{C}^{2}$-smooth mappings $G_i:=G_{\overline{x}^{i}}:\mathbb{X}\to \mathbb{Y}$ for $i\in[l]_{+}$ such that for each $i\in[l]_{+}$ and $x\in\mathbb{B}(\overline{x}^{i},\varepsilon_{\overline{x}^{i}})$, $G_{i}'(x):\mathbb{X}\to \mathbb{Y}$ is surjective, and for each $k\in\mathbb{N}$ there is an index $j_k\in[l]_{+}$ such that 
\begin{subnumcases}{}\label{Normalk}
 x^k\in\mathcal{M}\cap\mathbb{B}(\overline{x}^{j_k},\varepsilon_{\overline{x}^{j_k}}),\,N_{x^k}\mathcal{M}=\big\{\nabla G_{j_k}(x^k) y\ |\ y\in\mathbb{Y}\big\},\qquad\qquad\qquad\\
 \label{Tbundlek}
 N_{(x^k,\overline{v}^k)}T\mathcal{M}=\left\{\begin{pmatrix}
			\nabla G_{j_k}(x^k)\xi+[D^2G_{j_k}(x^k)\overline{v}^k]^*\zeta\\
			\nabla G_{j_k}(x^k)\zeta
 \end{pmatrix}\ |\ \xi\in\mathbb{Y},\zeta\in\mathbb{Y}\right\}.
\end{subnumcases}
For each $k\in\mathbb{N}$, by \eqref{optim-cond} and  \eqref{Normalk}, there exist $\xi^k\in \partial \vartheta(\ell_{F}(x^k\!+\overline{v}^k;x^k))$ and $y^k\in\mathbb{Y}$ such that 
\begin{equation}\label{zeros}
 \nabla\!f(x^k)+\mathcal{Q}_k\overline{v}^k+\nabla F(x^k)\xi^k+\nabla G_{j_k}(x^k)y^k=0.
\end{equation}
Note that $\ell_{F}(x^k\!+\overline{v}^k;x^k)=P(x^k,\overline{v}^k,Q^k)$, where the mapping $P$ is the same as the one in the proof of Proposition \ref{prop-Xi}. Hence,  $\xi^k\in\partial\vartheta(P(x^k,\overline{v}^k,Q^k))$ for each $k\in\mathbb{N}$.
Next we use equations \eqref{subdiff-Xi}-\eqref{subdiff-H} and \eqref{Normalk}-\eqref{zeros} to prove the conclusion by the following two steps.

\noindent
{\bf Step 1: to prove that $\{y^k\}_{k\in\mathbb{N}}$ is bounded}. Since $\vartheta$ is locally Lipschitz on $\mathbb{Z}$, the boundedness of $\{\ell_{F}(x^k\!+\!\overline{v}^k;x^k)\}_{k\in\mathbb{N}}$ and \cite[Theorem 9.13 $\&$ Proposition 5.15]{RW98} imply the boundedness of $\{\xi^k\}_{k\in \mathbb{N}}$, so does that of the sequence $\{\nabla\!f(x^k)\!+\!\mathcal{Q}_k\overline{v}^k+\!\nabla F(x^k)\xi^k\}_{k\in\mathbb{N}}$. Suppose on the contrary that $\{y^k\}_{k\in\mathbb{N}}$ is unbounded, i.e., there exists an infinite index set $\mathcal{K}\subset\mathbb{N}$ such that $\lim_{\mathcal{K}\ni k\to\infty}\|y^k\|=\infty$. If necessary by taking a subset of $\mathcal{K}$, we can assume that $\lim_{\mathcal{K}\ni k\to\infty}\frac{y^k}{\|y^k\|}=\overline{y}$ for some $\overline{y}\in\mathbb{Y}$ with $\|\overline{y}\|=1$. Since the index set $[l]_{+}$ is finite, from $j_k\in[l]_{+}$ for each $k$, there necessarily exist an index set $\mathcal{K}_1\subset\mathcal{K}$ and an index $i_0\in[l]_{+}$ such that for each $k\in\mathcal{K}_1$, $x^k\in\mathcal{M}\cap\mathbb{B}(\overline{x}^{i_0},\varepsilon_{\overline{x}^{i_0}})$ and $G_{j_k}=G_{i_0}$. From the boundedness of $\{x^k\}_{k\in\mathbb{N}}$, if necessary by shrinking the index set $\mathcal{K}_1$, we can assume that $\lim_{\mathcal{K}_1\ni k\to\infty}x^k=x^*\in\mathcal{M}\cap\overline{\mathbb{B}}(\overline{x}^{i_0},\varepsilon_{\overline{x}^{i_0}})$, where the inclusion is due to the closedness of $\mathcal{M}$. Together with the above \eqref{zeros}, for sufficiently large $k\in\mathcal{K}_1$, 
\[
 \frac{\nabla\!f(x^k)+\nabla F(x^k)\xi^k+\mathcal{Q}_k\overline{v}^k}{\|y^k\|}+\nabla G_{i_0}(x^k)\frac{y^k}{\|y^k\|}=0.
\]
 Passing to the limit $\mathcal{K}_1\ni k\to\infty$ in this equality and using the smoothness of $G_{i_0}$ leads to $\nabla G_{i_0}(x^*)\overline{y}=0$, which along with the surjectivity of $G_{i_0}'(x^*)\!:\mathbb{X}\to\mathbb{Y}$ yields $\overline{y}=0$, a contradiction to $\|\overline{y}\|=1$. The boundedness of $\{y^k\}_{k\in\mathbb{N}}$ follows. 

 \noindent
{\bf Step 2: to establish the desired bound.}
 For each $k\in\mathbb{N}$, we introduce the notation
 \begin{align*}
 \omega_{x}^k:=\nabla\!f(x^k)+\nabla F(x^k)\xi^k\!+\!\big[\nabla G_{j_k}(x^k)\!+\!(D^2G_{j_k}(x^k)\overline{v}^k)^*\big]y^k,\qquad\qquad\\
 \omega_{v}^k:=\nabla f(x^k)+(\mathcal{E}^{\dag})^*(Q^k)^{\top}\mathcal{F}^*\xi^k+\alpha^*\overline{v}^k+\nabla G_{j_k}(x^k)y^k\ \ {\rm and}\ \ 
 \omega_Q^k:=\mathcal{F}^*\xi^k(\overline{v}^k)^{\top}(\mathcal{E}^{\dag})^*. 
 \end{align*} 
 Recall that $\xi^k\in\partial\vartheta(P(x^k,\overline{v}^k,Q^k))$ and $w^k=(x^k,\overline{v}^k,Q^k,\nabla\!f(x^k))$ for each $k\in\mathbb{N}$. Comparing with \eqref{subdiff-Xi}-\eqref{subdiff-H} and \eqref{Normalk}-\eqref{Tbundlek}, we have $(\omega_{x}^k,\omega_{v}^k,\omega_{Q}^k,\overline{v}^k)\in\partial\Xi(w^k)$. Further, by \eqref{zeros}, 
 \begin{equation}\label{temp-ghk}
 \omega_{x}^k=(D^2G_{j_k}(x^k)\overline{v}^k)^*y^k-\mathcal{Q}_k\overline{v}^k,\,  \omega_{v}^k=\alpha^*\overline{v}^k-\mathcal{Q}_k\overline{v}^k\ \ {\rm and}\ \ \omega_Q^k:=\mathcal{F}^*\xi^k(\overline{v}^k)^{\top}(\mathcal{E}^{\dag})^*.
 \end{equation}
 where the second equality is obtained by using the second equality of \eqref{gradFk}. Since $\{x^k\}_{k\in\mathbb{N}}$ and $\{y^k\}_{k\in \mathbb{N}}$ are bounded, from the $\mathcal{C}^2$-smoothness of $G_i$ for each $i\in[l]_{+}$, there exists $\widehat{c}_1>0$ such that  $\|(D^2G_{j_k}(x^k)\overline{v}^k)^*y^k\|\le \widehat{c}_1\|\overline{v}^k\|$ for all $k\in\mathbb{N}$. Together with the first equality of \eqref{temp-ghk}, Proposition \ref{bound-alpk} (ii) and Proposition \ref{prop-xk} (i), it follows for each $k\in\mathbb{N}$,
 \[
 \|\omega_{x}^k\|\leq (\widehat{c}_1+\alpha^*)\|\overline{v}^k\|\le\!(\widehat{c}_1+\alpha^*)\left[\sqrt{\alpha_{\rm min}^{-1}\mu_{\rm max}}+1\right]\|v^k\|. 
 \]
 Combining the second equality of \eqref{temp-ghk} with Proposition \ref{bound-alpk} (ii) and Proposition \ref{prop-xk} (i) yields $\|\omega_{v}^k\|\le 2\alpha^*\|\overline{v}^k\|\le 2\alpha^*\big[\sqrt{\alpha_{\rm min}^{-1}\mu_{\rm max}}+1\big]\|v^k\|$ for each $k$. By the boundedness of $\{\xi^k\}_{k\in\mathbb{N}}$ and Proposition \ref{prop-xk} (i), there exists a constant $\widehat{c}_2>0$ such that $\|\omega_{Q}^k\|\le\|\mathcal{F}^*\xi^k(\overline{v}^k)^{\top}(\mathcal{E}^{\dag})^*\|\le\widehat{c}_2\|\overline{v}^k\|\le\widehat{c}_2\big[\sqrt{\alpha_{\rm min}^{-1}\mu_{\rm max}}+1\big]\|v^k\|$ for each $k$. Thus, for each $k\ge\overline{k}$, it holds
\[
 {\rm dist}(0,\partial\,\Xi_{\widetilde{c}}(u^k))\le{\rm dist}(0,\partial\,\Xi(w^k))+\widetilde{c}\|v^k\|\leq \|(\omega_{x}^k,\omega_{v}^k,\omega_{Q}^k,\overline{v}^k)\|+\widetilde{c}\|v^k\|\leq(\widehat{\gamma}+\widetilde{c})\|v^k\|
\]
with $\widehat{\gamma}=\big[\sqrt{\alpha_{\rm min}^{-1}\mu_{\rm max}}\!+1\big]\sqrt{(\widehat{c}_1+\alpha^*)^2+4(\alpha^*)^2+\widehat{c}_2^2+1}$. The proof is finished. 
\end{proof}

Although $\{\Xi_{\widetilde{c}}(u^k)\}_{k\in\mathbb{N}}$ lacks decrease, we can prove the full convergence of $\{x^k\}_{k\in\mathbb{N}}$ by combining the KL
inequality on $\Xi_{\widetilde{c}}$ with the decrease of $\{\Theta(x^k)\}_{k\in\mathbb{N}}$ skillfully. 
\begin{theorem}\label{theorem-converge}
 Under Assumption \ref{ass1}, if the function $\Xi_{\widetilde{c}}$ has the KL property on the set $U^*$, then $\sum_{k=1}^{\infty}\|x^{k+1}\!-x^k\|<\infty$, so the sequence $\{x^k\}_{k\in\mathbb{N}}$ converges to some $x^*\in\Omega(x^0)$, where $\Omega(x^0)$ is defined in Theorem \ref{theorem1}.
\end{theorem}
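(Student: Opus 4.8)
The plan is to run the Kurdyka--{\L}ojasiewicz machinery not on $\Xi_{\widetilde{c}}$ itself---whose values need not decrease along $\{u^k\}$---but on the genuinely nonincreasing sequence $r_k:=\Theta(x^k)-\varsigma^*\ge 0$, importing the derivative of the desingularizing function from the KL inequality evaluated at $u^k$. If $v^k=0$ for some $k$ the algorithm terminates and the claim is trivial, so assume $v^k\ne 0$ for all $k$. By Proposition~\ref{prop-xk}(ii), $r_k\downarrow 0$ and $r_k-r_{k+1}\ge\frac{\overline{\gamma}}{2}\|v^k\|^2$; by the definition~\eqref{Xict} of $\Xi_{\widetilde{c}}$ together with Lemma~\ref{lemma-XiTheta}, $\Xi_{\widetilde{c}}(u^k)-\varsigma^*\ge\Theta(x^{k+1})-\varsigma^*=r_{k+1}\ge 0$ for all $k\ge\overline{k}$; and by Proposition~\ref{prop-Xi}(ii)--(iii), $\Xi_{\widetilde{c}}(u^k)\to\varsigma^*$ while $\Xi_{\widetilde{c}}\equiv\varsigma^*$ on the nonempty compact set $U^*$. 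If $r_{k+1}=0$ for some $k\ge\overline{k}$, monotonicity of $\{r_j\}$ forces $r_j=0$, hence $v^j=0$, for all $j\ge k+1$, so $\sum_k\|v^k\|<\infty$ trivially; hence we may assume $r_{k+1}>0$ for all $k\ge\overline{k}$, which in particular gives $\Xi_{\widetilde{c}}(u^k)>\varsigma^*$ there.

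Next I would invoke the uniformized KL property on $U^*$ (cf.\ \cite{Bolte2014}): since $U^*$ is compact, $\Xi_{\widetilde{c}}$ is constant ($=\varsigma^*$) on $U^*$ and has the KL property at each point of $U^*$, there exist $\varepsilon>0$, $\eta>0$ and a single desingularizing $\varphi$ such that $\varphi'(\Xi_{\widetilde{c}}(u)-\varsigma^*)\,{\rm dist}(0,\partial\Xi_{\widetilde{c}}(u))\ge 1$ whenever ${\rm dist}(u,U^*)<\varepsilon$ and $\varsigma^*<\Xi_{\widetilde{c}}(u)<\varsigma^*+\eta$. Because $\{u^k\}$ is bounded with cluster set $U^*$, we have ${\rm dist}(u^k,U^*)\to 0$, and together with $\Xi_{\widetilde{c}}(u^k)\to\varsigma^*$ this guarantees that the KL inequality applies at $u^k$ for all $k\ge k_1$ with $k_1\ge\overline{k}$ large enough. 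Combining it with ${\rm dist}(0,\partial\Xi_{\widetilde{c}}(u^k))\le(\widehat{\gamma}+\widetilde{c})\|v^k\|$ from Proposition~\ref{sdiff-gap} gives $\varphi'(\Xi_{\widetilde{c}}(u^k)-\varsigma^*)(\widehat{\gamma}+\widetilde{c})\|v^k\|\ge 1$; since $\varphi'$ is nonincreasing by concavity of $\varphi$ and $0<r_{k+1}\le\Xi_{\widetilde{c}}(u^k)-\varsigma^*$, this yields $\varphi'(r_{k+1})\ge\big[(\widehat{\gamma}+\widetilde{c})\|v^k\|\big]^{-1}$ for all $k\ge k_1$.

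With this in hand, concavity of $\varphi$ at $r_{k+1}$ against the point $r_{k+2}\le r_{k+1}$ gives
\[
\varphi(r_{k+1})-\varphi(r_{k+2})\ \ge\ \varphi'(r_{k+1})(r_{k+1}-r_{k+2})\ \ge\ \frac{\overline{\gamma}}{2(\widehat{\gamma}+\widetilde{c})}\cdot\frac{\|v^{k+1}\|^2}{\|v^k\|},
\]
i.e.\ $\|v^{k+1}\|^2\le\frac{2(\widehat{\gamma}+\widetilde{c})}{\overline{\gamma}}\|v^k\|\big(\varphi(r_{k+1})-\varphi(r_{k+2})\big)$ for $k\ge k_1$. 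Applying $\sqrt{pq}\le\frac14 p+q$ with $p=\|v^k\|$ then gives $\|v^{k+1}\|\le\frac14\|v^k\|+\frac{2(\widehat{\gamma}+\widetilde{c})}{\overline{\gamma}}\big(\varphi(r_{k+1})-\varphi(r_{k+2})\big)$. Summing over $k=k_1,\dots,N$, the right-hand side telescopes (using $\varphi\ge 0$), and the coefficient $\frac14<1$ multiplying $\sum\|v^k\|$ allows the partial sums $\sum_{k=k_1+1}^{N}\|v^k\|$ to be bounded uniformly in $N$, so $\sum_k\|v^k\|<\infty$. Finally, by the estimate $\|x^{k+1}-x^k\|\le M_1\|v^k\|$ for $k\ge\overline{k}$ (Lemma~\ref{lemma-retract}, as used in Lemma~\ref{lemma-XiTheta}), we conclude $\sum_k\|x^{k+1}-x^k\|<\infty$; hence $\{x^k\}$ is Cauchy and converges to some $x^*$, which lies in $\Omega(x^0)$ since $\Omega(x^0)\ne\emptyset$ by Proposition~\ref{bound-alpk}(i).

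I expect the crux to be the organizational point in the first two paragraphs: because $\Xi_{\widetilde{c}}$ lacks a sufficient-decrease property, the Attouch--Bolte--Svaiter template cannot be applied verbatim, and the resolution is to telescope on the decreasing sequence $r_k=\Theta(x^k)-\varsigma^*$ while transferring the KL derivative from $u^k$ to $r_{k+1}$ via $r_{k+1}\le\Xi_{\widetilde{c}}(u^k)-\varsigma^*$ and the monotonicity of $\varphi'$. The remaining care points are justifying the uniformized KL statement on $U^*$ (compactness of $U^*$, constancy of $\Xi_{\widetilde{c}}$ on it, pointwise KL) and disposing of the degenerate cases $v^k=0$ and $r_{k+1}=0$; both are handled above and lead to finite termination of the sum.
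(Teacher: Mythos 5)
Your proof is correct and follows essentially the same route as the paper: invoke the uniformized KL inequality on $\Xi_{\widetilde{c}}$ around $U^*$, transfer $\varphi'$ from $\Xi_{\widetilde{c}}(u^k)-\varsigma^*$ to the monotone quantity $\Theta(x^{k+1})-\varsigma^*$ via $\Theta(x^{k+1})\le\Xi_{\widetilde{c}}(u^k)$ and the monotonicity of $\varphi'$, combine with the relative-error bound from Proposition~\ref{sdiff-gap} and the sufficient decrease of $\{\Theta(x^k)\}$, and telescope. The only differences are cosmetic (an index shift of one and the AM--GM variant $\sqrt{pq}\le\frac14 p+q$ instead of $2\sqrt{ab}\le a+b$), and both correctly identify the key organizational point that one cannot telescope on $\{\Xi_{\widetilde{c}}(u^k)\}$ itself because it lacks sufficient decrease.
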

\begin{proof}
 If there exists some $k_0\in\mathbb{N}$ such that $\Theta(x^{k_0})=\Theta(x^{k_0+1})$, from Proposition \ref{prop-xk} (ii), we have $v^{k_0}=0$, which by Remark \ref{remark-alg} (c) shows that Algorithm \ref{iRVM} finds a stationary point $x^{k_0}$ of problem \eqref{prob} within a finite number of steps. Otherwise, by Proposition \ref{prop-xk} (ii), it suffices to consider that $\Theta(x^k)>\Theta(x^{k+1})>\varsigma^*$ for all $k\in\mathbb{N}$, where $\varsigma^*$ is the same as in Proposition \ref{prop-xk} (ii). In this case, from Lemma \ref{lemma-XiTheta}, $\Xi_{\widetilde{c}}(u^k)>\varsigma^*$ for all $k\ge\overline{k}$. By Proposition \ref{prop-Xi} (i), the set $U^*$ is nonempty and compact, and $\Xi_{\widetilde{c}}(u)=\varsigma^*$ for all $u\in U^*$. Thus, from the KL property of $\Xi_{\widetilde{c}}$ and \cite[Lemma 6]{Bolte2014}, there exist some $\delta>0$, $\varpi>0$ and $\varphi\in\Upsilon_{\varpi}$ such that for all $u\in[\varsigma^*<\Xi_{\widetilde{c}}<\varsigma^*+\varpi]\cap\{u\in\mathbb{U}\mid {\rm dist}(u,U^*)\leq \delta\}$, $\varphi'(\Xi_{\widetilde{c}}(u)-\varsigma^*){\rm dist}(0,\partial\, \Xi_{\widetilde{c}}(u))\geq 1$. Recall that $\lim_{k\to\infty}\Xi_{\widetilde{c}}(u^k)=\varsigma^*$	and $\lim_{k\to\infty}{\rm dist}(u^k,U^*)=0$. There exists $\widehat{k}\ge\overline{k}\!+\!1$ such that $u^{k-1}\in[\varsigma^*<\Xi_{\widetilde{c}}<\varsigma^*+\varpi]\cap\{u\in\mathbb{U}\mid {\rm dist}(u,U^*)\leq \delta\}$ for all $k\ge\widehat{k}$. Then, for all $k\ge\widehat{k}$,  
 \begin{equation*}
 \varphi'(\Xi_{\widetilde{c}}(u^{k-1})-\varsigma^*){\rm dist}(0,\partial\, \Xi_{\widetilde{c}}(u^{k-1}))\geq 1.
\end{equation*}
 Together with Lemma \ref{lemma-XiTheta}, the nonincreasing of $\varphi'$ on $(0,\infty)$, and Proposition \ref{sdiff-gap}, we obtain 
 \begin{equation}\label{temp43-converge}
 \varphi'(\Theta(x^{k})-\varsigma^*)\ge\varphi'(\Xi_{\widetilde{c}}(u^{k-1})-\varsigma^*)\ge\frac{1}{(\widehat{\gamma}+\widetilde{c})\|v^{k-1}\|}\quad\forall k\ge\widehat{k}. 
 \end{equation}
 From the concavity of $\varphi$ and Proposition \ref{prop-xk} (ii), it then follows that for each $k\ge\widehat{k}$, 
 \begin{equation*}
 \Delta_{k,k+1}:=\varphi(\Theta(x^{k})-\varsigma^*)-\varphi(\Theta(x^{k+1})-\varsigma^*)\ge \frac{\Theta(x^{k})-\Theta(x^{k+1})}{(\widehat{\gamma}+\widetilde{c})\|v^{k-1}\|}\ge\frac{\gamma\|v^k\|^2}{2(\widehat{\gamma}+\widetilde{c})\|v^{k-1}\|},
 \end{equation*}
 which implies that $\|v^k\|\le\sqrt{2(\widehat{\gamma}+\widetilde{c})\gamma^{-1}\|v^{k-1}\|\Delta_{k,k+1}}$. Along with $2\sqrt{ab}\le a+b$ for $a,b\ge 0$, we obtain $2\|v^k\|\le 2(\widehat{\gamma}+\widetilde{c})\gamma^{-1}\Delta_{k,k+1}+\|v^{k-1}\|$ for all $k\ge\widehat{k}$. Summing this inequality from any $k\ge\widehat{k}$ to any $l\ge k$ yields that  
 \(
	2\textstyle{\sum_{i=k}^{l}}\|v^i\|\le \sum_{i=k}^{l}\|v^{i-1}\|+2(\widehat{\gamma}+\widetilde{c})\gamma^{-1}\sum_{i=k}^{l}\Delta_{i,i+1}.
 \)
 From the definition of $\Delta_{i,i+1}$ and the nonnegativity of $\varphi$, it follows 
 \begin{equation}\label{vk-ineq51}
 \textstyle{\sum_{i=k}^{l}}\|v^i\|\le \|v^{k-1}\|+2(\widehat{\gamma}+\widetilde{c})\gamma^{-1}\varphi(\Theta(x^k)-\varsigma^*).
 \end{equation}
 Recall that $x^{k+1}=R_{x^k}(v^k)$ for each $k$ and $\lim_{k\to\infty}v^k=0$. Using Lemma \ref{lemma-retract} with $\varLambda=\varLambda_0$ in Assumption \ref{ass1}, there exist $\widetilde{k}\ge\widehat{k}$ and $M_1',M_2'>0$ such that for all $k\ge\widetilde{k}$,
 \begin{equation}\label{vk-ineq52}
 \|x^{k+1}-x^k\|\le M_1'\|v^k\|,\,\|x^{k}-x^{k-1}-v^{k-1}\|\le M_2'\|v^{k-1}\|^2\ {\rm and}\ M_2'\|v^{k-1}\|\le \frac{1}{2}.
 \end{equation}
  By the latter two inequalities in \eqref{vk-ineq52}, $\|v^{k-1}\|\le \|x^{k}-x^{k-1}\|+M_2'\|v^{k-1}\|^2\le \|x^{k}-x^{k-1}\|+\frac{1}{2}\|v^{k-1}\|$, which implies that $\|v^{k-1}\|\le 2\|x^k-x^{k-1}\|$ for all $k\ge\widetilde{k}$.
 Together with the first inequality in \eqref{vk-ineq52} and the above \eqref{vk-ineq51}, for any $k\ge\widetilde{k}$ and $l\ge k$, 
 \begin{align}\label{temp45-rate}
 \sum_{i=k}^{l}\|x^{i+1}-x^i\|\le \sum_{i=k}^{l}M_1'\|v^i\|\le 2M_1'\|x^k-x^{k-1}\|+\frac{2M_1'(\widehat{\gamma}+\widetilde{c})}{\gamma}\varphi(\Theta(x^{k})-\varsigma^*).
\end{align}
Passing to the limit $l\to\infty$ in this inequality leads to $\sum_{k=1}^{\infty}\|x^{k+1}\!-x^k\|<\infty$. 
\end{proof}

From the expression of $\Xi_{\widetilde{c}}$ and Section \ref{sec2.4}, $\Xi_{\widetilde{c}}$ is a KL function if $\Xi$ is definable in an o-minimal structure over the real field, which is easily identified when the expression of $\mathcal{M}$ is known. For example, for the $\mathcal{M}$ in Examples \ref{Example1}-\ref{Example4}, $T\mathcal{M}$ is a semialgebraic set, so $\delta_{T\mathcal{M}}$ is a semialgebraic function, which means that $\Xi$ is definable in an o-minimal structure over the real field if $f,\vartheta$ and $F$ are all definable in this o-minimal structure.  
\subsection{Local convergence rate}\label{sec5.2}

When $\Xi$ has the KL property of exponent $q\in[\frac{1}{2},1)$ on $W^*$, we can prove that $\{x^k\}_{k\in\mathbb{N}}$ converges to $x^*$ in a linear rate for $q=\frac{1}{2}$ and a sublinear rate for $q\in(\frac{1}{2},1)$.
\begin{theorem}\label{coverge-rate}
 Under Assumption \ref{ass1}, if $\Xi$ has the KL property of exponent $q\in[\frac{1}{2},1)$ on the set $W^*$, then $\{x^k\}_{k\in\mathbb{N}}$ converges to some $x^*\in\Omega(x^0)$, where $\Omega(x^0)$ is defined in Theorem \ref{theorem1}, and furthermore,
 \begin{description}
 \item[{\bf(i)}] when $q=1/2$, there exist $\varrho,\widehat{\varrho}\in(0,1)$ and $\gamma_1>0$ such that for sufficiently large $k$, 
 \[
  \Theta(x^k)-\varsigma^*\le\varrho(\Theta(x^{k-1})-\varsigma^*)\ \ {\rm and}\ \ \|x^k-x^*\|\le\gamma_1\widehat{\varrho}^k;
\]
		
\item[{\bf(ii)}] when $q\in(1/2,1)$, there exist $\gamma_1>0$ and $\gamma_2>0$ such that for sufficiently large $k$,
\[
 \Theta(x^k)-\varsigma^*\le\gamma_1 k^{\frac{1}{1-2q}}\ \ {\rm and}\ \ \|x^k-x^*\|\le\gamma_2k^{\frac{1-q}{1-2q}}.
\]
\end{description}	
\end{theorem}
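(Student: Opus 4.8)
The whole statement will follow once I control the scalar sequence $r_k:=\Theta(x^k)-\varsigma^*$ through a recursion of the form $r_k^{2q}\le C\,(r_{k-1}-r_k)$ for all large $k$, and then solve that recursion. The preliminary (and, I expect, most delicate) step is to upgrade the hypothesis on $\Xi$ to a statement about the potential $\Xi_{\widetilde c}$. By Proposition \ref{prop-Xi}(i) every $u^*\in U^*$ has the form $(w^*,0)$ with $w^*\in W^*$, and Proposition \ref{sdiff-gap} together with $v^k\to0$, $\Xi(w^k)\to\varsigma^*$ and the outer semicontinuity of $\partial\Xi$ shows $0\in\partial\Xi(w^*)$, so $W^*\subset{\rm dom}\,\partial\Xi$ and the hypothesis ``$\Xi$ has the KL property of exponent $q$ on $W^*$'' is meaningful and forces each $w^*$ to be a critical point of $\Xi$. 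Since $\Xi_{\widetilde c}(w,v)=\Xi(w)+\tfrac{\widetilde c}{2}\|v\|^2$ is a separable sum and $v\mapsto\tfrac{\widetilde c}{2}\|v\|^2$ has KL exponent $1/2$ at its minimizer $v=0$, the calculus of KL exponents for separable sums (Li and Pong \cite{LiPong2018}) gives that $\Xi_{\widetilde c}$ has the KL property of exponent $\max\{q,1/2\}=q$ at every point of $U^*$. In particular $\Xi_{\widetilde c}$ has the KL property on $U^*$, so Theorem \ref{theorem-converge} already yields $\sum_{k}\|x^{k+1}-x^k\|<\infty$ and $x^k\to x^*\in\Omega(x^0)$, which is the first assertion.

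\textbf{The scalar recursion.} I would then rerun the estimates in the proof of Theorem \ref{theorem-converge}, this time with the explicit desingularizing function $\varphi(t)=c\,t^{1-q}$ (so $\varphi'(t)=c(1-q)t^{-q}$) furnished by the uniformized KL-exponent inequality (Bolte et al.\ \cite[Lemma~6]{Bolte2014}) on a neighborhood of the compact set $U^*$. As in that proof, if $\Theta(x^{k_0})=\Theta(x^{k_0+1})$ for some $k_0$, then $v^{k_0}=0$, Algorithm \ref{iRVM} terminates finitely and $x^k=x^*$ for all $k\ge k_0$, so the rate bounds hold trivially; hence I may assume $r_k>0$ and strictly decreasing. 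For all sufficiently large $k$, chaining (i) Lemma \ref{lemma-XiTheta}, which gives $r_k\le\Xi_{\widetilde c}(u^{k-1})-\varsigma^*$; (ii) Proposition \ref{sdiff-gap}, ${\rm dist}(0,\partial\,\Xi_{\widetilde c}(u^{k-1}))\le(\widehat\gamma+\widetilde c)\|v^{k-1}\|$; (iii) the KL-exponent inequality applied at $u^{k-1}$ (legitimate because ${\rm dist}(u^{k-1},U^*)\to0$ and $\Xi_{\widetilde c}(u^{k-1})\downarrow\varsigma^*$, as in the proof of Theorem \ref{theorem-converge}); and (iv) Proposition \ref{prop-xk}(ii), $\|v^{k-1}\|^2\le\tfrac{2}{\overline\gamma}(r_{k-1}-r_k)$, produces $r_k^{2q}\le C\,(r_{k-1}-r_k)$ for a constant $C>0$ depending only on $c,q,\widehat\gamma,\widetilde c,\overline\gamma$.

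\textbf{Solving the recursion and the iterate rate.} When $q=1/2$ the recursion reads $r_k\le C(r_{k-1}-r_k)$, hence $r_k\le\varrho\,r_{k-1}$ with $\varrho:=C/(1+C)\in(0,1)$, giving $r_k\le\varrho^{\,k-\widehat k}r_{\widehat k}$; when $q\in(1/2,1)$, a standard argument for such scalar recursions (see, e.g., Attouch et al.\ \cite{Attouch2010}) yields $r_k=O(k^{1/(1-2q)})$. For the iterate rate I would feed these decays into the summability estimate \eqref{temp45-rate} from the proof of Theorem \ref{theorem-converge}, $\|x^k-x^*\|\le\sum_{i\ge k}\|x^{i+1}-x^i\|\le 2M_1'\|x^k-x^{k-1}\|+2M_1'(\widehat\gamma+\widetilde c)\overline\gamma^{-1}\varphi(r_k)$, combined with $\|x^k-x^{k-1}\|\le M_1'\|v^{k-1}\|\le M_1'\sqrt{2/\overline\gamma}\,\sqrt{r_{k-1}}$ and $\varphi(r_k)=c\,r_k^{1-q}$. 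For $q=1/2$ both terms are $O(\varrho^{k/2})$, so $\|x^k-x^*\|\le\gamma_1\widehat\varrho^{\,k}$ with $\widehat\varrho:=\sqrt\varrho\in(0,1)$; for $q\in(1/2,1)$ the term $r_k^{1-q}=O(k^{(1-q)/(1-2q)})$ decays more slowly than $\sqrt{r_{k-1}}=O(k^{1/(2(1-2q))})$ (since $1-q<1/2$ and $1-2q<0$), whence $\|x^k-x^*\|\le\gamma_2\,k^{(1-q)/(1-2q)}$.

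\textbf{Main obstacle.} The only genuinely non-mechanical step is the first one: establishing that the KL property of exponent $q$ of $\Xi$ on $W^*$ transfers to the auxiliary potential $\Xi_{\widetilde c}$ on $U^*$. This requires identifying $U^*$ precisely as $\{(w^*,0):w^*\in W^*,\ 0\in\partial\Xi(w^*)\}$, invoking the separable-sum calculus of KL exponents for $\Xi(w)+\tfrac{\widetilde c}{2}\|v\|^2$, and then passing from the resulting pointwise property to a uniformized inequality valid on a whole neighborhood of the compact set $U^*$. Once that is in place, the derivation of the recursion and the two rate regimes run in close parallel to the proof of Theorem \ref{theorem-converge}.
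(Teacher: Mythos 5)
Your argument is correct and follows the paper's strategy step by step: you transfer the exponent-$q$ KL property from $\Xi$ on $W^*$ to $\Xi_{\widetilde c}$ on $U^*$ via Li and Pong's calculus of KL exponents for sums (the paper cites \cite[Theorem 3.3]{LiPong2018} for exactly this), obtain full convergence from Theorem~\ref{theorem-converge}, and chain Lemma~\ref{lemma-XiTheta}, Proposition~\ref{sdiff-gap} and Proposition~\ref{prop-xk}(ii) through the uniformized KL inequality with $\varphi(t)=c\,t^{1-q}$ to reach $\omega_k^{2q}\le c_1(\omega_{k-1}-\omega_k)$, which is the paper's inequality \eqref{ineq46-rate}. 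The only (small) divergence is at the very end: the paper also derives a companion recursion $\Delta_k\le c_2(\Delta_{k-1}-\Delta_k)^{(1-q)/q}$ for the tail sums $\Delta_k=\sum_{j\ge k}\|x^{j+1}-x^j\|$ and then invokes the arguments of Attouch--Bolte \cite[Theorem~2]{Attouch2009} to solve both recursions simultaneously, whereas you first solve the scalar recursion for $\omega_k$ alone and then substitute its decay directly into the bound $\Delta_k\le 2M_1'\|x^k-x^{k-1}\|+2M_1'(\widehat\gamma+\widetilde c)\,\overline\gamma^{-1}\varphi(\omega_k)$ obtained from \eqref{temp45-rate}, together with $\|x^k-x^{k-1}\|\le M_1'\sqrt{2/\overline\gamma}\,\sqrt{\omega_{k-1}}$. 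Your exponent bookkeeping (that $(1-q)/(1-2q)$ exceeds $1/(2(1-2q))$, so the $\varphi(\omega_k)$ term dominates, and that for $q=1/2$ both terms decay geometrically with ratio $\sqrt\varrho$) is correct and yields the same rates; this direct-substitution finish is marginally more self-contained than the paper's two-recursion appeal to Attouch--Bolte, but the two are essentially equivalent.
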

\begin{proof}
 Note that $\Xi$ is continuous relative to its domain and the function $\mathbb{X}\ni v\mapsto\frac{\widetilde{c}}{2}\|v\|^2$ has the KL property of exponent $\frac{1}{2}$. From \cite[Theorem 3.3]{LiPong2018}, $\Xi_{\widetilde{c}}$ has the KL property of exponent $q\in[\frac{1}{2},1)$ on $U^*$. The convergence of $\{x^k\}_{k\in\mathbb{N}}$ follows Theorem \ref{theorem-converge}. To achieve items (i)-(ii), by the proof of Theorem \ref{theorem-converge}, it suffices to consider the case that $\Theta(x^k)>\Theta(x^{k+1})>\varsigma^*$ for each $k\in\mathbb{N}$, and now inequality \eqref{temp43-converge} holds with $\mathbb{R}_{+}\ni t\mapsto\varphi(t)=ct^{1-q}$ for some $c>0$, i.e.,
 \begin{equation}\label{temp44-rate}
 (\Theta(x^{k})-\varsigma^*)^{q}\le(\Xi_{\widetilde{c}}(u^{k-1})-\varsigma^*)^{q}\le c(1-q)(\widehat{\gamma}+\widetilde{c})\|v^{k-1}\|\quad\forall k\ge\widehat{k}. 
 \end{equation} 
 Together with Proposition \ref{prop-xk} (ii), for each $k\ge\widehat{k}$, by letting  $\omega_k:=\Theta(x^k)-\varsigma^*$, it holds  
 \begin{equation}\label{ineq46-rate}
 \omega_k^{2q}\le c_1\big(\omega_{k-1}-\omega_k\big)\ \ {\rm with}\ \ c_1:=2\overline{\gamma}^{-1}[c(1-q)(\widehat{\gamma}+\widetilde{c})]^2.
 \end{equation} 
 In addition, passing to the limit $l\to\infty$ in \eqref{temp45-rate} and using $\varphi(t)=ct^{1-q}$ for $t\ge 0$ leads to 
 \[
	\Delta_{k}:=\sum_{j=k}^{\infty}\|x^{j+1}-x^j\|\le 2M_1'\|x^{k}-x^{k-1}\|+2cM_1'(\widehat{\gamma}+\widetilde{c})\overline{\gamma}^{-1}(\Theta(x^{k})-\varsigma^*)^{1-q}\quad\forall k\ge\widetilde{k}.
 \]
 Note that $\lim_{k\to\infty}\|x^{k+1}-x^{k}\|=0$ and $0<\frac{1-q}{q}\le 1$. If necessary by increasing $\widetilde{k}$, we have $\|x^{k}-x^{k-1}\|\le\|x^{k}-x^{k-1}\|^{\frac{1-q}{q}}$ for each $k\ge\widetilde{k}$. Then, for all $k\ge\widetilde{k}$, 
 \begin{align}\label{ineq47-rate}  
 \Delta_{k}&\le 2M_1'\|x^{k}-x^{k-1}\|^{\frac{1-q}{q}}+2cM_1'(\widehat{\gamma}+\widetilde{c})\overline{\gamma}^{-1}(\Theta(x^{k})-\varsigma^*)^{1-q}\nonumber\\
 &\le2M_1'\|x^{k}-x^{k-1}\|^{\frac{1-q}{q}}+2M_1'\overline{\gamma}^{-1}[c(\widehat{\gamma}+\widetilde{c})]^{1+\frac{1}{q}}(1-q)^{\frac{1}{q}}\|v^{k-1}\|^{\frac{1-q}{q}}\nonumber\\
 &\le 2M_1'\|x^{k}-x^{k-1}\|^{\frac{1-q}{q}}+4M_1'\overline{\gamma}^{-1}[c(\widehat{\gamma}+\widetilde{c})]^{1+\frac{1}{q}}(1-q)^{\frac{1}{q}}\|x^k-x^{k-1}\|^{\frac{1-q}{q}}\nonumber\\
 &=c_2(\Delta_{k-1}-\Delta_k)^{\frac{1-q}{q}}\ \ {\rm with}\ 
 c_2:=2M_1'+4M_1\overline{\gamma}^{-1}[c(\widehat{\gamma}+\widetilde{c})]^{1+\frac{1}{q}}(1-q)^{\frac{1}{q}}.
 \end{align}
 where the second inequality follows from \eqref{temp44-rate} and the third inequality is due to $\|v^{k-1}\|\le 2\|x^k-x^{k-1}\|$ for all $k\ge\widetilde{k}$ by the proof of Theorem \ref{theorem-converge} and $\frac{1-q}{q}\le 1$. With \eqref{ineq46-rate} and \eqref{ineq47-rate}, it is easy to obtain the conclusion of (i)-(ii) by following the same arguments as those for \cite[Theorem 2]{Attouch2009}, so we here omit the details. 
\end{proof}

The KL property of $\Xi$ with an exponent $q\in[\frac{1}{2},1)$ on $W^*$ is the key to achieve the local convergence rate of $\{x^k\}_{k\in\mathbb{N}}$. Unlike the KL property, the KL property with an explicit exponent $q$ is rare for manifold optimization except for special objective functions or manifolds (see Liu et al. \cite{LiuSo2019}). Recall that $T\mathcal{M}$ is an embedded closed submanifold, and $\Xi$ is a composite function over this manifold. It is important to provide a reasonable condition for $\Xi$ to satisfy the KL property of exponent $q$ on $W^*$. From the remark after Definition \ref{def-KL}, it suffices to provide a condition for $\Xi$ to satisfy the KL property of exponent $q$ on the set $W^*\cap(\partial\Xi)^{-1}(0)=W^*$, where the equality is due to Proposition \ref{prop-Xi} (ii). Observe that $\Xi=\phi\circ H$ with the function $\phi$ defined by 
\begin{equation}\label{phi-fun}
 \phi(w,z)\!:=f(x)+\langle s,v\rangle+\vartheta(z)+\delta_{T\mathcal{M}}(x,v)+\frac{\alpha^*}{2}\|v\|^2\quad{\rm for}\ (w,z)\in\mathbb{W}\times\mathbb{Z},
\end{equation}
and $H(w):=(w;P(x,v,Q))$ for $w=(x,v,Q,s)\in\mathbb{W}$, where $P$ is the same as the one in the proof of Proposition \ref{prop-Xi} (ii). Since $\phi$ is almost separable, checking its KL property of exponent $q$ is much easier than doing that of $\Xi$. Inspired by this, we  use the KL property of $\phi$ with an exponent $q$ to derive a condition for checking that of $\Xi$. 
\begin{proposition}\label{KL-prop}
 Consider any $w^*=(x^*,v^*,Q^*,s^*)\in W^*$ and $q\in[0,1)$. Suppose that $\phi$ in \eqref{phi-fun} satisfies the KL property of exponent $q$ at $(w^*,F(x^*))$, and that the  implication 
 \begin{equation}\label{KL-imply}
  \Delta_{x}+\nabla F(x^*)\Delta_{z}=0\
 \Longrightarrow\ \|(\Delta_{x};\Delta_{z})\|=0
 \end{equation}
 holds for any $(\Delta_{x},\Delta_{z})\in\big\{(\zeta,\xi)\in(N_{x^*}\mathcal{M}+\tau\nabla\!f(x^*))\times\tau\partial \vartheta(F(x^*))\ |\ \tau>0\big\}$. Then, the function $\Xi$ satisfies the KL property of exponent $q$ at $w^*$.
 \end{proposition}
\begin{proof} 
 By Proposition \ref{prop-Xi} (i), $x^*\in\mathcal{M}, v^*=0$ and $s^*=\nabla\!f(x^*)$. Suppose that $\Xi$ does not have the KL property of exponent $q$ at $w^*$. By Definition \ref{def-KL}, there exists a sequence $\{\widetilde{w}^k\}_{k\in\mathbb{N}}\subset\mathbb{W}$ converging to $w^*$ with $\widetilde{w}^k=(\widetilde{x}^k,\widetilde{v}^k,\widetilde{Q}^k,\widetilde{s}^k)$ and $\Xi(w^*)<\Xi(\widetilde{w}^k)<\Xi(w^*)+\frac{1}{k}$ such that 
 \begin{equation}\label{partialXi-ineq}
 {\rm dist}(0,\partial \Xi(\widetilde{w}^k))< \dfrac{1}{k}(\Xi(\widetilde{w}^k)-\Xi(w^*))^{q}=\frac{1}{k}\big[\phi(\widetilde{w}^k,\widetilde{z}^k)-\phi(w^*,z^*)\big]^{q}\quad\forall k\in\mathbb{N},
 \end{equation}
 where $\widetilde{z}^k\!:=P(\widetilde{x}^k,\widetilde{v}^k,\widetilde{Q}^k)$ and $z^*\!:=P(x^*,v^*,Q^*)=F(x^*)+\mathcal{F}Q^*\mathcal{E}^{\dag}v^*\!=F(x^*)$. 
 Obviously, for each $k\in\mathbb{N}$, $(\widetilde{x}^k,\widetilde{v}^k,\widetilde{Q}^k,\widetilde{s}^k)\in\mathcal{M}\times T_{\widetilde{x}^k}\mathcal{M}\times\mathbb{R}^{r\times p}\times\mathbb{X}$. Invoking the limit $\lim_{k\to\infty}\widetilde{x}^k=x^*$ and  \eqref{tb-eq1}, there exist $\overline{k}_1\in\mathbb{N}$ and $\zeta_1^k,\zeta_2^k\in\mathbb{Y}$ such that for each $k\ge\overline{k}_1$, 
\begin{equation}\label{tb-normalk}
\begin{pmatrix}
   \nabla G_{x^*}(\widetilde{x}^k)\zeta_1^k+\!(D^2G_{x^*}(\widetilde{x}^k)\widetilde{v}^k)^*\zeta^k_2\\
   \nabla G_{x^*}(\widetilde{x}^k)\zeta^k_2
   \end{pmatrix}\in N_{(\widetilde{x}^k,\widetilde{v}^k)}T\mathcal{M}.
\end{equation}
Together with \eqref{subdiff-Xi}-\eqref{subdiff-H} and \eqref{partialXi-ineq}, for each $k\ge\overline{k}_1$ 
there exists $\xi^k\in\partial\vartheta(\widetilde{z}^k)$ such that
 \begin{align}\label{temp42-KL}
  &\left\|\begin{pmatrix}
 \nabla\!f(\widetilde{x}^k)+\nabla F(\widetilde{x}^k)\xi^k+\nabla G_{x^*}(\widetilde{x}^k)\zeta_1^k+(D^2G_{x^*}(\widetilde{x}^k)\widetilde{v}^k)^*\zeta^k_2\\
	\widetilde{s}^k+\alpha^*\widetilde{v}^k+(\mathcal{E}^{\dag})^*(\widetilde{Q}^k)^{\top}\mathcal{F}^*\xi^k+\nabla G_{x^*}(\widetilde{x}^k)\zeta^k_2\\
   \mathcal{F}^*\xi^k(\widetilde{v}^k)^{\top}(\mathcal{E}^{\dag})^*\\ 
	\widetilde{v}^k
  \end{pmatrix}\right\|\nonumber\\
  &<\frac{1}{k}\big[\phi(\widetilde{w}^k,\widetilde{z}^k)-\phi(w^*,z^*)\big]^{q}.
 \end{align}
 Notice that $\nabla F(x^k)\xi^k=(\mathcal{E}^{\dag})^*(\widetilde{Q}^k)^{\top}\mathcal{F}^*\xi^k$ for each $k\ge\overline{k}_1$. The inequality \eqref{temp42-KL} implies the boundedness of $\{(\widetilde{s}^k+\alpha^*\widetilde{v}^k+\nabla F(\widetilde{x}^k)\xi^k+\nabla G_{x^*}(\widetilde{x}^k)\zeta^k_2)\}_{k\ge\overline{k}_1}$, while the local boundedness of $\partial\vartheta$ and \cite[Proposition 5.15]{RW98} imply that the sequence $\{\xi^k\}_{k\ge\overline{k}_1}$ is bounded. Then, the surjectivity of the mapping $G'_{x^*}(x^*)\!:\mathbb{X}\to\mathbb{Y}$ implies the boundedness of $\{\zeta_2^k\}_{k\ge\overline{k}_1}$, which along with the above \eqref{temp42-KL} implies that of the sequence $\{\zeta_1^k\}_{k\ge\overline{k}_1}$. For each $k\ge\overline{k}_1$, write 
 \begin{equation*}
 \Delta_1^k:=\nabla G_{x^*}(\widetilde{x}^k)\zeta_1^k+\!(D^2G_{x^*}(\widetilde{x}^k)\widetilde{v}^k)^*\zeta^k_2+\!\nabla\!f(\widetilde{x}^k)\ {\rm and}\ 
 \Delta_2^k:= \nabla G_{x^*}(\widetilde{x}^k)\zeta^k_2+\widetilde{s}^k\!+\!\alpha^*\widetilde{v}^k.
 \end{equation*}
 By the definition of $\phi$, at any $(w,z)$ with $w= (x,v,Q,s)\in\mathcal{M}\times T_{x}\mathcal{M}\times\mathbb{R}^{r\times p}\times\mathbb{X}$ and $z\in\mathbb{Z}$,
 \begin{equation*}
 \partial\phi(w,z)=\left[N_{(x,v)}T\mathcal{M}\!+\!\begin{pmatrix}
 			\nabla\!f(x)\\
			s+\alpha^*v 
 \end{pmatrix}\right]\times\partial\vartheta(z)\times\big\{v\big\}.
 \end{equation*}
 By virtue of \eqref{tb-normalk}, we have 
 $(\Delta_1^k;\Delta _2^k;\xi^k;\widetilde{v}^k)\in\partial\phi(\widetilde{w}^k,\widetilde{z}^k)$ for all $k\ge\overline{k}_1$. Together with the KL property of $\phi$ with exponent $q$ at $(w^*,z^*)$, there exists $\overline{c}_1>0$ such that for all $k\ge\overline{k}_1$,
 \begin{equation*}
 \overline{c}_1(\phi(\widetilde{w}^k,\widetilde{z}^k)-\phi(w^*,z^*))^{q}\leq {\rm dist}(0,\partial\phi(\widetilde{w}^k,\widetilde{z}^k))\le\|(\Delta_1^k;\Delta_2^k;\xi^k;\widetilde{v}^k)\|.
 \end{equation*}
 For each $k\ge\overline{k}_1$, let $t_k:=\|(\Delta_1^k;\Delta_2^k;\xi^k;\widetilde{v}^k)\|$ and $\widehat{\Delta}_1^k:=\frac{\Delta_1^k}{t_k},\widehat{\Delta}_2^k:=\frac{\Delta_2^k}{t_k}, \widehat{\xi}^k:=\frac{\xi^k}{t_k},\widehat{v}^k:=\frac{\widetilde{v}^k}{t_k}$.
 Using these notation and combining the last inequality with the above \eqref{temp42-KL}, for each $k\ge\overline{k}_1$, 
 \begin{equation}\label{temp-ineq46}
  \left\|\big(
 \widehat{\Delta}_1^k+\nabla F(\widetilde{x}^k)\widehat{\xi}^k;
 \widehat{\Delta}_2^k+\!\nabla F(\widetilde{x}^k)\widehat{\xi}^k;\mathcal{F}^*\xi^k(\widehat{v}^k)^{\top}(\mathcal{E}^{\dag})^*;
 \widehat{v}^k\big)\right\|<\frac{1}{\overline{c}_1k}.  \end{equation}
If necessary by taking a subsequence, we assume  $\lim_{k\to\infty}(\widehat{\Delta}_1^k;\widehat{\Delta}_2^k;\widehat{\xi}^k;\widehat{v}^k)=(\widehat{\Delta}_1;\widehat{\Delta}_2;\widehat{\xi}^*;\widehat{v}^*)$ with $\|(\widehat{\Delta}_1;\widehat{\Delta}_2;\widehat{\xi}^*;\widehat{v}^*)\|=1$. Along with the last inequality and $\lim_{k\to\infty}\widetilde{x}^k=x^*$,  
\begin{equation}\label{Delta-ineq}
\widehat{\Delta}_1+\nabla F(x^*)\widehat{\xi}^*=0,\, \widehat{\Delta}_2+\nabla F(x^*)\widehat{\xi}^*=0,\,  \widehat{v}^*=0\ \ {\rm and}\ \ \|(\widehat{\Delta}_1;\widehat{\Delta}_2;\widehat{\xi}^*)\|=1.
\end{equation}
In addition, from the definitions of $\widehat{\Delta}_1^k$ and $\widehat{\Delta}_2^k$, for each $k\ge\overline{k}_1$, by \eqref{tb-normalk}, it holds
\begin{align*}
 \begin{pmatrix}
  \widehat{\Delta}_1^k\\
  \widehat{\Delta}_2^k-\alpha^*\widehat{v}^k
 \end{pmatrix}&=t_k^{-1}\left[\begin{pmatrix}
 \nabla G_{x^*}(\widetilde{x}^k)\zeta_1^k+\!(D^2G_{x^*}(\widetilde{x}^k)\widetilde{v}^k)^*\zeta^k_2\\
  \nabla G_{x^*}(\widetilde{x}^k)\zeta^k_2
 \end{pmatrix}+\begin{pmatrix}
 \nabla\!f(\widetilde{x}^k)\\
 \widetilde{s}^k
 \end{pmatrix}\right]\nonumber\\
 &\in N_{(\widetilde{x}^k,\widetilde{v}^k)}T\mathcal{M}+t_k^{-1}\big(\nabla\!f(\widetilde{x}^k);\widetilde{s}^k\big).
\end{align*}
Note that the sequence $\{t_k\}_{k\ge\overline{k}}$ is bounded. We proceed the proof by two cases as below.
 
\noindent
{\bf Case 1: $\liminf_{k\to\infty}t_k>0$.} Note that $\{t_k\}_{k\ge\overline{k}}$ is bounded. If necessary by taking a subsequence, we can assume that $\lim_{k\to\infty}t_k=t_*>0$. Note that $\widehat{\xi}^k\in t_k^{-1}(\partial\vartheta(\widetilde{z}^k))$ for all $k\ge\overline{k}_1$ and $\lim_{k\to\infty}\widehat{\xi}^k=\widehat{\xi}^*$. We have $\widehat{\xi}^*\in t_*^{-1}\partial \vartheta(z^*)=t_*^{-1}\partial\vartheta(F(x^*))$. Passing to the limit $k\to\infty$ in the last inclusion and using the outer semicontinuity of the mapping $N_{(\cdot,\cdot)}T\mathcal{M}$ leads to 
\begin{align*}
 (\widehat{\Delta}_1,\widehat{\Delta}_2)&\in N_{(x^*,0)}T\mathcal{M}+t_*^{-1}(\nabla\!f(x^*),\nabla\!f(x^*))\\
 &=(N_{x^*}\mathcal{M}+t_*^{-1}\nabla\!f(x^*))\times (N_{x^*}\mathcal{M}+t_*^{-1}\nabla\!f(x^*)),
\end{align*}
where the equality is due to Lemma \ref{Normal-TM}. Along with the first two equalities in \eqref{Delta-ineq} and the implication \eqref{KL-imply}, we get $\|(\widehat{\Delta}_1;\widehat{\Delta}_2;\widehat{\xi}^*)\|=0$, a contradiction to $\|(\widehat{\Delta}_1;\widehat{\Delta}_2;\widehat{\xi}^*)\|=1$. 

\noindent
{\bf Case 2: $\liminf_{k\to\infty}t_k=0$.} In this case, since $\widehat{\xi}^k=t_k^{-1}\partial\vartheta(\widetilde{z}^k)$ and $\lim_{k\to\infty}\widehat{\xi}^k=\widehat{\xi}^*$, we have $\widehat{\xi}^*\in\limsup_{z\to F(x^*),\tau\downarrow 0}\tau^{-1}\partial \vartheta(z)$. Recall that the mapping $\partial\vartheta:\mathbb{Z}\rightrightarrows\mathbb{Z}$ is locally bounded. We infer that the outer limit set $\limsup_{z\to F(x^*),\tau\downarrow 0}\tau^{-1}\partial \vartheta(z)$ is empty or $0$. Thus, it holds $\widehat{\xi}^*=0$. Together with the first two equalities in \eqref{Delta-ineq}, we obtain $\|(\widehat{\Delta}_1;\widehat{\Delta}_2;\widehat{\xi}^*)\|=0$, which is a contradiction to $\|(\widehat{\Delta}_1;\widehat{\Delta}_2;\widehat{\xi}^*)\|=1$. Thus, we finish the proof. 
\end{proof}
\begin{remark}\label{remark-KLcond}
We notice that \cite[Theorem 3.2]{LiPong2018} identifies the KL property of exponent $q\in[0,1)$ for the composite  function $\Xi=\phi\circ H$ at $w^*\in W^*$ in terms of the KL property of $\phi$ with exponent $q$ at $(w^*,F(x^*))$ and the surjectivity of the mapping $H'(w^*)\!:\mathbb{W}\to\mathbb{U}$. By using $v^*=0$ and $\nabla F(x^*)=(\mathcal{E}^{\dag})^*(Q^*)^{\top}\mathcal{F}^*$ for any $w^*\in W^*$, it is not hard to verify that the latter is equivalent to requiring the implication \eqref{KL-imply} to hold for all $(\Delta_{x},\Delta_{z})\in\mathbb{X}\times\mathbb{Z}$. Obviously, the set $\big\{(\zeta,\xi)\in(N_{x^*}\mathcal{M}+\tau\nabla\!f(x^*))\times\tau\partial \vartheta(F(x^*))\ |\ \tau>0\}$ is much smaller the whole space $\mathbb{X}\times\mathbb{Z}$. Then, Proposition \ref{KL-prop} provides a weaker condition than \cite[Theorem 3.2]{LiPong2018} for identifying the KL property of $\Xi$ with exponent $q$, as will be illustrated by the following example. 

 Consider problem \eqref{prob} with $F(x)=(x_1\|x\|^2_2+x_2,2x_2\|x\|^2_2,\ldots, nx_n\|x\|^2_2)^{\top},f\equiv 0,\vartheta(z)=\sum_{i=1}^nh(z_i)$ with $h(t)=t^2\!-\!1$ for $t\notin\![-1,1]$ and $0$ for $t\in\![-1,1]$, and $\mathcal{M}=\!\{x\in\mathbb{R}^n\,|\,\|x\|=1\}$. Since $f\equiv 0$, when constructing the potential function $\Xi$, we remove the term $\langle s,v\rangle$. Then, $\Xi(w)=\vartheta(F(x)+Qv)+\delta_{T\mathcal{M}}(x,v)+\frac{\alpha^*}{2}\|v\|^2$ for $w=(x,v,Q)\in\mathbb{W}=\mathbb{R}^n\times\mathbb{R}^n\times\mathbb{R}^{n\times n}$. For $w^*=(x^*,v^*,Q^*)$ with $x^*=(1,0,\ldots,0)^{\top}\in\mathcal{M},v^*=0$ and any $Q^*\in\mathbb{R}^{n\times n}$, it is easy to check that $w^*\in(\partial\Xi)^{-1}(0)$. For this example, the implication \eqref{KL-imply} does not hold for  $(\Delta_{x},\Delta_{z})$ with $\Delta_{x}=-\nabla F(x^*)\Delta_{z}$ and $\Delta_{z}\in\mathbb{Z}\backslash\{0\}$, i.e., the criterion in \cite[Theorem 3.2]{LiPong2018} fails in checking the KL property of $\Xi$ with exponent $\frac{1}{2}$. Next we claim that $\Xi$ has the KL property of exponent $\frac{1}{2}$ at $w^*$ by Proposition \ref{KL-prop}.  To this end, consider any $(\Delta_{x},\Delta_{z})\in \big\{(\zeta,\xi)\in N_{x^*}\mathcal{M}\times\tau\partial \vartheta(F(x^*))\ |\ \tau>0\}$. Then there exist $\tau_1\in\mathbb{R}$ and $\tau_2\ge0$ such that $\Delta_{x}=(\tau_1,0,\ldots,0)^{\top}$ and $\Delta_{z}=(\tau_2,0,\ldots,0)^{\top}$. While by the expression of $F$, if $\Delta_{x}+\nabla F(x^*)\Delta_{z} = 0$ holds, then  $\tau_1=\tau_2=0$, which shows that the implication \eqref{KL-imply} holds. By Proposition \ref{KL-prop}, it suffices to argue that $\phi(w,z)=\vartheta(z)+\delta_{T\mathcal{M}}(x,v)+\frac{\alpha^*}{2}\|v\|^2$ for $w\in\mathbb{W}$ and $z\in\mathbb{R}^n$ has the KL property of exponent $1/2$ at $(w^*,F(x^*))$. Since $\vartheta$ is a finite piecewise linear-quadratic convex function, it is a KL function of exponent $\frac{1}{2}$, so we only need to argue that $g(w):=\delta_{T\mathcal{M}}(x,v)+\frac{\alpha^*}{2}\|v\|^2$ for $w\in\mathbb{W}$ has the KL property of exponent $\frac{1}{2}$ at $(x^*,v^*,Q^*)$. By Lemma \ref{lemma-Tb}, at any $(x,v,Q)\in{\rm dom}\,g$,
$\partial g(x,v,Q) =\{(ax+bv;bx+\alpha^*v)\ |\ a,b\in\mathbb{R}\}$. Fix $\delta=1$ and $\varpi=1$. Pick any $(x,v,Q)\in\overline{\mathbb{B}}((x^*,v^*,Q^*),\delta)\cap[g(x^*,v^*,Q^*)<g<g(x^*,v^*,Q^*)+\varpi]$. Obviously, $x\in\mathcal{M}$ and $v\in T_{x}\mathcal{M}$. Then, by noting that $g(x,v,Q)=\frac{\alpha^*}{2}\|v\|^2$ and $x\in\mathcal{M}$, 
\begin{align*}
{\rm dist}^2(0,\partial g(x,v,Q))&=\min_{a,b\in\mathbb{R}}\big[a^2+(b^2+(\alpha^*)^2)\|v\|^2+b^2\big]\\
	&=(\alpha^*)^2\|v\|^2\geq 2\alpha^*\big[g(x,v)-g(x^*,v^*)\big],
\end{align*}		
which shows that the function $g$ has the KL property with exponent $\frac{1}{2}$ at $w^*$.
\end{remark}

\begin{remark}\label{remark2-KLcond}
 Although Proposition \ref{KL-prop} provides a condition for identifying the KL property of $\Xi$ with an explicit exponential $q\in[0,1)$, the condition is not always satisfied. In this case, if $\Xi$ is semialgebraic, by \cite[Example 1 (b)]{Attouch2009} there exists a constant $q\in[0,1)$ such that $\Xi$ satisfies the KL property of exponent $q$, which implies that Algorithm \ref{iRVM} at least has a local sublinear convergence rate though the explicit rate is unknown. Obviously, for Examples \ref{Example1}-\ref{Example4}, the function $\Xi$ is semi-algebraic if $f$ and $F$ are semialgebraic.
 \end{remark}

\section{Dual SNCG for the subproblems}\label{sec6}

This section is devoted to the solving of subproblem \eqref{subprobj} with $\mathcal{Q}_{k,j}$ from Theorem \ref{oracle}. Such $\mathcal{Q}_{k,j}$ is helpful to handle the term $\vartheta(\ell_{F}(x^k\!+v;x^k))$ in \eqref{subprobj} and induce a desirable dual. Fix any $k\in\mathbb{N}$ and $j\in[j_k]$. From \eqref{dualkj}, the dual of subproblem \eqref{subprobj} is 
\begin{equation*}
 \max_{\zeta\in\mathbb{Z}}\Psi_{k,j}(\zeta):=\min_{v\in T_{x^k}\mathcal{M},z\in\mathbb{Z}}\mathcal{L}_{k,j}(v,z,\zeta),
 \end{equation*}
where $\mathcal{L}_{k,j}$ is the Lagrange function defined by \eqref{la-func}. Let $\Phi_{k,j}:=-\Psi_{k,j}$. An elementary calculation yields the dual of \eqref{subprobj} in the minimization form as follows
\begin{equation}\label{EDsubprobj}
\min_{\zeta\in \mathbb{Z}}\Phi_{k,j}(\zeta)=\frac{1}{2\alpha_{k,j}}\|\Pi_{T_{x^k}\mathcal{M}}(\nabla\!F(x^k)\zeta+\!\nabla\! f(x^k))\|^2+\dfrac{1}{2\beta_{k}}\|\zeta\|^2-e_{\beta_{k}^{-1}\vartheta}(F(x^k)+\beta_{k}^{-1}\zeta).
\end{equation}
Due to the strong convexity of \eqref{subprobj}, the strong duality holds for \eqref{subprobj} and \eqref{EDsubprobj}. Since $\Phi_{k,j}$ is a smooth convex function, one can achieve an optimal solution of \eqref{EDsubprobj} and then the unique optimal solution $\overline{v}^{k,j}$ of \eqref{subprobj} by finding a root of the system
\begin{equation}\label{root}
\nabla\Phi_{k,j}(\zeta)=\alpha_{k,j}^{-1}F'(x^k)\Pi_{T_{x^k}\mathcal{M}}(\nabla F(x^k)\zeta+\!\nabla\! f(x^k))+\mathcal{P}_{\!\beta_{k}^{-1}\vartheta}(F(x^k)+\beta_{k}^{-1}\zeta)-F(x^k)=0.
\end{equation}
That is, if $\zeta^*$ is a root of \eqref{root}, then $\overline{v}^{k,j}\!=-\alpha_{k,j}^{-1}\Pi_{T_{x^k}\mathcal{M}}(\nabla F(x^k)\zeta^*\!+\nabla\!f(x^k))$ is the unique optimal solution of \eqref{subprobj}. Let $v^{l}\!:=-\alpha_{k,j}^{-1}\Pi_{T_{x^k}\mathcal{M}}(\nabla F(x^k)\zeta^{l}+\!\nabla\!f(x^k))$ for each $l\in\mathbb{N}$, where $\{\zeta^{l}\}_{l\in\mathbb{N}}$ is an iterate sequence yielded by a solver to the dual \eqref{EDsubprobj}. Clearly, $v^{l}\in T_{x^k}\mathcal{M}$. By the weak duality, $\Theta_{k,j}(\overline{v}^{k,j})-f(x^k)\ge -\Phi_{k,j}(\zeta^{l})$ for all $l\in\mathbb{N}$. Then, with $\Theta_{k,j}^{\rm LB}$ taking the value $f(x^k)-\Phi_{k,j}(\zeta^{l})$, the pair $(v^{l},\Theta_{k,j}^{\rm LB})$ satisfies \eqref{inexact-cond} whenever
\begin{equation}\label{stop-algA}
\Theta_{k,j}(v^{l})\leq \Theta_{k,j}(0)\ \ {\rm and}\ \ \Theta_{k,j}(v^{l})+\Phi_{k,j}(\zeta^{l})-f(x^k)\le \frac{\mu_k}{2}\|v^{l}\|^2. 
\end{equation}
This means that any solver to the dual \eqref{EDsubprobj} can be terminated at the $l$th iteration whenever the vector $v^{l}$ corresponding to $\zeta^{l}$ is such that \eqref{stop-algA} holds. In other words, the conditions in \eqref{stop-algA} can be used as a stopping criterion for any solver to the dual \eqref{EDsubprobj}. 

The convexity of $\vartheta$ implies the Lipschitz continuity of the mapping $\mathcal{P}_{\!\beta_{k}^{-1}\vartheta}$. Further, from  \cite[Proposition 3.1 (b)]{Ioffe2009} and \cite[Theorem 1]{Bolte2009}, $\mathcal{P}_{\!\beta_{k}^{-1}\vartheta}$ is semismooth if $\vartheta$ is definable in an o-minimal structure over the real field, satisfied by the function $\vartheta$ corresponding to Examples \ref{Example1}-\ref{Example3}. Together with \cite[Proposition 7.4.4]{Facchinei2003}, the associated system \eqref{root} is semismooth. This inspires us to seek a root of \eqref{root} with the semismooth Newton method. From \cite{Hiriart-Urruty1984}, the generalized Hessian of $\Phi_{k,j}$ at $\zeta$ is defined by $\partial^2\Phi_{k,j}(\zeta)\!:=\partial_{C}(\nabla\Phi_{k,j})(\zeta)$, where $\partial_{C}(\nabla\Phi_{k,j})(\zeta)$ is the Clarke's Jacobian of $\nabla\Phi_{k,j}$ at $\zeta$. Since the characterization of $\partial_{C}(\nabla\Phi_{k,j})(\zeta)$ is not an easy task, we replace it with
\begin{equation}\label{GHessian}
 \widehat{\partial}^2\Phi_{k,j}(\zeta)d\!:=\alpha_{k,j}^{-1}F'(x^k)\Pi_{T_{x^k}\mathcal{M}}(\nabla\! F(x^k)d)+\!\beta_{k}^{-1}\partial_C \mathcal{P}_{\!\beta_{k}^{-1}\vartheta}(F(x^k)+\!\beta_{k}^{-1}\zeta)d\ \ \forall d\in\mathbb{Z}.
\end{equation}
From \cite[Page 75]{Clarke1983}, for any $d\in\mathbb{Z}$, $\partial^2\Phi_{k,j}(\zeta)d\subset\widehat{\partial}^2\Phi_{k,j}(\zeta)d$. The iterations of the semismooth Newton method for \eqref{EDsubprobj} are described in Algorithm \ref{SNCG}. Its global and local convergence analysis can be found in \cite[Proposition 3.3 \& Theorem 3.4]{ZhaoSun2010}. 
\begin{algorithm}[h]
 \begin{algorithmic}[1]
 \renewcommand{\thealgorithm}{A}
 \caption{\label{SNCG}{\bf (Semismooth Newton-CG)}}
 \State Input: $k,j\in\mathbb{N}$, $\overline{\eta}\in(0,1),\tau\in(0,1], \tau_1,\tau_2\in(0,1),0<\varrho_1<\frac{1}{2},\delta\in(0,1),\zeta^0\in\mathbb{Z}$.
		
 \For {$l=0,1,2,\ldots$}
		
 \State Given a maximum number of CG iterations $n_{l}$. Apply the practical CG $(\eta_{l},n_l)$ \hspace*{0.56cm}in \cite[Algorithm 1]{ZhaoSun2010} with $\eta_{l}\!:=\min \{\overline{\eta},\|\nabla\Phi _{k,j}(\zeta^{l})\|^{1+\tau}\}$ to find an approximate \hspace*{0.56cm} solution $d^{l}$ to \eqref{linear-system} with $\mathcal{V}_l\in\widehat{\partial}^2\Phi_{k,j}(\zeta^{l})$ and $\varepsilon_{l}\!:=\tau_1\min\{\tau_2,\|\nabla\Phi_{k,j}(\zeta^{l})\|\}$:
 \begin{equation}\label{linear-system}
 (\mathcal{V}_{l}+\varepsilon_{l}\mathcal{I})d=-\nabla\Phi_{k,j}(\zeta^{l}).
 \end{equation}

\State  Let $m_{l}$ be the first nonnegative integer $m$ such that
		\begin{equation*}
			\Phi_{k,j}(\zeta^{l}+\delta^{m}d^{l})\leq \Phi_{k,j}(\zeta^{l}) + \varrho_1\delta^{m}\langle\nabla\Phi_{k,j}(\zeta^{l}),d^{l}\rangle.
		\end{equation*}
		\State Set $\zeta^{l+1}=\zeta^{l}+\delta^{m_{l}}d^{l}$.
		\EndFor	
	\end{algorithmic}
\end{algorithm}
\begin{remark}\label{remark-AlgA}
 {\bf(a)} The convexity of $\vartheta$ means that every element of $\partial_C \mathcal{P}_{\!\beta_{k}^{-1}\vartheta}(F(x^k)\!+\!\beta_{k}^{-1}\zeta)$ is positive semidefinite. Therefore, for  $\mathcal{V}_l\in\widehat{\partial}^2\Phi_{k,j}(\zeta^{l})$, the linear mapping $\mathcal{V}_{l}+\varepsilon_{l}\mathcal{I}$ from $\mathbb{Z}$ to $\mathbb{Z}$ is positive definite, so the conjugate gradient (CG) method is able to solve \eqref{linear-system} efficiently. 

 \noindent
 {\bf(b)} From the above discussion, when Algorithm \ref{SNCG} is applied to solve the subproblems, we terminate its iteration at the iterate $\zeta^{l}$ whenever the condition \eqref{stop-algA} is satisfied. Its parameters are chosen to be $n_{l}\equiv 100,\overline{\eta}=10^{-2},\tau=0.1$, $\tau_1=1,\tau_2=10^{-3},\varrho_1=10^{-4}$ and $\delta=0.5$.   
\end{remark} 

Now we take a look at the choice of $\mathcal{V}_{l}\in\widehat{\partial}^2\Phi_{k,j}(\zeta^{l})$, which is used in the numerical experiments of Section \ref{sec7.2} for Examples \ref{Example1}-\ref{Example3}. Fix any $d\in\mathbb{Z}$. From the above \eqref{GHessian}, the element $\mathcal{V}_{l}d$ is the sum of $\alpha_{k,j}^{-1}F'(x^k)\Pi_{T_{x^k}\mathcal{M}}(\nabla\!F(x^k)d)$ and $\beta_{k}^{-1}\mathcal{W}_{l}d$ with some $\mathcal{W}_{l}$ from the set $\partial_C \mathcal{P}_{\!\beta_{k}^{-1}\vartheta}(F(x^k)\!+\beta_{k}^{-1}\zeta)$. Observe that $\alpha_{k,j}^{-1}F'(x^k)\Pi_{T_{x^k}\mathcal{M}}(\nabla\! F(x^k)d)$ is known once the expression of the mapping $\Pi_{T_{x^k}\mathcal{M}}(\cdot)$ is available. For the manifold $\mathcal{M}$ in Examples \ref{Example1}-\ref{Example2}, by \cite[Example 3.6.2]{Absil2008} the operator $\Pi_{T_{x^k}\mathcal{M}}(\cdot)$ is specified as
\[
\Pi_{T_{x^k}\mathcal{M}}(u)=(I_{n}-x^k(x^k)^{\top})u+x^k{\rm skew}((x^k)^{\top}u)\ \ {\rm for}\ u\in\mathbb{R}^{n\times r};
\]
while for the $\mathcal{M}$ in Example \ref{Example3}, by virtue of \cite[Proposition 2]{GaoSon2021} it takes the form of
\[
\Pi_{T_{x^k}\mathcal{M}}(u)=u+J_{2n}x^ku^{*},
\]
where $u^*$ is the unique solution of the following Lyapunov equation of variable $u$:
\[
(x^{k})^{\top}x^ku+u(x^{k})^{\top}x^k =u^{\top}J_{2n}x^k-(J_{2n}x^k)^{\top}u.
\]
Next we focus on the choice of $\mathcal{W}_{l}\in\partial_C \mathcal{P}_{\!\beta_{k}^{-1}\vartheta}(F(x^k)+\beta_{k}^{-1}\zeta)$. For Example \ref{Example1}, recall that $\vartheta(Z)=\lambda\|Z\|_1$ for $Z\in\mathbb{S}^n$, so we choose $\mathcal{W}_{l}$ in the following way 
\[
 [\mathcal{W}^ld]_{ij}=\left\{\begin{array}{cl}
 d_{ij} &\ {\rm if}\ [F(X^k)\!+\!\beta_{k}^{-1}\zeta^l]_{ij}\ge\beta_{k}^{-1}\lambda,\\
 0 &\ {\rm otherwise}
 \end{array}\right.\ \ {\rm for}\ i,j\in[n]_{+}.
\]
For Example \ref{Example2}, since $\vartheta(X;Z)=\lambda\|X\|_{2,1}+\rho\|Z\|_1$ and $F(X)=(X;E\circ(X^{\top}B^{\top}BX))$ for $X\in\mathbb{R}^{n\times r}$ and $Z\in\mathbb{S}^r$, it holds $\mathcal{W}_{l}d=[\mathcal{W}_{l}^{1}d^1;\mathcal{W}_{l}^{2}d^2]$ for $d=[d^1;d^2]\in\mathbb{R}^{n\times r}\times\mathbb{S}^r$ with $\mathcal{W}_{l}^{1}\in\mathcal{P}_{\!\beta_{k}^{-1}\lambda\|\cdot\|_{2,1}}(X^k\!+\beta_{k}^{-1}\zeta^l_1)$ and $\mathcal{W}_{l}^{2}\in\mathcal{P}_{\!\beta_{k}^{-1}\rho\|\cdot\|_{1}}(Y^k\!+\beta_{k}^{-1}\zeta^l_2)$ for $\zeta^{l}=[\zeta_1^{l};\zeta_2^{l}]\in\mathbb{R}^{n\times r}\times\mathbb{S}^r$, where $Y^k=E\circ((X^k)^{\top}B^{\top}BX^k)$. We choose $\mathcal{W}_{l}^{1}$ and $\mathcal{W}_{l}^{2}$ by
\begin{align*}
 [\mathcal{W}_{l}^{1}d^1]_{i\cdot}&=\left\{\begin{array}{cl}
  \left(1-\frac{\beta_{k}^{-1}\lambda}{\|H^{k,l}_{i\cdot}\|_2}\right)d^1_{i\cdot}+\frac{\beta_{k}^{-1}\lambda H^{k,l}_{i\cdot}(d^1_{i\cdot})^{\top}}{\|H^{k,l}_{i\cdot}\|_2^3}H^{k,l}_{i\cdot}&\ {\rm if}\ \|H^{k,l}_{i\cdot}\|_2>\beta_{k}^{-1}\lambda,\\
    0  &\ {\rm otherwise};
  \end{array}\right.\\
 [\mathcal{W}_{l}^{2}d^2]_{ij}&=\left\{\begin{array}{cl} 
  1 &\ {\rm if}\ [Y^k\!+\!\beta_{k}^{-1}\zeta^l_2]_{ij}\ge \beta_{k}^{-1}\rho,\\
  0 &\ {\rm otherwise},
 \end{array}\right.\ \ {\rm for}\ i,j\in[r]_{+},
\end{align*}
where $H^{k,l}\!:=X^k\!+\beta_{k}^{-1}\zeta^l_1$ for each $l\in\mathbb{N}$. For Example \ref{Example3}, since $\vartheta(Z;X)=\|Z\|_{F}+\lambda\|X\|_1$ and $F(X)=(X;XX^{+}A-A)$, it holds $\mathcal{W}_{l}d=[\mathcal{W}_{l}^{1}d^{1};\mathcal{W}_{l}^{2}d^2]$ for $d=[d^1;d^2]\in\mathbb{R}^{2n\times 2m}\times\mathbb{R}^{2n\times 2r}$ with $\mathcal{W}_{l}^{1}\in\mathcal{P}_{\!\beta_{k}^{-1}\|\cdot\|_{F}}(X^k\!+\!\beta_{k}^{-1}\zeta^l_1)$ and $\mathcal{W}_{l}^{2}\!\in\mathcal{P}_{\!\beta_{k}^{-1}\lambda\|\cdot\|_{1}}(Y^{k}\!+\!\beta_{k}^{-1}\zeta^l_2)$ for $\zeta^{l}=[\zeta_1^{l};\zeta_2^{l}]\in\mathbb{R}^{2n\times 2m}\times\mathbb{R}^{2n\times 2r}$. The elements $\mathcal{W}_{l}^{1}$ and $\mathcal{W}_{l}^{2}$ are chosen by
\begin{align*}
 \mathcal{W}_{l}^{1}d^1&=\left\{\begin{array}{cl}
  \left(1-\frac{\beta_{k}^{-1}}{\|H^{k,l}\|_F}\right)d^1+\frac{\beta_{k}^{-1}{\rm tr}[(H^{k,l})^{\top}d^1]}{\|H^{k,l}\|_F^3}H&\ {\rm if}\ \|H^{k,l}\|_F\ge\beta_{k}^{-1},\\
    0  &\ {\rm otherwise};
  \end{array}\right.\\
 [\mathcal{W}_{l}^{2}d^2]_{ij}&=\left\{\begin{array}{cl} 
  1 &\ {\rm if}\ [Y^k\!+\!\beta_{k}^{-1}\zeta^l_2]_{ij}\ge \beta_{k}^{-1}\lambda,\\
  0 &\ {\rm otherwise},
 \end{array}\right.\ \ {\rm for}\ (i,j)\in[2n]_{+}\times[2r]_{+},
\end{align*}
where $H^{k,l}\!:=X^k+\beta_{k}^{-1}\zeta^l_1$ for each $l\in\mathbb{N}$ and $Y^k=X^k(X^k)^{+}A-A$.

For the numerical experiments of the next section, we will achieve a pair $(v^{l},\Theta_{k,j}^{\rm LB})$ satisfying \eqref{inexact-cond} with Algorithm \ref{SNCG} applied to system \eqref{root}, rather than Algorithm DFG \cite{Necoara2016} applied to \eqref{EDsubprobj}, though the oracle complexity of Algorithm \ref{iRVM} armed with Algorithm DFG was established in Theorem \ref{oracle}. Figure \ref{DFO-SNCG} accounts for this, where RiVMPL-SNCG and RiVMPL-DFG are Algorithm \ref{iRVM} with the subproblems solved by Algorithm \ref{SNCG} and Algorithm DFG, respectively, and the parameters of Algorithm \ref{iRVM} are chosen in the same way as in Section \ref{sec7.1}. Notice that Algorithm DFG in \cite{Necoara2016} is precisely the FISTA proposed in \cite{beck2009fast} for solving \eqref{EDsubprobj}.
\begin{figure}[h]
 \centering
 \includegraphics[scale=0.45]{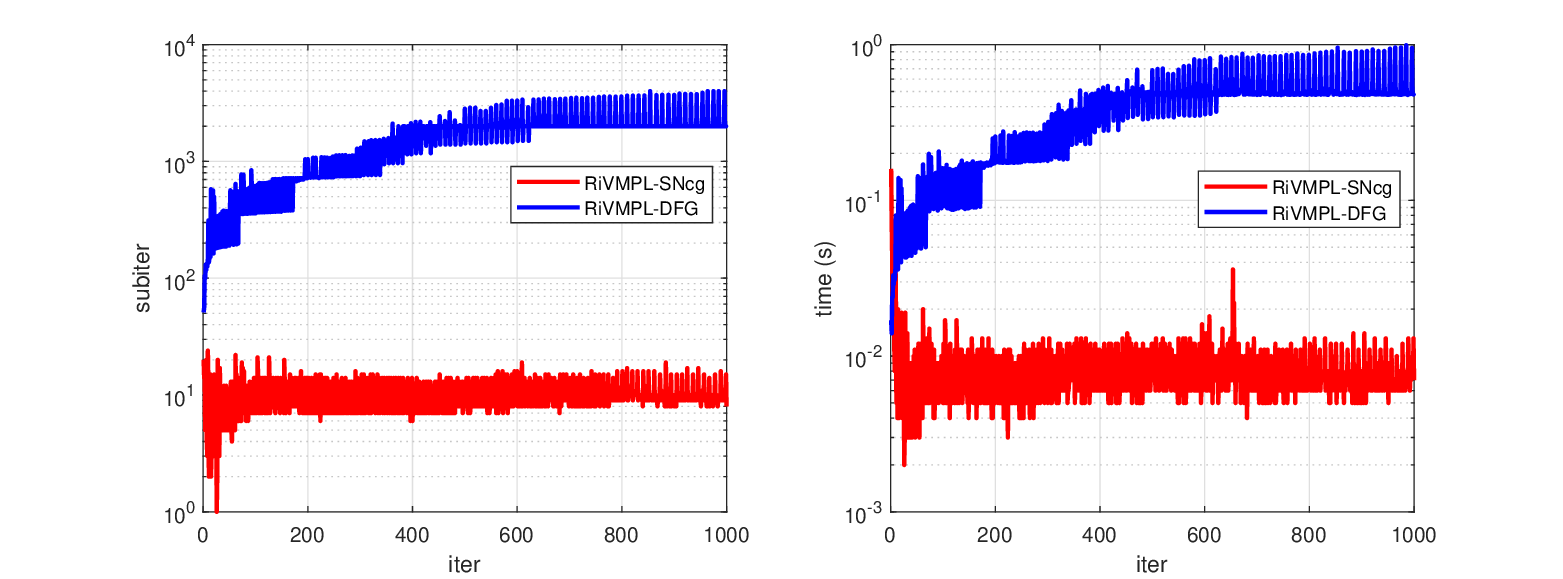}
 \caption{The total iterations and time in seconds for all subproblems at each iteration of RiVMPL-SNCG and RiVMPL-DFG for solving the synthetic example from Section \ref{sec7.3.2} with $(m,n,q)=(50,10^3,5)$ and $(\lambda,\rho)=(2.05,0.5)$.}
 \label{DFO-SNCG}
\end{figure}   
\section{Numerical experiments}\label{sec7}

This section tests the performance of Algorithm \ref{iRVM} armed with Algorithm \ref{SNCG} (known as RiVMPL) for solving Examples \ref{Example1}-\ref{Example3}, and compare its performance with that of RADMM \cite{LiMa2024} and RiALM \cite{Xu2024}. Our code can be downloaded from \url{https://github.com/SCUT-OptGroup/iVM_ManComp}. Since Algorithm \ref{iRVM} is an inexact and variable metric version of ManPL \cite{Wang2022}, we do not make comparison with it. 
\subsection{Choice of parameters}\label{sec7.1}

A good initial estimation for $\overline{L}_k$ in Section \ref{sec3} can reduce the computation cost of the inner loop greatly. Consider that ${\rm lip}\,\vartheta(F(x^k))$ is usually known. We choose
\[
\alpha_{k,0}=0.2\min\big\{\max\big\{{\rm lip}\,\vartheta(F(x^k))L_{\nabla\! F,k}\!+\!L_{\nabla\!f,k},\alpha_{\rm min}\big\},\alpha_{\rm max}\big\}\ \ {\rm for}\ k\in\mathbb{N}^*,
\]
where the coefficient $0.2$ aims at capturing tighter estimation, and $L_{\nabla\!F,k}$ and $L_{\nabla\!f,k}$ are the estimation for ${\rm lip}\,F'(x^k)$ and ${\rm lip}\,\nabla\!f(x^k)$ by the Barzilai-Borwein rule \cite{Barzilai1988}:
\begin{align}\label{BB-Lf0}
 L_{\nabla F,k}=\frac{\|\nabla\!F(x^k)\zeta^{k,j}\|}{\|\zeta^{k,j}\|}\ \ {\rm and}\ \
 L_{\nabla\!f,k}=\frac{\|\Delta y^k\|^2}{|\langle\Delta x^k,\Delta y^k\rangle|},
\end{align}
with $\zeta^{k,j}$ being the output of Algorithm \ref{SNCG} to solve \eqref{subprobj}, $\Delta x^k:=x^k-x^{k-1}$ and $\Delta y^k:=\nabla\!f(x^k)-\nabla\!f(x^{k-1})$. For the initial $\alpha_{0,0}$, we use $0.5\|A\|$ for the experiments in Section \ref{sec7.3.1},  $0.5\|B^{\top}B\|$ for those in Section \ref{sec7.3.2}, and $10^{-5}$ for those in Section \ref{sec7.3.3}. 

The choice of $\mu_k$ involves a trade-off between the computation cost of the inner loop and the quality of the iterates. It is reasonable to require the iterates to satisfy a more stringent inexactness restriction and then have better quality as the iteration proceeds. This inspires us to choose $\mu_k=\max\big\{\frac{\mu_{\rm max}}{\sqrt{k}},1\big\}$. Figure \ref{SSC_mu_max} below shows that the objective values yielded by Algorithm \ref{iRVM} with such $\mu_k$ are almost not affected by $\mu_{\rm max}\in[1,1000]$. The NMI scores (see Section \ref{sec7.3.1} for its definition) with $\mu_{\rm max}\in[1,1000]$ are basically the same, but the objective value with $\mu_{\rm max}\in[1,50]$ is a little higher than those with $\mu_{\rm max}\in[100,1000]$ and the running time of the former is more than that of the latter. Therefore, we always choose $\mu_{\rm max}=500$ for the subsequent tests. 
\begin{figure}[h]
 \centering
\includegraphics[scale=0.38]{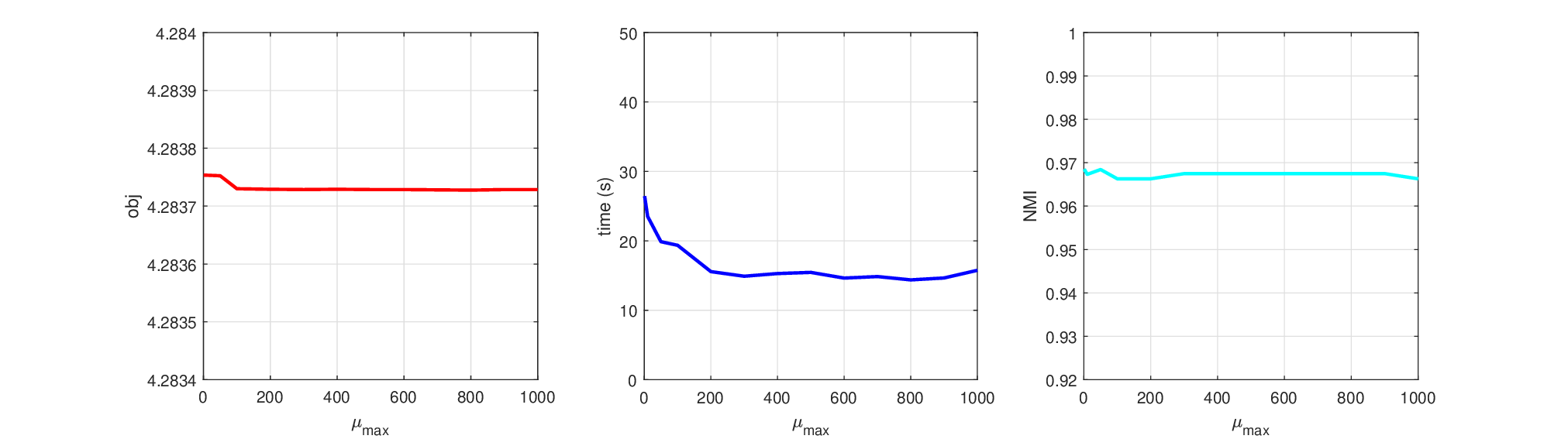}
\caption{The objective values, running time and NMI scores by RiVMPL with different $\mu_{\max}$ for the synthetic example in \cite[Part E.1]{Park2018} with $(n,q)=(300,6)$ and $\lambda=5\times 10^{-5}$.}
\label{SSC_mu_max}
\end{figure}  
\begin{figure}[h]
\centering
\includegraphics[scale=0.45]{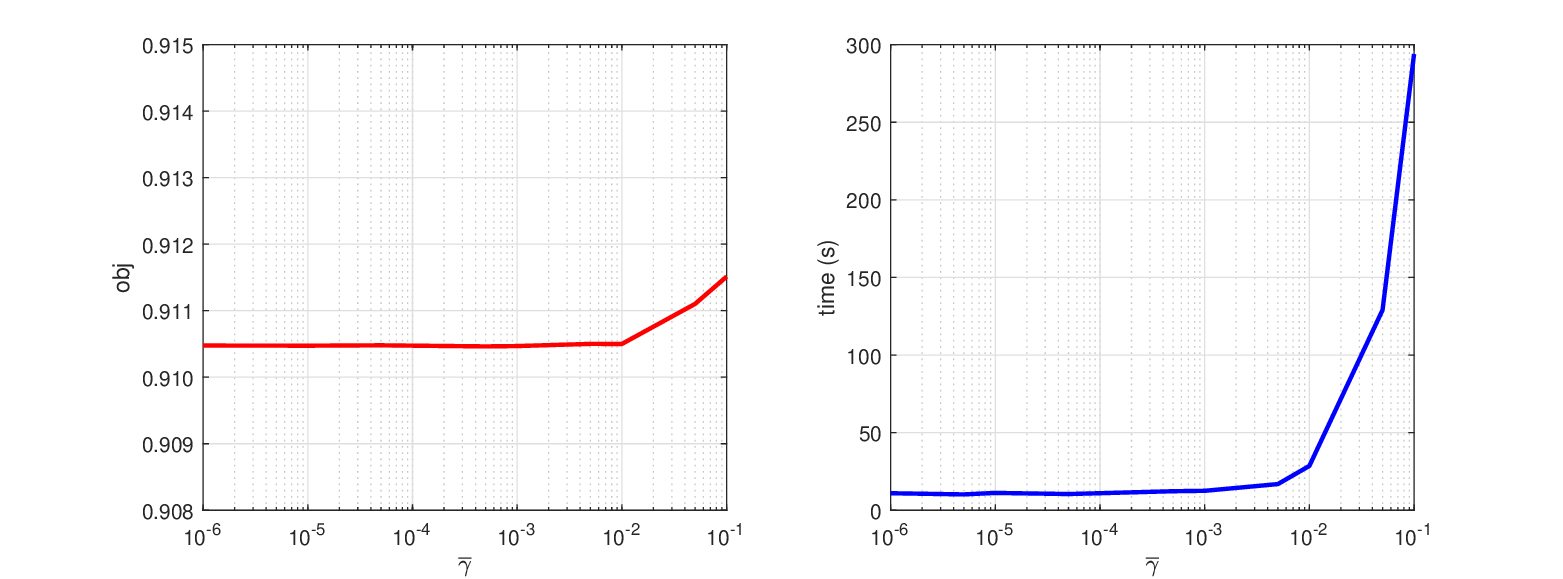}
\caption{ The objective value and running time of RiVMPL for problem \eqref{reg-psdprob} with $\lambda=0$ and the data matrix $A$ of type I in Section \ref{sec7.3.3}.}
\label{SDec_gamma}
\end{figure}

For the parameter $\overline{\gamma}$ in step 6, Figure \ref{SDec_gamma} shows that it has a tiny influence on the objective value and the running time for $\overline{\gamma}<0.01$, but if $\overline{\gamma}>0.01$ the running time increases sharply. Inspired by this, we choose $\overline{\gamma}=10^{-5}$ for the subsequent tests. 

In addition, we choose the QR decomposition to be the retraction for the tests in Sections \ref{sec7.3.1}-\ref{sec7.3.2}, and the Cayley transformation in \cite{Bendokat2021} to be the retraction for the tests in Section \ref{sec7.3.3}. The other parameters of Algorithm \ref{iRVM} are chosen to be $\alpha_{\rm min}=10^{-6},\,\alpha_{\rm max}=10^{6},\,\overline{\alpha}=10^6$ and $\sigma=2.5$. 
For the parameter $\beta_{k}$ involved in the linear mapping $\mathcal{Q}_{k,j}=\alpha_{k,j}\mathcal{I}+\beta_{k}\nabla F(x^k)F'(x^k)$, we update it by the rule 
\[
\beta_{k+1}=\left\{\begin{array}{cl}
	\max\big\{\beta_k/1.1,10^{-6}\big\} &{\rm if}\ {\rm mod}(k,50)=0,\\
	\beta_k &{\rm otherwise}
\end{array}\right.\ \ {\rm with}\ \beta_0=0.01. 
\]
The smaller $\beta_0$ means that the subproblems are closer to model \eqref{prob}, but its solving becomes more difficult due to the weaker role of the proximal term $\frac{\beta_k}{2}\|F'(x^k)v\|^2$. 
\subsection{Validation of convergence rate}\label{sec7.2}

We validate the convergence rate of Algorithm \ref{iRVM} applied to Examples \ref{Example1}–\ref{Example3}. Figure \ref{con_rate} depicts its convergence behavior for solving these examples. In each subfigure, the blue and red curves plot the change of the function value and the iterate, respectively, as the iteration proceeds. Figure \ref{con_rate} shows that Algorithm \ref{iRVM} has a local linear convergence rate, most notably for Examples \ref{Example1}–\ref{Example2}, which is consistent with our theoretical results.

\begin{figure}[h]
\centering
\includegraphics[scale=0.38]{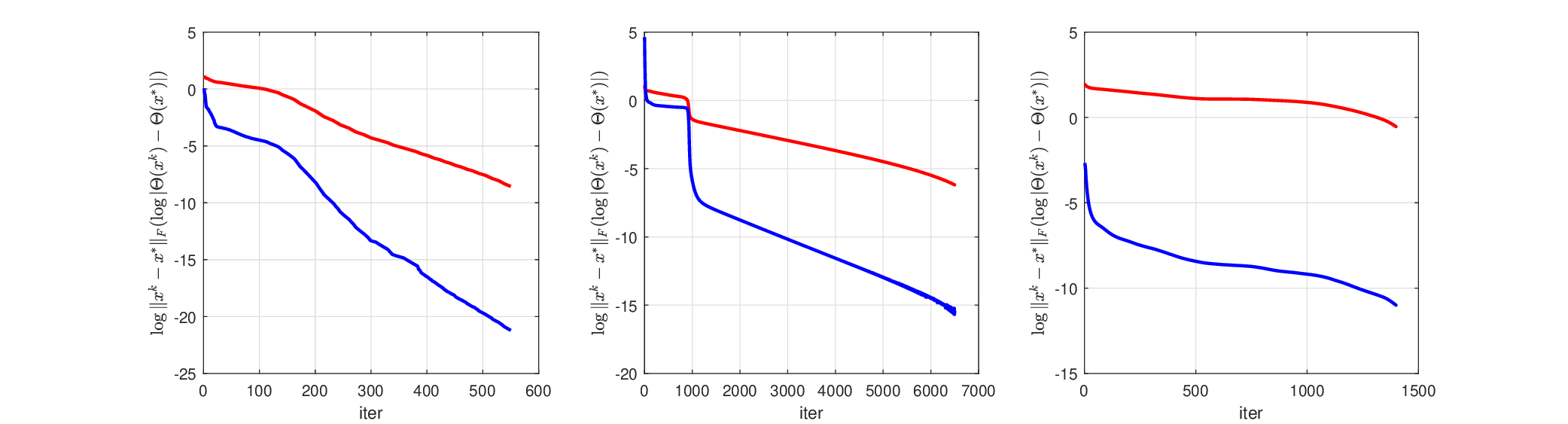}
\caption{\small Convergence behavior of Algorithm~\ref{iRVM} for Examples~\ref{Example1}–\ref{Example3}. Here $x^*$ is the final output of Algorithm~\ref{iRVM}. For Example~\ref{Example1} (left subfigure), we use the synthetic example from \cite[Part E.1]{Park2018} with $(n,q)=(300,6)$ and $\lambda=5\times10^{-5}$; for Example~\ref{Example2} (middle subfigure), we use the synthetic example in Section~\ref{sec7.3.2} with $(m,n,q)=(50,10^3,5)$ and $(\lambda,\rho)=(2.05,0.5)$; and for Example~\ref{Example3} (right subfigure), we use the data $A$ of type I in Section~\ref{sec7.3.3} with $\lambda=10^{-4}$.}
\label{con_rate}
\end{figure}
\subsection{Numerical comparisons}\label{sec7.3} 

The RADMM in \cite{LiMa2024} is originally proposed for problem \eqref{prob} with a linear $F$ but is applicable to \eqref{prob} itself (see \url{https://github.com/JasonJiaxiangLi/RADMM} for the code). Its basic idea is to replace the nonsmooth $\vartheta$ by its Moreau envelope $e_{\gamma\vartheta}$ and apply the ADMM to the resulting problem. The detailed iterations are seen in \cite[Algorithm 1]{LiMa2024}. For the subsequent tests, the parameters $\rho$ and $\gamma$ of RADMM are chosen to be $\rho=50$ and $\gamma=10^{-8}$ as used in \cite{LiMa2024}. Similar to \cite{LiMa2024}, the constant step-size $\eta_k\equiv \eta$ is used for the tests and its value is specified in the experiments. For the RiALM \cite{Xu2024}, since its code is unavailable, we implement it in the same way as described in \cite{Xu2024}, i.e., to seek the approximate $x^{k+1}$ by using the Riemannian gradient descent method with a backtrack line search and a Riemannian BB initial step-size as in \cite{Wen2013}. For the parameters $\sigma_1,b_1$ and $\varepsilon_1$ of RiALM \cite[Algorithm 1]{Xu2024}, we choose $\sigma_1=1.5$ and $b=1.5$ as suggested in \cite{Xu2024}, and $\varepsilon_1$ by the type of test problems to achieve better numerical results.  

Since the code of RADMM uses the objective values as the stopping criterion, to keep in step with it, we terminate the iterations of three methods at $x^k$ when
\begin{equation}\label{stop-cond}
\frac{|\Theta(x^{k})-\Theta(x^{k-1})|}{\max\{1,|\Theta(x^k)|\}}\le\epsilon^*.
\end{equation}
Figure \ref{r21PCA_obj_time} shows that the objective values by RiVMPL and RADMM have slower decrease, and their running times also have a gentle growth as the iteration proceeds; RiALM yields a lower objective value within the first $15$ iterations, but as the iteration proceeds, the objective value has a tiny improvement whereas its running time increases quickly. In view of this, we terminate the three methods under \eqref{stop-cond} with different $\epsilon^*$, i.e., $\epsilon_{\rm RiVMPL}^*,\epsilon_{\rm RiALM}^*$ and $\epsilon_{\rm RADMM}^*$ respectively for RiVMPL, RiALM and RADMM. We set the maximum number of iterations $k_{\rm RiVMPL}\!=5000,k_{\rm RiALM}=100$ and $k_{\rm RADMM}=10^5$ for RiVMPL, RiALM and RADMM. 
\begin{figure}[h]
\centering
\includegraphics[scale=0.5]{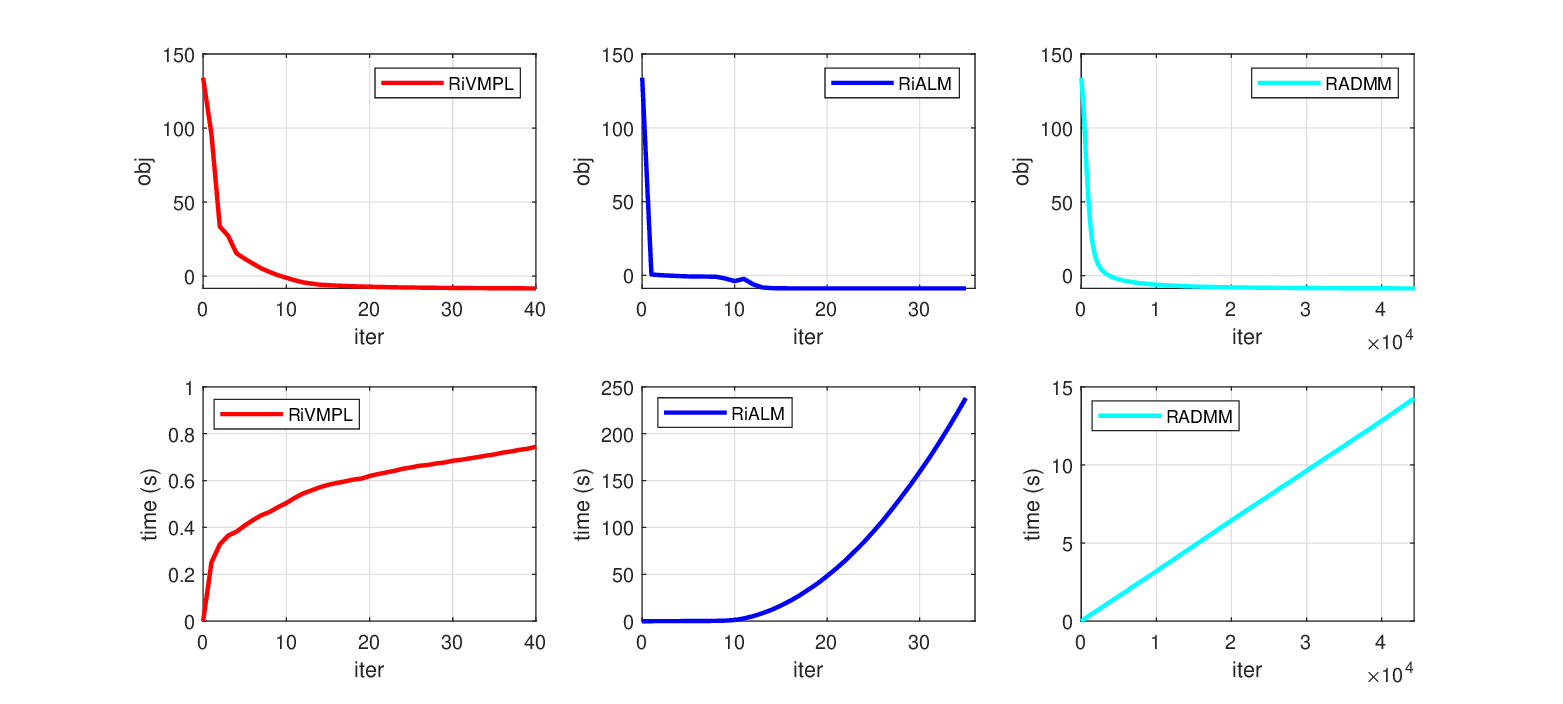}
\caption{The objective values and running time of three methods for the synthetic example in Section \ref{sec7.3.2} with $(m,n,q)=(50,10^3,5)$ and $(\lambda,\rho)=(2.05,0.5)$.}
	\label{r21PCA_obj_time}
\end{figure}

The next three subsections report the numerical results of RiVMPL, RiALM and RADMM for solving the problems from Examples \ref{Example1}-\ref{Example3} with synthetic and real data. All numerical tests are conducted on a desktop running 64-bit Windows System with an Intel(R) Core(TM) i5-8400 CPU 2.80GHz and 8.00 GB RAM on Matlab 2024b. All figures in this section are plotted with the average results of the total $\textbf{10}$ runs.
\subsubsection{Sparse spectral clustering}\label{sec7.3.1}

We apply the three methods to solve problem \eqref{ssc} with the real data sets used in \cite{Park2018}. For the data sets ``Macosko'' and ``Tasic'', we  extract those samples corresponding to the true label not greater than $\textbf{4}$ and $\textbf{9}$ respectively for testing. We follow \cite{Park2018} to construct the similarity matrices and compute the normalized Laplacian matrices $A$. To measure the effect of the clustering, we use the normalized mutual information (NMI) scores in \cite{Strehl2002}, and directly call the ``kmeans'' of Matlab to compute the label corresponding to the outputs of three methods. The higher NMI scores indicate the better clustering performance. We run RiVMPL with $\epsilon_{\rm RiVMPL}^*=10^{-8}$, RADMM with $\eta=0.1$ and $\epsilon_{\rm RADMM}^*=10^{-10}$, and RiALM with $\varepsilon_1=10^{-3}$ and $\epsilon_{\rm RiALM}^*=10^{-8}$.   
\begin{table*}[h]
	\centering
	\caption{\centering Numerical results of the three methods for SSC problems with real data.}\label{realSSC-tab}
	\scalebox{0.79}[0.79]{
		\begin{tabular}{l|l|ll|ll|ll|ll} 
			\hline    
			\multicolumn{2}{c|}{Data}& \multicolumn{2}{c|}{Buettner}& \multicolumn{2}{c|}{Deng}& \multicolumn{2}{c|}{Schlitzer}& \multicolumn{2}{c}{Pollen}\\
			\hline
			\multicolumn{2}{c|}{$\lambda$}&$10^{-4}$&$10^{-5}$&$10^{-4}$&$10^{-5}$&$10^{-4}$&$10^{-5}$&$10^{-4}$&$10^{-5}$\\
			\hline
			\multirow{7}*{RiVMPL}
			&obj& 1.9920& 1.9717& 5.7533& 5.7365& 1.9937& 1.9661&9.8836&9.8513\\ 
			&NMI& \textbf{0.4834}& \textbf{0.8679}& 0.6876& 0.7217& 0.5000& 0.5580&0.9051&\textbf{0.9387}\\
			&spar& 0.0540& 0.0138& 0.1324& 0.0154& 0.2327& 0.0136&0.4509&0.0640\\
			&time(s)& 8.74& 7.29& 3.40& 3.61& 19.54& 6.46&14.45&12.56\\
            &iter&1490.1&2036.5&991.4&1513.2&1167.0&923.9&1285.8&1916.6\\
			&nsub& 1.96& 1.99&1.97& 1.99& 1.95& 1.98&1.97&1.99\\
			&nSN& 2.22& 1.99& 2.04& 1.99& 2.80& 1.98&2.35&1.99\\
			\hline
			\multirow{5}*{RADMM}
			&obj& 1.9927& 1.9717& 5.7533& 5.7365& 1.9945& 1.9661&9.8850&9.8512\\ 
			&NMI& 0.4620& 0.8458& 0.6738& \textbf{0.7238}& 0.4675& \textbf{0.5678}&0.8923&0.9316\\
			&spar& 0.0388& 0.0140& 0.1251& 0.0178& 0.1619& 0.0142&0.4216& 0.0664\\
			&time(s)& 2.70& 9.69& 2.21& 6.04& 11.08& 5.25&5.14& 14.00\\
            &iter&9194.2&35243.7&9128.7&24841.7&28500.5&13840.3&11549.8&31866.0\\
			\hline
			\multirow{6}*{RiALM}
			&obj& 1.9922& 1.9717& 5.7532& 5.7365& 1.9936& 1.9661&9.8802&9.8512\\ 
			&NMI& 0.4806& 0.8056& \textbf{0.6993}& 0.7098& \textbf{0.5078}& 0.5647&\textbf{0.9075}&0.9339\\
			&spar& 0.0725& 0.0155&0.1348& 0.0178& 0.2575& 0.0141&0.4972&0.0671\\
			&time(s)& 7.61& 3.69& 3.21& 2.01& 12.09& 1.90&25.75&23.38\\
            &iter&13.4&14.1&12.7&12.0&14.9&12.8&17.0&18.0\\
            &iter(sub)&1520.7&1014.4&993.9&788.0&1723.9&389.8&2855.7&3387.7\\
			\hline  
			\hline
			\multicolumn{2}{c|}{Data}& \multicolumn{2}{c|}{Ting}& \multicolumn{2}{c|}{Treutlin}& \multicolumn{2}{c|}{Macosko}& \multicolumn{2}{c}{Tasic}\\
			\hline
			\multicolumn{2}{c|}{$\lambda$}&$10^{-4}$&$10^{-5}$&$10^{-4}$&$10^{-5}$&$10^{-4}$&$10^{-5}$&$10^{-4}$&$10^{-5}$\\
			\hline
			\multirow{7}*{RiVMPL}
			&obj& 3.8529& 3.8399& 3.9205& 3.9098& 2.9578& 2.9003& 7.9883& 7.9441\\ 
			&NMI& \textbf{0.9780}& \textbf{0.9755}& 0.6267& 0.7395& \textbf{0.5713}& \textbf{0.7408}& 0.2130&\textbf{0.3048}\\
			&spar& 0.0714& 0.0187& 0.0302& 0.0059& 0.2619& 0.0501& 0.5932& 0.0904\\
			&time(s)& 1.33& 0.82& 0.81& 0.75& 241.42& 50.60& 75.50&56.49\\
            &iter&379.3&433.4&757.2&835.3&440.0&341.2&1524.6&2698.5\\
			&nsub& 1.97& 1.98& 1.98& 1.99& 1.95& 1.98& 1.94&1.98\\
			&nSN& 1.99& 1.98& 1.98& 1.99& 4.27& 1.98& 3.60&2.00\\
			\hline
			\multirow{5}*{RADMM}
			&obj& 3.8529& 3.8399& 3.9205& 3.9098& 2.9581& 2.9003& 7.9908&7.9440\\ 
			&NMI& 0.9755& \textbf{0.9755}& \textbf{0.6330}& \textbf{0.7455}& 0.5431& \textbf{0.7408}& 0.2043&0.2949\\
			&spar& 0.0703& 0.0187& 0.0290& 0.0061& 0.2616& 0.0505& 0.5069&0.0952\\
			&time(s)& 0.84& 1.10&0.89& 1.20& 190.87& 25.26& 51.00&43.14\\
            &iter&4092.4&5553.9&8099.4&11234.1&34896.0&4657.1&53273.1&41650.1\\
			\hline
			\multirow{6}*{RiALM}
			&obj& 3.8529& 3.8399& 3.9205& 3.9098& 2.9579& 2.9003& 7.9866&7.9440\\ 
			&NMI& 0.9755& \textbf{0.9755}& 0.6300& 0.7184&0.5585 & \textbf{0.7408}& \textbf{0.2337}&0.3001\\
			&spar& 0.0744& 0.0189& 0.0315& 0.0062& 0.2633& 0.0505&0.6377&0.0984\\
			&time(s)& 0.29& 0.22& 0.29& 0.34& 33.37& 4.60& 44.79&30.77\\
            &iter&10.5&10.3&11.0&11.9&12.2&10.0&14.1&14.2\\
            &iter(sub)&120.9&91.7&254.0&267.0&506.9&59.2&2371.8&2270.5\\
			\hline       
		\end{tabular}
	}
\end{table*}

Table \ref{realSSC-tab} reports the average results of the three methods for the $\textbf{10}$ trials from the same starting point $x^0$, generated by the Matlab function ``$\textrm{orth(randn(n,r))}$''. The ``spar'' rows report the approximate sparsity for the outputs of three solvers. Let $Z^{\rm out}=X^{\rm out}(X^{\rm out})^{\top}$ where $X^{\rm out}$ is the output of a solver. The approximate sparsity is defined as $|\{j\in[n^2]\ |\ |[{\rm vec}(Z^{\rm out})]_j|\le 10^{-4}\|{\rm vec}(Z^{\rm out})\|_{\infty}\}|/n^2$.  The ``iter" rows report the average number of iterations and the ``iter(sub)" rows report the average number of iterations of each subproblem of RiALM. The ``nsub'' and ``nSN'' rows report the average number of subproblems and iterations of Algorithm \ref{SNCG} required by each iteration of Algorithm \ref{iRVM}. RiVMPL yields the best NMIs for $\textbf{8}$ instances, while RADMM and RiALM return the best NMIs respectively for $\textbf{6}$ and $\textbf{6}$ instances; it yields the worst NMIs for $\textbf{2}$ instances, while RADMM and RiALM return the worst NMIs respectively for $\textbf{10}$ and $\textbf{4}$ instances. This shows that RiVMPL is superior to them in terms of NMIs. The running time of RiVMPL is comparable with that of RADMM, more than that of RiALM. The objective values by RiVMPL are better than those by RADMM but a little worse than those by RiALM. The sparsity by RiVMPL is comparable with the one yielded by RiALM and RADMM. 
\subsubsection{The $\ell_1$-norm penalty for constrained sparse PCA}\label{sec7.3.2}

We apply the three methods to solve problem \eqref{pen-rspca} with synthetic data $B$. The matrix $B$ is generated randomly in the same way as in \cite[Section 6.3]{Chen2020}, i.e., we first generate a random matrix $B$ using the MATLAB function ``$\textrm{randn(m,n)}$'', then shift the columns of $B$ so that their mean equals $0$, and lastly normalize the columns so that their Euclidean norms are $1$. For all tests in this subsection, the three methods start from the same point $x^0\in\mathcal{M}$ generated by the Matlab command ``$\textrm{orth(randn(n,r))}$'', and stop with $\epsilon_{\rm RiVMPL}^*=5\times 10^{-8},\epsilon_{\rm RiALM}^*=10^{-6}$, and $\epsilon_{\rm RADMM}^*=10^{-9},k_{\rm RADMM}=2\times 10^5$. 

We first examine the relation between the penalty parameter $\rho$ and the $\ell_1$-norm constraint violation of \eqref{rspca} for $\lambda=2.05$ and $(m,n,r)=(50,1000,5)$. Figure \ref{r21PCA_rho} shows that the three methods return the comparable constraint violation for $\rho\in[0.2,1]$, but for $\rho\in[0.05,0.2)$ the constraint violation by RiVMPL is much less than the one by RiALM and RADMM. This means that RiVMPL is more robust than the other two methods. Furthermore, the running time of RiVMPL is the least.  
\begin{figure}[h]
 \centering
 \includegraphics[scale=0.45]{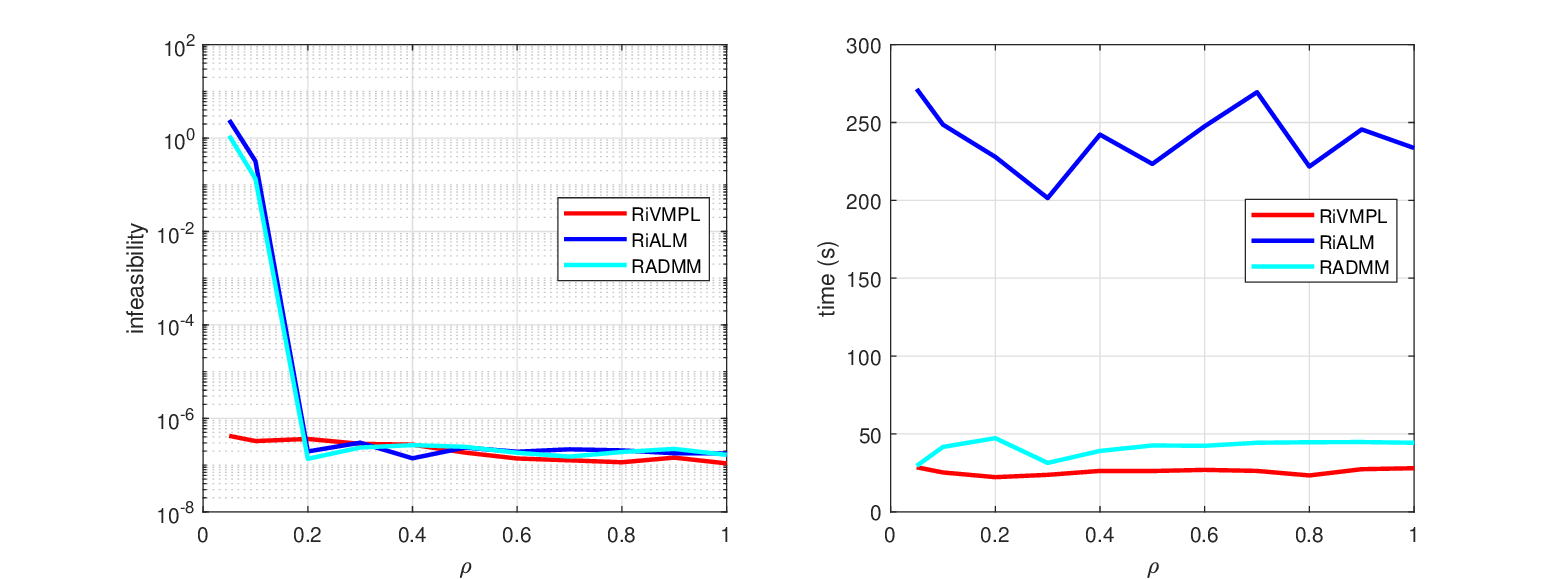}
 \caption{\small The infeasibility and running time of three methods for \eqref{pen-rspca} under different $\rho$.}
 \label{r21PCA_rho}
\end{figure}

Next we take problem \eqref{rspca} with $\rho=0.5$ and $(m,n,r)=(50,1000,5)$ for example to look at how the approximate row sparsity by the three methods varies with $\lambda$. Let $X^{\rm out}$ denote the output of a solver. The approximate row sparsity is defined as $|\{i\in[n]\ |\ \|X_{i\cdot}^{\rm out}\|\le 10^{-4}\max_{i\in[n]}\|X_{i\cdot}^{\rm out}\|\}|/n$. Figure \ref{r21PCA_lambda} indicates that the approximate row sparsity by the three methods is close to $0$ for $\lambda\le 1.0$, and increases gradually as $\lambda$ increases in $[1,2.5]$; when $\lambda$ is greater than $2.5$, the one by RiVMPL and RiALM increases to $1$ sharply, but the one by RADMM almost drops to zero. We also observe that as $\lambda$ increases in $[0.5,3.2]$, the running time of RiALM increases remarkably, but that of RiVMPL and RADMM has no too much variation. 
\begin{figure}[h]
	\centering
	\includegraphics[scale=0.45]{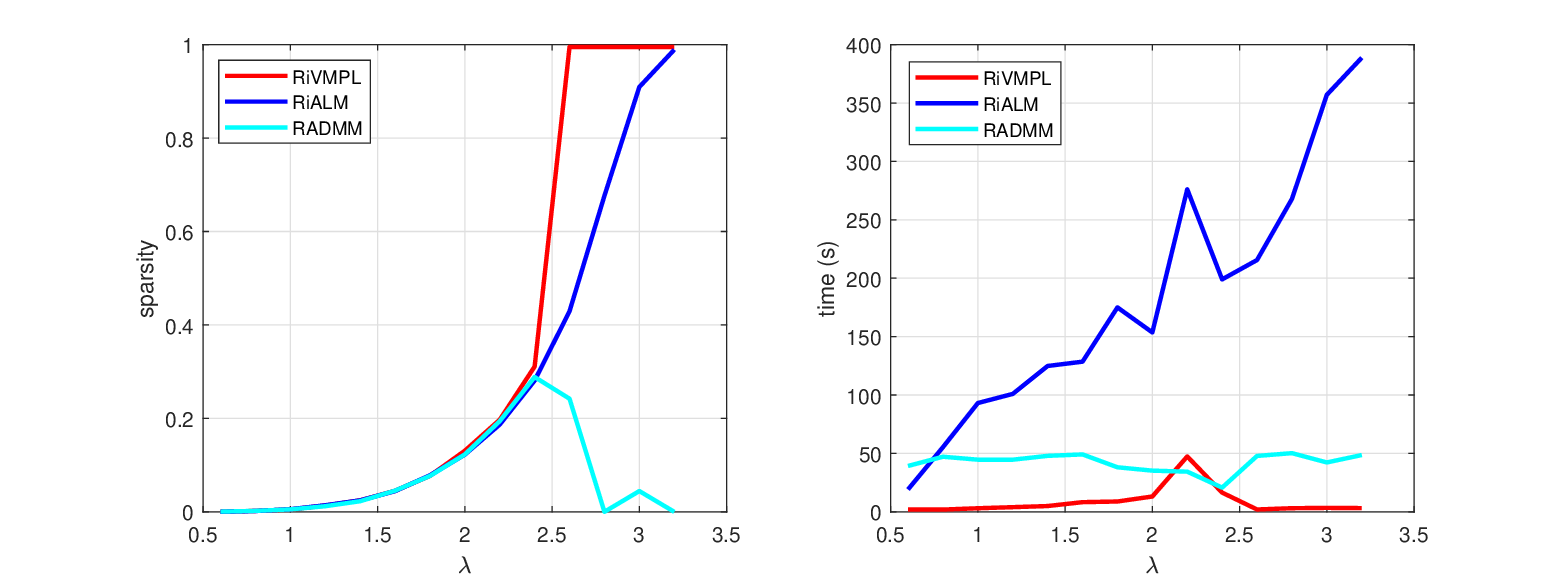}
	\caption{\small The row sparsity and running time of three methods under different $\lambda$.}
	\label{r21PCA_lambda}
\end{figure}
\begin{table*}[h]
	\centering
	\caption{\centering Numerical results of three methods for problem \eqref{pen-rspca} with $(m,\lambda,\rho)=(50,2.0,0.5)$.}\label{r21PCA-tab}
	\scalebox{0.8}[0.80]{
		\begin{tabular}{l|l|lll|lll} 
			\hline    
			\multicolumn{2}{c|}{$(n,r)$}& $(800,5)$& $(1000,5)$& $(1200,5)$& $(1000,3)$& $(1000,7)$& $(1000,9)$\\
			\hline
			\multirow{7}*{RiVMPL}
			&obj& \textbf{-2.6366}& \textbf{-11.7912}& \textbf{-22.2646}& 3.4671& \textbf{-34.5408}&\textbf{-60.8013}\\ 
			&rspar& 0.1964& 0.1307& 0.0917& 0.6040& 0.0266& 0.0045\\
			&time(s)& 17.85& 14.28& 20.58& 14.56&22.13&18.84\\
			&infeas& 5.55e-8& 3.03e-7& 7.92e-7&1.45e-8& 1.28e-6&4.42e-6\\
            &iter&2395.1&1631.0&1881.9&1430.9&1875.6&1249.1\\
			&nsub& 1.11& 1.19& 1.28& 1.00& 1.42&1.65\\
			&nSN& 8.86& 8.69& 9.49& 9.25&8.98&8.48\\
			\hline
			\multirow{5}*{RADMM}
			&obj& -2.3480& -11.6008& -22.1232& 3.7466& -34.4900&-60.7205\\ 
			&rspar& 0.1859& 0.1229& 0.0889& 0.4333& 0.0247&0.0045\\
			&time(s)& 30.88& 37.82& 40.75& 24.70& 72.15&109.11\\
			&infeas& 2.22e-7& 1.95e-7& 1.53e-7&1.22e-1& 1.01e-7&1.41e-7\\
            &iter&113156.1&115311.0&104598.8&200000.0&174445.9&200000.0\\
			\hline
			\multirow{6}*{RiALM}
			&obj& -2.5377& -11.6578& -22.0316& \textbf{3.4304}& -34.3402&-60.5792\\ 
			&rspar& 0.1843& 0.1225& 0.0866& 0.4554& 0.0230&0.0048\\
			&time(s)& 192.95& 168.42& 199.65& 198.35& 215.37&203.59\\
			&infeas& 8.50e-8& 2.13e-7& 4.00e-7& 5.80e-8& 5.87e-7&1.41e-6\\
            &iter&34.6&30.5&30.6&33.4&32.3&30.2\\
            &iter(sub)&3640.4&3475.7&3488.2&3662.2&3560.1&3459.5\\
			\hline   
		\end{tabular}
	}
\end{table*}

In view of Figures \ref{r21PCA_rho}-\ref{r21PCA_lambda}, we test the three methods for solving \eqref{pen-rspca} with $(m,\lambda,\rho)=(50,2.0,0.5)$ for different $n$ and $r$. Table \ref{r21PCA-tab} reports their average results for the $\textbf{10}$ trials, where the ``infeas" rows report the average constraint violations. We see that the objective values and the approximate row sparsity by RiVMPL are significantly better than those by RiALM and RADMM. Its running time is about half of that of RADMM and one-tenth that of RiALM. Comparing with the ``nsub'' and ``nSN'' rows of Table \ref{realSSC-tab}, every iteration of Algorithm \ref{iRVM} solves fewer subproblems but needs more semismooth Newton steps. The latter means that problem \eqref{pen-rspca} is more difficult than \eqref{ssc} due to the more complicated mapping $F$. 
\subsubsection{Proper symplectic decomposition}\label{sec7.3.3}

We apply the three methods to solve problem \eqref{reg-psdprob} with synthetic data $A$. The matrix $A$ is generated randomly in the following two ways: (I) to generate $A'=\textrm{randn}(2m,2n)$ and then set $A=A'/\|A'\|_F$; (II) to follow the same way as in  \cite{Jensen2024} to generate $A$. For all the tests of this subsection, the three methods start from the same point $x^0\in\mathcal{M}$ generated by using the same way as in \cite{Jensen2024}. Table \ref{SDec-tab} reports the average results of three methods for solving problem \eqref{reg-psdprob} with $\lambda=0$ in  the $\textbf{10}$ trials, under the stopping condition \eqref{stop-cond} with $\epsilon_{\rm RiVMPL}^*=10^{-7},\epsilon_{\rm RiALM}^*=5\times 10^{-5}$ and $\epsilon_{\rm RADMM}^*=10^{-7}$. We see that for the matrix $A$ of type I, RiVMPL yields much better objective values than RiALM and RADMM within much less running time; while for the matrix $A$ of type II, it returns a little worse objective values than RiALM within comparable running time. Comparing with the ``nsub'' and ``nSN'' rows of Table \ref{r21PCA-tab}, each iteration of Algorithm \ref{iRVM} solves more subproblems and needs more semismooth Newton steps for this class of problems. This attributes to the high nonlinearity of $F$. 
\begin{table*}[h]
	\centering
	\caption{\centering Numerical results of three methods for problem \eqref{reg-psdprob} with $(m,\lambda)=(50,0)$.}\label{SDec-tab}
	\scalebox{0.8}[0.80]{
		\begin{tabular}{l|l|lll|lll} 
			\hline    
			Type I &$(n,r)$& $(500,5)$& $(500,10)$& $(500,15)$& $(400,10)$& $(600,10)$ & $(700,10)$\\
			\hline
			\multirow{5}*{RiVMPL}
			&obj& \textbf{0.92395}& \textbf{0.84952}& \textbf{0.77522} &\textbf{0.84487} &\textbf{0.85323} & \textbf{0.85611}\\
			&time(s)& 40.35& 48.89& 72.10 &31.12 &73.27 & 164.23\\
            &iter&145.8&165.7&204.2&156.7&167.9&176.3\\
			&nsub& 1.56& 1.32& 1.23 &1.22 &1.46 & 1.54\\
			&nSN& 38.65& 42.25& 47.25 &43.25 &41.80 & 42.30\\
			\hline
			\multirow{3}*{RADMM}
			&obj& 0.92661& 0.85232& 0.77859&0.84762 &0.85628 & 0.85958\\
			&time(s)& 171.09& 233.77& 289.34 &161.13 &353.88 & 484.61\\
            &iter&25140.2&32147.5&37176.0&30891.1&33803.9&35108.3\\
			\hline
			\multirow{4}*{RiALM}
			&obj& 0.92406& 0.84967& 0.77539 &0.84493 &0.85337 & 0.85633\\
			&time(s)& 197.94& 239.47& 276.52 &148.34 &329.64 & 499.28\\
            &iter&26.6&27.0&27.0&26.3&26.5&27.0\\
            &iter(sub)&1122.6&1227.3&1309.3&1106.3&1187.7&1308.3\\
			\hline 
			\hline    
			Type II &$(n,r)$& $(500,5)$& $(500,10)$& $(500,15)$& $(400,10)$& $(600,10)$ & $(700,10)$\\
			\hline
			\multirow{5}*{RiVMPL}
			&obj& 0.76047& 0.54545& 0.33098 &0.54594 &0.54852 & 0.54736\\
			&time(s)& 4.67& 9.30& 22.73 &6.33 &13.38 & 39.82\\
            &iter&50.9&62.6&78.0&61.7&79.1&87.2\\
			&nsub& 1.64& 1.38& 1.31 &1.39 &1.37 & 1.39\\
			&nSN& 18.37& 33.27& 56.82 &32.69 &31.72 & 32.75\\
			\hline
			\multirow{3}*{RADMM}
			&obj& 0.76058& 0.54568& 0.33114 &0.54613 &0.54878 & 0.54762\\
			&time(s)& 34.94& 43.47& 50.81 &31.28 &58.51 & 97.28\\
            &iter&5111.1&5485.6&6549.4&6057.6&6104.2&6226.0\\
			\hline
			\multirow{4}*{RiALM}
			&obj& \textbf{0.76022}& \textbf{0.54494}& \textbf{0.33042} &\textbf{0.54542} &\textbf{0.54805} & \textbf{0.54693}\\
			&time(s)& 4.61& 7.33& 7.29 &4.97 &13.08 & 15.00\\
            &iter&20.5&21.0&21.1&21.1&21.7&21.2\\
            &iter(sub)&32.8&45.8&44.4&45.8&57.1&44.2\\
			\hline 
		\end{tabular}
	}
\end{table*}

Table \ref{sSDec-tab1} reports the average results of three methods for solving problem \eqref{reg-psdprob} with $\lambda=10^{-4}$ in the $\textbf{10}$ trials under the stop condition \eqref{stop-cond} with $\epsilon_{\rm RiVMPL}^*=10^{-6},\epsilon_{\rm RiALM}^*=10^{-4}$ and $\epsilon_{\rm RADMM}^*=10^{-7}$. The objective values by RiVMPL are better than those by RADMM and comparable with those by RiALM, and the sparsity by RiVMPL is best. The running time of RiVMPL is comparable with that of RADMM, and is at most a half and one-tenth that of RiALM for the data $A$ of type I and II.  
\begin{table*}[h]
	\centering
	\caption{\centering Numerical results of three methods for problem \eqref{reg-psdprob} with $(m,\lambda)=(50,10^{-4})$.}\label{sSDec-tab1}
	\scalebox{0.8}[0.80]{
		\begin{tabular}{l|l|lll|lll} 
			\hline    
			Type I &$(n,r)$& $(500,5)$& $(500,10)$& $(500,15)$& $(400,10)$& $(600,10)$ & $(700,10)$\\
			\hline
			\multirow{6}*{RiVMPL}
			&obj& \textbf{0.96006}& 0.92129& 0.88301 & 0.90940&0.93188 & \textbf{0.93987}\\
			&spar&0.3068 & 0.3220 &0.3291& 0.2870& 0.3573& 0.3804\\
			&time(s)& 71.79& 128.95& 186.40 &93.08 &144.82 & 278.53\\
            &iter&220.5&374.5&490.8&349.6&365.9&350.4\\
			&nsub& 1.00& 1.00& 1.00 &1.00 &1.00 & 1.00\\
			&nSN& 77.76& 72.72& 69.55 &74.49 &72.03 & 70.37\\
			\hline
			\multirow{4}*{RADMM}
			&obj& 0.96318& 0.92606& 0.88958 &0.91364 &0.93698 & 0.94536\\
			&spar&0.2799 & 0.2815 & 0.2822& 0.2445& 0.3217& 0.3515\\
			&time(s)& 114.53& 143.52& 165.17 &97.30 &189.64 &304.74\\
            &iter&16137.1&18120.4&19196.5&17638.6&18925.1&20556.6\\
			\hline
			\multirow{5}*{RiALM}
			&obj& \textbf{0.96006}& \textbf{0.92112}& \textbf{0.88224} &\textbf{0.90902} &\textbf{0.93183} & 0.93993\\
			&spar&0.3002 & 0.3046 & 0.3042& 0.2695& 0.3442& 0.3698\\
			&time(s)& 228.18& 357.18& 560.88 &271.60 &413.34 & 574.05\\
            &iter&7.3&9.9&14.2&11.0&9.0&8.2\\
            &iter(sub)&4643.3&4797.0&4887.2&4793.8&4787.1&4811.3\\
			\hline 
			\hline    
			Type II &$(n,r)$& $(500,5)$& $(500,10)$& $(500,15)$& $(400,10)$& $(600,10)$ & $(700,10)$\\
			\hline
			\multirow{6}*{RiVMPL}
			&obj& 0.76535& \textbf{0.55391}& 0.34172 &0.55459 &0.55688 & \textbf{0.55583}\\
			&spar&0.9078 & 0.8651 & 0.7572& 0.7814& 0.9208& 0.9494\\
			&time(s)& 13.83& 25.23& 45.67 &18.24 &33.27 & 60.42\\
            &iter&56.9&63.9&61.3&56.3&68.6&65.9\\
			&nsub& 1.46& 1.44& 1.33 &1.38 &1.47 & 1.42\\
			&nSN& 41.01& 68.01& 92.82 &66.60 &67.84 & 67.34\\
			\hline
			\multirow{4}*{RADMM}
			&obj& 0.76585& 0.55502& 0.34306 &0.55563 &0.55822 & 0.55710\\
			&spar&0.9067 & 0.8631 &  0.7550& 0.7803& 0.9179& 0.9473\\
			&time(s)& 30.39& 31.43& 39.35 &27.05 &39.41 & 61.24\\
            &iter&4049.9&3898.1&4566.0&4642.5&3743.8&3890.2\\
			\hline
			\multirow{5}*{RiALM}
			&obj& \textbf{0.76520}& 0.55423& \textbf{0.34087} &\textbf{0.55403} &\textbf{0.55644} & 0.55599\\
			&spar&0.9048 & 0.8637 & 0.7599& 0.7822& 0.9194& 0.9484\\
			&time(s)& 294.68& 372.32& 462.99 &263.47 &506.35 & 706.82\\
            &iter&10.7&10.7&12.2&11.3&11.4&10.8\\
            &iter(sub)&3867.4&4283.7&4614.4&4354.9&4372.6&4327.8\\
			\hline 
		\end{tabular}
	}
\end{table*}

To sum up, for the SSC problem, RiVMPL has better performance than RADMM and RiALM in terms of NMIs, and a little worse performance than RiALM in the objective value, sparsity and running time; for the constrained row sparse PCAs, RiVMPL are remarkably superior to RADMM and RiALM in the objective value and running time; while for the proper symplectic decomposition, RiVMPL is superior to RADMM and RiALM in terms of the sparsity and running time, and has the comparable objective values with RiALM, which are better than those  by RADMM.     
\section{Conclusion}\label{sec8}

For the composite problem \eqref{prob} with a $\mathcal{C}^2$-smooth embedded closed submanifold $\mathcal{M}$, we proposed an inexact variable metric proximal linearization method RiVMPL, and proved the full convergence of the iterate sequence under the boundedness assumption on the iterate sequence and the KL property of the potential function $\Xi_{\widetilde{c}}$. The idea of introducing a variable metric linear operator to tackle the nonlinearity of $F$ and using the inexactness criterion for the solving of subproblems is not new, but its full convergence analysis on the iterate sequence is nontrivial and needs to overcome new difficulty caused by the manifold constraint. As far as we know, this is the first algorithm with a full convergence certificate for \eqref{prob}. Moreover, under the boundedness assumption on the iterate sequence, we established the $O(\epsilon^{-2})$ iteration complexity and the $O(\epsilon^{-2})$ calls to the subproblem solver for returning an $\epsilon$-stationary point defined with a direct measure, and if the DFG in \cite{Necoara2016} to return the average primal iterate is used as an inner solver, the $O(\epsilon^{-3})$ oracle complexity bound was derived for the RiVMPL with $\mathcal{Q}_{k}$ specified as in Theorem \ref{oracle} to return an $\epsilon$-stationary point. We also provide a checkable condition for the potential function $\Xi_{\widetilde{c}}$ to satisfy the KL property with exponent $q\in[1/2,1)$. Numerical tests validated the efficiency of the RiVMPL armed with the dual semismooth Newton method. 

\backmatter





\bmhead{Acknowledgements}

This work was supported by the National Natural Science Foundation of China under project No.12371299.

\section*{Declarations}


\begin{itemize}
\item The authors declare that they have no conflict of interest to this work.
\item The data used for the test problems in Section \ref{sec7.3.1} is from \url{https://github.com/ishspsy/project/tree/master/MPSSC}, and the data used for the test problems in Section \ref{sec7.3.2}-\ref{sec7.3.3} are generated randomly. 
\end{itemize}







\begin{appendices}






\end{appendices}


\bibliography{sn-bibliography}

\end{document}